\newtheorem{theo}{Theorem}[subsection]
\newtheorem*{theo*}{Theorem}
\newtheorem{prop}[theo]{Proposition}
\newtheorem{lemm}[theo]{Lemma}
\newtheorem{coro}[theo]{Corollary}
\newtheorem{conj}[theo]{Conjecture}
\theoremstyle{remark}
\newtheorem{rema}[theo]{Remark}
\newcommand{\C}{\mathbb{C}}
\newcommand{\Z}{\mathbb{Z}}
\newcommand{\F}{\mathbb{F}}
\newcommand{\Q}{\mathbb{Q}}
\renewcommand{\O}{\mathcal{O}}
\newcommand{\eps}{\varepsilon}
\newcommand{\spec}{\operatorname{Spec}}
\newcommand{\Hom}{\operatorname{Hom}}
\newcommand{\ind}{\operatorname{ind}}
\newcommand{\Frob}{\operatorname{Frob}}
\newcommand{\Fil}{\operatorname{Fil}}
\renewcommand{\bar}[1]{\overline{#1}}
\newcommand{\GL}{\operatorname{GL}}
\newcommand{\Sym}{\operatorname{Sym}}
\newcommand{\Gal}{\operatorname{Gal}}
\newcommand{\mat}[4]{%
\begin{pmatrix} #1 & #2\\
#3 & #4 
\end{pmatrix}%
}
\newcommand{\smat}[4]{%
\left(\begin{smallmatrix} #1 & #2\\
#3 & #4 
\end{smallmatrix}\right)%
}
\newcommand{\T}{\mathbb{T}}
\newcommand{\f}{\bar{\F}}
\newcommand{\z}{\bar{\Z}_p}
\newcommand{\A}{\mathbb{A}}
\renewcommand{\aa}{\mathcal{A}}
\newcommand{\m}{\mathfrak{m}}
\renewcommand{\t}{\texttt{t}}
\newcommand{\G}{\mathcal{G}}
\newcommand{\R}{\mathcal{R}}
\newcommand{\JL}{\operatorname{JL}}
\newcommand{\LL}{\operatorname{LL}}
\newcommand{\WD}{\operatorname{WD}}
\newcommand{\Norm}{\operatorname{N}_D}
\newcommand{\WG}{W_{\Gamma}}
\newcommand{\triv}{\operatorname{triv}}
\newcommand{\scal}{(\text{scal})}
\newcommand{\red}{(\text{red})}
\newcommand{\irr}{(\text{irr})}
\newcommand{\car}{(\text{char})}
\newcommand{\unr}{\operatorname{unr}}
\newcommand{\Art}{\text{Art}}
\newcommand{\sigmaG}{\sigma_{\G}}
\newcommand{\rec}{\text{rec}_p}
\title[Discrete series extended types]{Potentially semi-stable deformation rings for discrete series extended types}
\date{\today}
\author{Sandra Rozensztajn}
\address{UMPA, \'ENS de Lyon\\
UMR 5669 du CNRS\\
46, all\'ee d'Italie\\
69364 Lyon Cedex 07\\
France}
\email{sandra.rozensztajn@ens-lyon.fr}
\subjclass{11F80,11F33}
\begin{document}

\maketitle

\begin{abstract}
We define deformation rings for potentially semi-stable deformations of
fixed discrete series extended type in dimension $2$.
In the case of representations of the Galois group of $\Q_p$, we prove an
analogue of the Breuil-Mézard conjecture for these rings. As an
application, we give some results on the existence of congruences modulo
$p$ for newforms in $S_k(\Gamma_0(p))$.  
\end{abstract}

\section{Introduction}
\label{intro}

Let $p > 2$ be a prime number, $K$ a finite extension of $\Q_p$ with
absolute Galois group $G_K$. Let $\bar{\rho}$ be a continuous
representation of $G_K$ of dimension $2$ with coefficients in some finite
field $\F$ of characteristic $p$. 
Let $E$ be a finite extension of $\Q_p$
with residue field containing $\F$. There exists an $\O_E$-algebra
$R^{\square}(\bar{\rho})$ parametrizing the framed deformations of
$\bar{\rho}$ to $\O_E$-algebras. Kisin (\cite{Kis08}) has shown that this
ring has quotients $R^{\square,\psi}(w,\t,\bar{\rho})$ that parametrize
framed deformations $\rho$ that are potentially semi-stable of given determinant
(encoded by $\psi$), fixed Hodge-Tate weights (encoded by the Hodge-Tate
type $w$) and fixed inertial type $\t$ (that is,
the restriction to inertia of the Weil-Deligne representation
$\WD(\rho)$ associated to $\rho$ is isomorphic to a fixed smooth
representation $\t$). We are interested in a variant of this situation:
instead of considering deformations with a fixed inertial type $\t$,
consider deformations with a fixed extended type $\t'$, that is
such that the restriction to the Weil group of $\WD(\rho)$ is isomorphic
to $\t'$, in the case when $\t'$ is a discrete series type (see the
definition in Paragraph \ref{exttypesWD}). This problem was first
considered in \cite{BCDT}, in order to isolate some irreducible
components of the deformation space parametrizing deformations with fixed inertial type.  
For a discrete series inertial type $\t$, we show that the ring
$R^{\square,\psi}(w,\t',\bar{\rho})$ parametrizing deformations with
fixed discrete series extended type $\t'$ extending $\t$ is the maximal
reduced quotient of $R^{\square,\psi}(w,\t,\bar{\rho})$ supported in some
set of irreducible components of $\spec
R^{\square,\psi}(w,\t,\bar{\rho})$.  More precisely, depending on $\t$,
adding the extra data of a $\t'$ either does not give any additional
information, or divides the set of irreducible components in two parts.

Some important information about the geometry of the rings 
$R^{\square,\psi}(w,\t,\bar{\rho})$ is given by the Breuil-Mézard
conjecture (\cite{BM}, proved for $K=\Q_p$ by Kisin \cite{Kis09a} and
Pa\v{s}k\=unas \cite{Pas}) that relates the Hilbert-Samuel multiplicity
of the special fiber of the ring to an automorphic multiplicity, computed
in terms of smooth representations modulo $p$ of $\GL_2(\O_K)$ attached to $w$ and $\t$.
Our main result is that when $K=\Q_p$ there is a
similar formula for the Hilbert-Samuel multiplicity of the special fiber
of $R^{\square,\psi}(w,\t',\bar{\rho})$ for a discrete series extended
type $\t'$. More precisely, Gee and Geraghty have shown in \cite{GG} that
for discrete series inertial types $\t$, the Breuil-Mézard conjecture can
be reformulated using an automorphic multiplicity expressed in terms of
representations not of $\GL_2(\O_K)$, but of $\O_D^{\times}$, where
$\O_D$ is the ring of integers of the non-split quaternion algebra $D$
over $K$. The formula we give for the multiplicity of the special fiber
of $R^{\square,\psi}(w,\t',\bar{\rho})$ is in terms of representations of
a quotient $\G$ of $D^{\times}$ containing $\O_D^{\times}$ as a subgroup
of index $2$. Using the local Langlands correspondence and the
Jacquet-Langlands correspondence, we construct for each discrete series
inertial type $\t$ a smooth representation $\sigmaG(\t)$ of $\G$ with coefficients in
$\bar{\Q}_p$ (or a pair of such representations, depending on the
inertial type $\t$). To a Hodge-Tate type $w$ we attach a representation
$\sigma_w$ of $\G$ coming from an algebraic representation of $\GL_2$
with highest weight given by $w$. The Hilbert-Samuel multiplicity is then
given in terms of the multiplicity of the irreducible constituents of the
reduction modulo $p$ of $\sigmaG(\t)\otimes\sigma_w$, seen as
representations of a finite group $\Gamma$ through which all semi-simple
representations modulo $p$ of $\G$ factor.
For $K=\Q_p$ we have the following theorem (see Theorem \ref{mainQp} for
a more precise statement):

\begin{theo*}
Let $\bar{\rho}$ be a continuous representation of $G_{\Q_p}$ of dimension $2$
with coefficients in $\f_p$.
There exists a positive linear form $\mu_{\bar{\rho}}$ on the
Grothendieck ring of representations of $\Gamma$
with values in $\Z$ satisfying the following property:
for any discrete series inertial type $\t$,
Hodge-Tate type $w$, character $\psi$ lifting
$\omega^{-1}\det\bar{\rho}$, and extended type
$\t'$ compatible with $(\t,\psi)$,
there exists a choice of representation $\sigmaG(\t)$ of $\G$
such that we have:
\[
e(R^{\square,\psi}(w,\t',\bar{\rho})/\pi) =
\mu_{\bar{\rho}}([\bar{\sigmaG}(\t)\otimes\bar{\sigma_w}])
\]
\end{theo*}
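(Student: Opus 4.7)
The plan is to leverage Gee--Geraghty's reformulation of the Breuil--Mézard conjecture over $\Q_p$, which for a discrete series inertial type $\t$ expresses $e(R^{\square,\psi}(w,\t,\bar{\rho})/\pi)$ as a positive integral linear form applied to the reduction modulo $p$ of $\sigma(\t)\otimes\sigma_w$, where $\sigma(\t)$ is a representation of $\O_D^{\times}$ attached to $\t$ and the linear form is defined on representations of a finite quotient $\Gamma_{\O_D}$ of $\O_D^{\times}$. The form $\mu_{\bar{\rho}}$ in our theorem will be derived from the Gee--Geraghty form by distributing its value suitably over the (one or two) $\Gamma$-irreducibles that restrict to a given $\Gamma_{\O_D}$-irreducible; positivity and integrality are then inherited directly.

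The first step is to use the structural result recalled in the introduction: for each discrete series inertial type $\t$, $R^{\square,\psi}(w,\t',\bar{\rho})$ is the maximal reduced quotient of $R^{\square,\psi}(w,\t,\bar{\rho})$ supported on a specified union of irreducible components of $\spec R^{\square,\psi}(w,\t,\bar{\rho})$. Since potentially semi-stable deformation rings are $\O_E$-flat with reduced generic fiber by Kisin's theory, the Hilbert-Samuel multiplicity of the special fiber is additive across a disjoint union of components, giving
\[
e(R^{\square,\psi}(w,\t,\bar{\rho})/\pi) = \sum_{\t'} e(R^{\square,\psi}(w,\t',\bar{\rho})/\pi),
\]
where the sum has one or two terms depending on whether $\t$ admits one or two extended types compatible with $\psi$.

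Next I would construct $\sigmaG(\t)$ by combining the local Langlands correspondence applied to $\t'$ (viewed as a Weil group representation) with the Jacquet--Langlands transfer to $D^{\times}$, and then descending to the quotient $\G$. Since $[\G : \O_D^{\times}] = 2$, a representation of $\O_D^{\times}$ either extends to $\G$ in exactly two ways, differing by twist by the nontrivial character of $\G/\O_D^{\times}$, or is induced; the resulting $\sigmaG(\t)$ should restrict to $\sigma(\t)$ (up to a doubling in the induced case), and the two admissible extensions of $\sigma(\t)$ should correspond bijectively to the two extended types above $\t$ when two exist.

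The main technical obstacle is to show that the geometric decomposition of components of $\spec R^{\square,\psi}(w,\t,\bar{\rho})$ by extended type matches, under the local-global dictionary of Kisin and Gee--Geraghty, the automorphic decomposition of $\sigmaG(\t)\otimes\sigma_w$ according to the $\G/\O_D^{\times}$-character. I would proceed by analyzing characteristic zero points: a closed point of a component on the Galois side determines an extended type via its Weil--Deligne representation, while on the automorphic side local Langlands plus Jacquet--Langlands identifies the same datum with a specific representation of $\G$ whose restriction to $\O_D^{\times}$ is $\sigma(\t)$. The compatibility of these two assignments — i.e.\ that the Galois component decomposition and the $\G/\O_D^{\times}$-twist decomposition agree — should be checked by a case-by-case analysis using the classification of discrete series inertial types for $\Q_p$. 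Once this matching is established, summing the Gee--Geraghty identity over components in each extended-type class and unwinding the definition of $\mu_{\bar{\rho}}$ yields the desired formula for $e(R^{\square,\psi}(w,\t',\bar{\rho})/\pi)$.
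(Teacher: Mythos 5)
There is a genuine gap, and it sits exactly at the step you call the ``main technical obstacle.'' The Gee--Geraghty identity computes only the \emph{total} multiplicity $e(R^{\square,\psi}(w,\t^{ds},\bar{\rho})/\pi)$ as $i_{D,\bar{\rho}}$ applied to $[\bar{\sigma_D}(\t)\otimes\bar{\sigma_w}]$; it does not attach an automorphic quantity to individual irreducible components, so there is no ``Gee--Geraghty identity over components'' to sum over the components of a given extended type. Your additivity (which, as a side remark, must be stated for $R^{\square,\psi}(w,\t^{ds},\bar{\rho})$ rather than $R^{\square,\psi}(w,\t,\bar{\rho})$ — for scalar $\t$ the potentially crystalline components are discarded, so the displayed equality is false as written) together with the Gee--Geraghty formula only determines the \emph{sum} of the two extended-type multiplicities, i.e.\ $\mu_{\bar{\rho}}([\gamma])+\mu_{\bar{\rho}}([\xi\gamma]) = i_{\bar{\rho}}([\gamma])$; it can never isolate the individual terms. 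Nor can any purely local, case-by-case analysis of characteristic-zero points do so: the correct ``distribution'' of $i_{\bar{\rho}}$ over the two $\Gamma$-irreducibles above a given $\ell^{\times}$-irreducible genuinely depends on $\bar{\rho}$ and is in general asymmetric (for $\bar{\rho}$ très ramifié one has $\mu_{\bar{\rho}}(\xi^{n}\delta_n)=1$ and $\mu_{\bar{\rho}}(\xi^{n+1}\delta_n)=0$, and the two conjugate extended-type rings have multiplicities differing by $1$), so no formal or symmetric splitting rule is available, and the consistency of one fixed splitting simultaneously for all $(w,\t,\t^{\pm})$ is precisely the content of the theorem.

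What is missing is the global input that the paper uses to produce and verify this splitting. The paper defines $\mu_{\bar{\rho}}([\gamma])$ directly as the multiplicity $e(R^{\square,\psi_{\gamma}}(w_0,\t_{\gamma}^{+},\bar{\rho})/\pi)$ of an extended-type deformation ring at Hodge--Tate weights $(0,1)$, normalized by a fixed square root $\alpha$ of $\bar{\psi}(\varpi_K)^{-1}$, and then proves the formula for arbitrary $(w,\t,\t^{+})$ by realizing $\bar{\rho}$ globally and running Taylor--Wiles--Kisin patching for quaternionic modular forms with one extra feature: the Hecke operator $W_{v_0}$ built from $\varpi_D$ acting through $\G$ is patched along with the modules, giving $M_{\infty}=M_{\infty}^{+}\oplus M_{\infty}^{-}$. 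Lemma \ref{eigenvalueexttype} identifies the $W_{v_0}$-eigenvalue of an eigenform with the extended type of the attached Galois representation, so the action of $R_{\infty}$ on $M_{\infty}^{\pm}$ factors through $R_{\infty}^{\pm}$; then Proposition \ref{equality} (whose proof itself uses the Gee--Geraghty multiplicity statements plus a support argument) combined with $e(M_{\infty}^{\pm}/\pi,R_{\infty}^{\pm}/\pi)\leq e(R_{\infty}^{\pm}/\pi)$ and $e(R_{\infty}^{+}/\pi)+e(R_{\infty}^{-}/\pi)=d_{\t}\,e(R_{\infty}/\pi)$ forces equality separately on the $+$ and $-$ parts, which is what yields the refined identity. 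Your proposal replaces this patching-with-$W_{v_0}$ argument by an asserted ``matching of decompositions'' checked locally, and that assertion is exactly what cannot be established without the global construction.
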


We deduce our result from the reformulation by \cite{GG} of
the usual Breuil-Mézard conjecture, making use of modularity lifting
theorems for modular forms on a quaternion algebra ramified at infinity
and at primes dividing $p$.

One consequence of this formula is Corollary \ref{samemult}: except when
$\bar\rho$ has some very specific form, then when the addition of the
data of the extended types divides the deformation ring in two parts,
these parts have the same multiplicity.
This is to be
expected when $\bar\rho$ is irreducible, as in this case it can easily
be seen that the deformation rings corresponding to the two extended
types are in fact isomorphic (see Remark \ref{involution}). But this is
much more surprising when $\bar\rho$ is reducible, as in this case there
does not seem to be a natural way to relate the deformation rings corresponding
to the two extended types.

We give a concrete application of our result to the existence of
congruences modulo $p$ for some modular forms. When $\t$ is trivial, the ring
$R^{\square,\psi}(w,\t,\bar{\rho})$ classifies semi-stable
representations, and the extra data given by the extended type is the
eigenvalues of the Frobenius of the associated filtered $(\phi,N)$-module
when the representation is not crystalline
(there are only two possibilities for these eigenvalues if the
determinant is fixed). 
If $f \in S_k(\Gamma_0(p))$ is a newform, this
means that the extended type of ${\rho_{f,p}}|_{G_{\Q_p}}$ gives the value of
the coefficient $a_p(f) = \pm p^{k/2-1}$. We give in Theorem \ref{congruences} a
criterion for the existence of a newform in $S_k(\Gamma_0(p))$ that is
congruent to $f$ modulo $p$ but with the opposite value for $a_p$.

I would like to thank Christophe Breuil and Vincent Pilloni for useful
conversations. I would also like to thank the referees for their useful
comments and particularly for suggesting a better proof of Proposition
\ref{defextended}.

\subsection{Plan of the article}

We define the deformation rings $R^{\square,\psi}(w,\t',\bar{\rho})$ for
discrete series extended types in Section \ref{galois}. In Section
\ref{automorphic} we introduce the groups and representations that play a
role in the automorphic side for the formula for the Hilbert-Samuel
multiplicity of the special fiber of the rings
$R^{\square,\psi}(w,\t',\bar{\rho})$ and state our main theorems. We give
in Section \ref{quatmodforms} some results about modular forms for
quaternion algebras ramified at infinity and at primes dividing $p$ that
we need in Section \ref{proof}, where we prove the theorems. Section
\ref{appli} is devoted to the application to modular forms.

\subsection{Notation}
\label{notation}

We fix a prime number $p > 2$.
We denote by $K$ a finite extension of $\Q_p$, and by $q$ the cardinality
of its residue field. Let $G_K$ be the absolute Galois group of $K$,
$I_K$ its inertia subgroup and $W_K$ its Weil group. 
We denote by $\eps$ the $p$-adic cyclotomic
character and $\omega$ its reduction modulo $p$.
We normalize the Artin map of local class field theory $\Art_K : K^{\times} \to
W_K^{ab}$ so that geometric Frobenius elements correspond to
uniformizers.
We denote by $\unr(a)$ the unramified character of $W_K$ (or $G_K$)
sending a geometric Frobenius to $a$, and also the unramified character
of $K^{\times}$ sending a uniformizer to $a$. We denote by $\|\cdot\|$ the
norm on $W_K$, that is the character $\unr(q^{-1})$.
\section{Discrete series extended types and deformation rings}
\label{galois}

\subsection{Extended types and Weil-Deligne representations}
\label{exttypesWD}

An inertial type is a smooth representation $\t$ of $I_K$ over $\bar\Q_p$ 
that extends to
a representation of $W_K$. We define an extended type to be a smooth
representation of $W_K$ over $\bar\Q_p$.

We recall the following well-known classification for inertial types and
extended types in dimension $2$ when $p>2$ (see for example
\cite[Lemma 2.1]{Imai} for a proof):
\begin{lemm}
\label{classtypes}
Let $\t$ be an extended type of degree $2$. Then we are in exactly one of
the following situations:
\begin{description}
\item[$\scal$] 
$\t|_{I_K}$ is scalar: there exist two smooth characters $\chi,\chi'$ of $W_K$ such that
$\chi|_{I_K} = \chi'|_{I_K}$ and $\t = \chi \oplus \chi'$.

\item[$\car$]
There exist two smooth characters $\chi_1$, $\chi_2$ of $W_K$ with distinct restrictions to 
$I_K$ such that $\t = {\chi_1}\oplus {\chi_2}$.

\item[$\red$]
Let $K'$ be the unramified quadratic extension of $K$.
There exists a smooth character $\chi$ of $W_{K'}$
that does not extend to a character of $W_K$ such that
such that $\t = \ind_{W_{K'}}^{W_K}\chi$. In this case $\t|_{I_K}$ is
reducible and is a sum of characters that do not extend to $W_K$.

\item[$\irr$]
There exist a ramified quadratic extension $L$ of $K$ and a smooth character
$\chi$ of $W_L$ that does not extend to a character of $W_K$ such that
$\t = \ind_{W_L}^{W_K}\chi$. In this case $\t|_{I_K}$ is irreducible.
\end{description}
\end{lemm}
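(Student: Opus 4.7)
The plan is to classify $\t$ according to the structure of $\t|_{I_K}$. Since $I_K$ is profinite and $\t$ is smooth on a finite-dimensional $\bar\Q_p$-vector space, $\t|_{I_K}$ has open kernel in $I_K$ and hence factors through a finite quotient; in particular it is semisimple. There are three mutually exclusive possibilities: $\t|_{I_K}$ is scalar (case $\scal$), a sum of two distinct characters (cases $\car$ or $\red$), or absolutely irreducible (case $\irr$).

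In the scalar case $\t|_{I_K} = \chi_0 \oplus \chi_0$, we use that $\t$ is itself semisimple (as is standard for Weil--Deligne types): the Frobenius action commutes with the scalar $I_K$-action and can be diagonalized, producing $\t = \chi \oplus \chi'$ with $\chi|_{I_K} = \chi'|_{I_K} = \chi_0$. If $\t|_{I_K}$ is a sum of two distinct characters $\chi_1, \chi_2$, the corresponding eigenspaces are canonically defined and $W_K$ permutes them. When the permutation is trivial, each $\chi_i$ extends to a character of $W_K$ and $\t$ splits, yielding case $\car$. When it is nontrivial, the stabilizer of either eigenspace in $W_K$ is an open index-$2$ subgroup containing $I_K$ (because $I_K$ itself fixes the eigenspaces, the $\chi_i$ being distinct), so it equals $W_{K'}$ for the unramified quadratic extension $K'/K$; then $\t = \ind_{W_{K'}}^{W_K}\chi$ for some character $\chi$ of $W_{K'}$ that cannot extend to $W_K$ (otherwise $\t$ would be reducible), yielding case $\red$.

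The heart of the proof is the case when $\t|_{I_K}$ is irreducible. Consider the wild inertia $P_K \subset I_K$, a normal pro-$p$ subgroup of $W_K$. The image of $P_K$ in $\GL_2(\bar\Q_p)$ is a finite $p$-group, hence so is its image in $\operatorname{PGL}_2(\bar\Q_p)$. For $p > 2$ every finite $p$-subgroup of $\operatorname{PGL}_2(\bar\Q_p)$ is cyclic, so the image of $P_K$ in $\GL_2(\bar\Q_p)$ is abelian and $\t|_{P_K}$ decomposes as a sum of two characters. These must be distinct, for otherwise $\t|_{I_K}$ would factor through the abelian quotient $I_K/P_K$ and could not be irreducible. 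The stabilizer in $W_K$ of either $P_K$-eigenspace is then an open index-$2$ subgroup $W_L$ for some quadratic extension $L/K$, and $\t = \ind_{W_L}^{W_K}\chi$. Irreducibility of $\t|_{I_K}$ prevents $I_K$ from stabilizing the decomposition, forcing $I_L \subsetneq I_K$ and hence $L/K$ ramified: case $\irr$. The main obstacle is the appeal to the classification of finite $p$-subgroups of $\operatorname{PGL}_2(\bar\Q_p)$ for $p > 2$; this is precisely where the hypothesis $p > 2$ enters, as for $p = 2$ one must also allow exceptional primitive types coming from the quaternion or octahedral groups.
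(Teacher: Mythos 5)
The paper does not prove Lemma \ref{classtypes} at all: it simply refers to \cite[Lemma 2.1]{Imai}, so there is no internal argument to compare yours with. Your proof is the standard one (case analysis on the semisimple representation $\t|_{I_K}$, and, in the irreducible case, the fact that for $p>2$ the image of wild inertia is forced to be abelian, so the representation is induced from a quadratic extension), and its skeleton is sound; the hypothesis $p>2$ is indeed used exactly where you say. Two points, however, need attention. First, in the case where $\t|_{I_K}$ is scalar you invoke semisimplicity of $\t$ as a $W_K$-representation. With the paper's definition (an extended type is an arbitrary smooth representation of $W_K$) this is not automatic: the representation that is trivial on $I_K$ and sends a Frobenius to $\smat{1}{1}{0}{1}$ is smooth, has scalar restriction to inertia, and is not a direct sum of characters, so it fits none of the four cases. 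The statement therefore tacitly assumes (Frobenius-)semisimplicity, and your argument should say explicitly that this is the only place where that convention is used; in the other branches no such assumption is needed, since the two distinct $I_K$-isotypic lines already produce either the splitting or the induction.

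Second, in the irreducible branch the justification ``otherwise $\t|_{I_K}$ would factor through the abelian quotient $I_K/P_K$'' is not literally correct when $\t|_{P_K}$ is a nontrivial scalar character: the representation itself does not factor through $I_K/P_K$, only its projectivization does. The correct (and equally short) argument is the one you already use for $P_K$: the image of $I_K$ in $\operatorname{PGL}_2(\bar{\Q}_p)$ is then a finite quotient of the tame quotient $I_K/P_K$, which is topologically cyclic, hence that image is cyclic, hence $\t(I_K)$ is abelian, contradicting irreducibility. Note that abelianness of the projective image alone would not suffice (the quaternion group in $\GL_2$ has projective image $(\Z/2)^2$), so cyclicity, not just commutativity, is the point -- which is also why $p=2$ escapes. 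Finally, you leave out the one-line verifications that are part of the statement: in both induced cases the inducing character $\chi$ does not extend to $W_K$, since otherwise $\t$ would contain a $W_K$-stable, hence $I_K$-stable, line, impossible when the two inertial eigenlines are permuted nontrivially (case $\red$) or when $\t|_{I_K}$ is irreducible (case $\irr$); and in case $\red$ the two characters of $I_K$ occurring in $\t|_{I_K}$ do not extend to $W_K$, because they are distinct and swapped by Frobenius conjugation. These are easy, but they should be recorded.
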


We call the inertial types corresponding to situation $\scal$, $\red$ or $\irr$
discrete series inertial types. We call the extended types corresponding
to situation $\red$ or $\irr$, or to situation $\scal$ with $\chi' =
\chi\otimes\|\cdot\|^{\pm 1}$ discrete series extended types.

The following Proposition is an immediate consequence of the
classification:

\begin{prop}
\label{two}
Let $\t_1$ and $\t_2$ be two discrete series extended types with isomorphic
restrictions to $I_K$. Then they differ by a twist by an unramified
character.

Let $\t$ be a discrete series extended type. If it 
is of the form $\scal$ or $\irr$ then $\t$ is not isomorphic to
$\t\otimes\unr(-1)$. If $\t$ is of the form $\red$ then $\t$ is
isomorphic to $\t\otimes\unr(-1)$.
\end{prop}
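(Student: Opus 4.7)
The plan is to invoke the classification of Lemma~\ref{classtypes} and treat each of the three cases separately, after first observing that the isomorphism class of $\t|_{I_K}$ detects the case (scalar, reducible non-scalar, or irreducible), so that two discrete series extended types with isomorphic inertial restrictions automatically lie in the same case.

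For the first assertion, in case $\scal$ I write both types as $\chi_i \oplus \chi_i\|\cdot\|$ (one may absorb the sign $\pm 1$ by swapping the summands and renaming $\chi_i$); matching inertial restrictions forces $\chi_2/\chi_1$ to be unramified on $W_K$, and this is the required twist. In case $\red$, writing $\t_i = \ind_{W_{K'}}^{W_K}\chi_i$, Mackey gives $\t_i|_{I_K} = \chi_i|_{I_K} \oplus \chi_i^\sigma|_{I_K}$; possibly after replacing $\chi_2$ by $\chi_2^\sigma$ (which leaves the induction unchanged), $\chi_2/\chi_1$ is unramified on $W_{K'}$. Since $K'/K$ is unramified the restriction map $\unr_K(a) \mapsto \unr_{K'}(a^2)$ hits every unramified character of $W_{K'}$ (as $\q^\times$ is divisible), so I lift to $\mu$ unramified on $W_K$; the projection formula $\ind(\chi_1 \cdot \mu|_{W_{K'}}) = \t_1 \otimes \mu$ concludes. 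Case $\irr$ is analogous with a ramified quadratic $L/K$, and here the identification is even more direct: $k_L = k_K$ gives $\unr_K(a)|_{W_L} = \unr_L(a)$, so every unramified character of $W_L$ already lifts to $W_K$.

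For the second assertion, I compute $\t \otimes \unr(-1)$ case by case. In $\scal$, $\t \otimes \unr(-1) = \chi\unr(-1) \oplus \chi\|\cdot\|\unr(-1)$; matching this multiset to $\{\chi, \chi\|\cdot\|\}$ leads to either $\unr(-1) = 1$ or $\|\cdot\| = \unr(-1)$ (i.e.\ $q^{-1} = -1$), both impossible. In $\red$, $\unr(-1)|_{W_{K'}} = \unr_{K'}((-1)^2)$ is trivial, so $\t \otimes \unr(-1) = \ind(\chi) = \t$ at once. In $\irr$, $\t \otimes \unr(-1) = \ind(\chi \unr_L(-1))$, and this can equal $\ind \chi$ only if $\chi^\sigma = \chi \unr_L(-1)$; restricting to $I_L$ gives $\chi^\sigma|_{I_L} = \chi|_{I_L}$ (since $\unr_L(-1)$ is trivial on $I_L$), contradicting the irreducibility of $\t|_{I_K} = \ind_{I_L}^{I_K}\chi|_{I_L}$.

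The main obstacle is thus the $\irr$ case of the second assertion: unlike in the $\red$ case, the restriction $\unr(-1)|_{W_L}$ does not become trivial, so the non-isomorphism is not automatic; the saving input is the finer observation that irreducibility of the inertial type forces $\chi|_{I_L} \neq \chi^\sigma|_{I_L}$, which rules out the would-be twist already on $I_L$.
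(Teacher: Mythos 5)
Your proof is correct, and it fills in a case-by-case verification that the paper declares ``an immediate consequence of the classification'' without further argument. The three ingredients you isolate are exactly the right ones: (i) in each case the inertial restriction determines the relevant character up to unramified twist and the $\sigma$-conjugacy ambiguity inherent in induction; (ii) unramified characters of $W_{K'}$ (resp.\ $W_L$) are restrictions of unramified characters of $W_K$, using divisibility of $\bar{\Q}_p^\times$ in the unramified case and the equality of residue fields in the ramified case, so the projection formula transports the unramified twist through the induction; and (iii) in the $\irr$ case the irreducibility of $\t|_{I_K}$ is equivalent to $\chi|_{I_L}\neq\chi^\sigma|_{I_L}$, which is precisely what blocks the identity $\ind\chi\cong\ind(\chi\unr_L(-1))$, while in the $\red$ case $\unr_K(-1)$ restricts trivially to $W_{K'}$ and the isomorphism is automatic. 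I would only add one small remark for completeness: your opening claim that the case (scalar, $\red$, or $\irr$) is read off from $\t|_{I_K}$ alone relies on the mutual exclusivity asserted in Lemma~\ref{classtypes}, in particular on the fact that in the $\red$ case the two inertial characters are distinct (otherwise $\t|_{I_K}$ would be scalar and $\t$ would be forced to be reducible over $W_K$, contradicting the classification); it is worth flagging that this is exactly where the Lemma's exclusivity is being used.
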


Let $\t$ be a discrete series extended type. We call conjugate type of $\t$ the type
$\t\otimes\unr(-1)$. Two types with isomorphic restriction to
$I_K$ are conjugate if and only if they have the same determinant.
When $\t$ is of the form $\scal$ or $\irr$, two conjugate extended types
are distinct, but they are isomorphic when $\t$ is of the form $\red$.

Let $(r,N)$ be a Weil-Deligne representation of dimension $2$, that is a
two-dimensional smooth representation $r$ of the Weil group $W_K$ and a
nilpotent endomorphism $N$ such that $Nr(x) = \|x\|^{-1}r(x)N$ for any
$x\in W_K$.
Let $\t$ be an inertial type; we say that $(r,N)$ is of inertial type
$\t$ if $r|_{I_K}$ is isomorphic to $\t$. Let $\t'$ be an extended type;
we say that $(r,N)$ is of extended type $\t'$ if $r$ is isomorphic to
$\t'$.

We say that $(r,N)$ is a discrete series Weil-Deligne representation if
either $r|_{I_K}$ is of the form $\scal$ and $N \neq 0$ or $r|_{I_K}$ is
of the form $\red$ or $\irr$ (note that we can have $N \neq 0$ only when
$r|_{I_K}$ is of the form $\scal$ and $r$ is a twist of
$1\oplus\|\cdot\|$). With this definition, discrete series inertial
(resp. extended) types are exactly the restriction to $I_K$ (resp. $W_K$)
of discrete series Weil-Deligne representations. See Paragraph
\ref{lljl} for a justification of this terminology.

\subsection{Potentially semi-stable representations and discrete series extended
types}

\subsubsection{Filtered $(\phi,N)$-modules with descent data}

Let $F$ be a finite extension of $\Q_p$, $F_0$ the maximal unramified
extension of $\Q_p$ contained in $F$. Let $E$ be a finite extension of
$\Q_p$ (the coefficient field), that we suppose large
enough.

A filtered $(\phi,N,F,E)$-module is a free $F_0\otimes_{\Q_p}E$-module
$D$ of finite rank, endowed with a $F_0$-semi-linear, $E$-linear 
endomorphism $\phi$ and a $F_0\otimes E$ linear endomorphism $N$ 
satisfying the commutation relation 
$N\phi = p\phi N$, with $N$ nilpotent, $\phi$ an automorphism, and a
decreasing filtration of $F\otimes_{F_0}D$ by $F\otimes_{\Q_p}E$-submodules
$\Fil^i(F\otimes_{F_0}D)$ such that $\Fil^i(F\otimes_{F_0}D) =
F\otimes_{F_0}D$ when $i$ is small enough and $\Fil^i(F\otimes_{F_0}D) =
0$ when $i$ is large enough.
We can define an admissibility condition for filtered
$(\phi,N,F,E)$-modules, we refer to \cite{Fonb} for the definition.

Let $\rho : G_F \to \GL(V)$ be a continuous representation, where $V$ is a
finite-dimensional $E$-vector space. If $\rho$ is semi-stable, we can
attach to it an admissible filtered $(\phi,N,F,E)$-module by taking
$D_{st}(V) = (B_{st}\otimes_{\Q_p}V)^{G_F}$. The functor
$V \mapsto D_{st}(V)$ gives an equivalence of
categories between the category of semi-stable representations of $G_F$
and the category of admissible filtered $(\phi,N,F,E)$-modules which
preserves dimension, and the Hodge-Tate weights of $\rho$ are the indices
$i$ with $\Fil^{-i}(F\otimes_{F_0}D) \neq \Fil^{-i+1}(F\otimes_{F_0}D)$
(so that $\eps$ has its Hodge-Tate weights equal to $1$).

Suppose now that we have $\rho : G_K \to \GL(V)$ a continuous
representation such that $\rho$ becomes semi-stable on a finite Galois
extension $F$ of $K$. Then we can attach to it an admissible filtered
$(\phi,N,F/K,E)$-module, that is an admissible filtered
$(\phi,N,F,E)$-module with descent data given by an action of $\Gal(F/K)$ which is
$F_0$-semi-linear and $E$-linear and commutes with $\phi$ and $N$.
The filtered
$(\phi,N,F,E)$-module is $D^F_{st}(V)$, that is $D_{st}(V|_{G_F})$.
This gives an equivalence of categories between the category of
representations of $G_K$ that become semi-stable over $F$ and the
category of admissible filtered $(\phi,N,F/K,E)$-modules.

\subsubsection{Weil-Deligne representation attached to a Galois
representation}

Let $\rho : G_K \to \GL(V)$ be a continuous representation of $G_K$,
where $V$ is a finite-dimensional vector space over a finite extension
$E$ of $\Q_p$. If $\rho$ is potentially semi-stable, 
we attach to its filtered $(\phi,N,F/K,E)$-module
a Weil-Deligne representation $WD(\rho)$ as in \cite{Fona} (see also
Appendix B. of \cite{CDT} for more detailed explanations of the
construction and its properties). It does not depend on the field $F$ over which
$K$ becomes semi-stable, and moreover it does not depend on the filtration
but only on $\phi$, $N$ and the action of $\Gal(F/K)$.

We say that $\rho$ is of inertial type $\t$ if $\WD(\rho)$ is of type
$\t$, and of extended type $\t'$ if $\WD(\rho)$ is. Note that
$\WD(\rho)$ is of scalar inertial type if and only if $\rho$ is
semi-stable up to twist, and in this case $N\neq 0$ if and only if $\rho$ is
semi-stable but not crystalline up to twist.

\subsection{Deformation rings}
\label{defring}

In this section we fix a discrete series inertial type $\t$. Note that
the notions of this Paragraph will be interesting only when $\t$ is of the form $\scal$ or
$\irr$ as we explain later.

Let $\bar{\rho}$ be a continuous representation of $G_K$ of dimension $2$
over a finite field $\F$ of characteristic $p$. Let $E$ be a finite
extension of $\Q_p$ with residue field containing $\F$.
We denote by $R^{\square}(\bar{\rho})$ the universal framed deformation
$\O_E$-algebra of $\bar{\rho}$.

\subsubsection{Deformation rings of fixed inertial type}

Let $w = (n_{\tau},m_{\tau}) \in (\Z_{\geq 0} \times
\Z)^{\Hom(K,\bar{\Q}_p)}$ be a Hodge-Tate type, $\t$ be an
inertial type, and $\psi$ a character of $G_K$. We are interested in
lifts $\rho$ of $\bar{\rho}$ that are potentially semi-stable, with
determinant $\psi\eps$, Hodge-Tate weights
$(m_{\tau},m_{\tau}+n_{\tau}+1)_{\tau}$ (we then say that $\rho$ has
Hodge-Tate type $w$), and inertial type $\t$.

In \cite{Kis08}, Kisin
shows that, after possibly enlarging $E$, there exists
a quotient $R^{\square,\psi}(w,\t,\bar{\rho})$ of $R^{\square}(\bar{\rho})$ that
has the following properties:
\begin{theo}
\label{definertial}
\begin{enumerate}
\item
$R^{\square,\psi}(w,\t,\bar{\rho})$ is
$p$-torsion free, $R^{\square,\psi}(w,\t,\bar{\rho})[1/p]$ is reduced
and equidimensional. 

\item
for any finite extension $E'/E$, a map $x : R^{\square}(\bar{\rho}) \to
E'$ factors through $R^{\square,\psi}(w,\t,\bar{\rho})$ if and only if
the representation $\rho_x$ is of determinant $\psi\eps$,
potentially semi-stable of Hodge-Tate type
$w$, and of inertial type
$\t$.
\end{enumerate}
\end{theo}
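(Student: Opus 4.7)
This is a citation of Kisin's theorem in \cite{Kis08}, so the plan is to outline his strategy.

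The first step is to identify, purely on the generic fiber, the subset of $\spec R^{\square}(\bar{\rho})[1/p]$ cut out by the conditions. By $p$-adic Hodge theory (Colmez--Fontaine for admissibility, Fontaine for the Weil--Deligne functor), for a finite extension $E'/E$, a homomorphism $x : R^{\square}(\bar{\rho})[1/p] \to E'$ defines a representation $\rho_x$ which is potentially semi-stable of fixed type $(w,\t,\psi\eps)$ if and only if it satisfies a closed condition on the family. The goal is to realize this locus as the generic fiber of a $p$-flat quotient of $R^{\square}(\bar{\rho})$ with the expected geometric properties.

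To do this, I would introduce Kisin's moduli space $X_{w,\t,\psi}$ of Breuil--Kisin modules with descent data of the prescribed type and Hodge--Tate weights over $\spec R^{\square}(\bar{\rho})[1/p]$. Concretely, one fixes a finite Galois extension $F/K$ over which the relevant representations become semi-stable, and parametrizes filtered $(\phi,N,F/K,E)$-modules of the right shape together with a suitable lattice structure. The content is that this moduli problem is representable by a \emph{projective} scheme over $\spec R^{\square}(\bar{\rho})[1/p]$, which maps to it via a proper morphism that is a closed immersion on the locus one cares about. Define $R^{\square,\psi}(w,\t,\bar{\rho})$ to be the $\O_E$-flat quotient of $R^{\square}(\bar{\rho})$ whose generic fiber cuts out the scheme-theoretic image of $X_{w,\t,\psi}$; this immediately yields the $p$-torsion freeness in part (1) and the characterization of $E'$-points in part (2).

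For the rest of part (1), reducedness and equidimensionality of the generic fiber follow from a local analysis of $X_{w,\t,\psi}$. The moduli of weakly admissible filtered $(\phi,N,F/K,E)$-modules of the given shape is smooth over $E$, being an open subscheme inside a product of an affine space (parametrizing the $\phi$, $N$, and Galois action) and a partial flag variety (parametrizing the filtration), so one reads off both reducedness and the dimension. Transferring via the proper morphism to $\spec R^{\square,\psi}(w,\t,\bar{\rho})[1/p]$, which is generically an isomorphism onto its image, preserves these properties.

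The main obstacle is the construction and representability of $X_{w,\t,\psi}$ as a projective scheme over $\spec R^{\square}(\bar{\rho})[1/p]$, and in particular the proof that every $E'$-point of the potentially semi-stable locus actually arises from a point of $X_{w,\t,\psi}$ (i.e., the existence of the required integral structures on the relevant $(\phi,N)$-modules with descent data). This is the technical heart of Kisin's construction and uses the full strength of his theory of Breuil modules and finite flat models.
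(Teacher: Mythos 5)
You are right that the paper offers no proof of this statement beyond the citation of \cite{Kis08}, and the overall architecture you describe (a proper moduli resolution mapping to the deformation space, with the ring defined via a scheme-theoretic image so that $p$-torsion freeness and the characterization of $E'$-points come out of the construction) is indeed the shape of Kisin's argument. But two points in your sketch are off, and the second is a genuine gap. First, the auxiliary space is not a moduli of filtered $(\phi,N,F/K,E)$-modules over $\spec R^{\square}(\bar{\rho})[1/p]$: it is a projective scheme over the \emph{integral} ring $\spec R^{\square}(\bar{\rho})$ parametrizing $\mathfrak{S}$-lattices of bounded $E(u)$-height with descent data (Breuil--Kisin modules) inside the \'etale $\phi$-module of the universal deformation restricted to $G_{K_\infty}$; the filtration, monodromy and type conditions are then imposed on its generic fiber. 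This is not a cosmetic difference: filtered $(\phi,N)$-module data cannot be algebraized directly in families over the generic fiber of the deformation ring, which is exactly why the integral $\mathfrak{S}$-module theory is needed (and ``finite flat models'' is the wrong pointer --- that is the Barsotti--Tate paper \cite{Kis09b}, not \cite{Kis08}).

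Second, your argument for reducedness and equidimensionality would fail: the moduli of weakly admissible filtered $(\phi,N,F/K,E)$-modules of fixed type is \emph{not} smooth. The $(\phi,N)$-locus is cut out by $N\phi=p\phi N$ with $N$ nilpotent, which is reducible (the components $N=0$ and $\overline{\{N\neq 0\}}$) and singular where they meet, namely at weakly admissible points whose Frobenius eigenvalues have ratio $p$ (equivalently where $\Hom(\WD(\rho),\WD(\rho)\otimes\|\cdot\|)\neq 0$); such points exist, and correspondingly $\spec R^{\square,\psi}(w,\t,\bar{\rho})[1/p]$ has non-smooth points --- a fact this very paper relies on, via Allen's Theorem D \cite{All}, in the proof of Proposition \ref{defextended}. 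So you cannot ``read off'' reducedness from smoothness, nor transfer it through a morphism that is only generically an isomorphism. Kisin's actual argument in Section 3 of \cite{Kis08} shows that the completed local rings at closed points of the generic fiber are formally smooth over the completion of this reduced-but-singular local model, deduces reducedness from this together with the density of closed points, and obtains equidimensionality from a separate dimension count (Galois cohomology plus the flag-variety contribution), with smoothness holding only on a dense open subscheme.
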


\begin{rema}
\label{indepdet}
The ring  $R^{\square,\psi}(w,\t,\bar{\rho})$ can be non-zero only
if $w$, $\psi$ and $\t$ satisfy the equality:
$\psi|_{I_K} = (\det\t)\prod_{\tau}\eps_{\tau}^{n_{\tau}+2m_{\tau}}$
where $\eps_{\tau}$ is the Lubin-Tate character corresponding to $\tau$,
so that $\eps = \prod_{\tau}\eps_{\tau}$. In this case we say that $\psi$
is compatible with $\t$ and $w$. Note that by \cite[Lemma 4.3.1]{EG} 
the isomorphism class of $R^{\square,\psi}(w,\t,\bar{\rho})$ does not
depend on $\psi$ as long as it is compatible with $\t$ and $w$.
\end{rema}

\subsubsection{Irreducible components and extended types}

Suppose that $\t$ is a discrete series inertial type, and let $\t'$ be an
extended type such that $\t'|_{I_K}$ is isomorphic to $\t$. We define
a subset of the set of irreducible components of 
$\spec R^{\square,\psi}(w,\t,\bar{\rho})$ by saying that an irreducible
component is of type $\t'$ if:
\begin{enumerate}
\item
when $\t$ is of the form $\red$ or $\irr$, the irreducible component
has an $E'$-point $x$ with $\rho_x$ of extended type $\t'$
for some finite extension $E'/E$; 
\item
when $\t$ is of the form $\scal$, the irreducible component
has an $E'$-point $x$ with $\rho_x$ of
extended type $\t'$ that is not potentially crystalline.
\end{enumerate}

There can exist a component of type $\t'$ only if 
$\det \t' = \WD(\psi\eps)$, hence there are at most two
such extended types and then they are conjugate if $\t$ is of the form
$\scal$ or $\irr$, and at most one such extended type if $\t$ is of the
form $\red$. We say that $\t'$ is
compatible with $(\t,\psi)$ if $\t'|_{I_K}$ is isomorphic to $\t$ and $\det
\t' = \WD(\psi\eps)$. If $\t$ is of the form $\red$ and $\t'$ is
compatible to $(\t,\psi)$ then all irreducible components of $\spec
R^{\square,\psi}(w,\t,\bar{\rho})$ are of type $\t'$.

If a component is of type $\t'$, then for all closed points $x$ over a finite
extension $E'/E$, the representation $\rho_x$ is of type $\t'$. In
particular a component is of at most one extended type. This
follows from:

\begin{prop}
\label{family}
Let $\aa$ be an affinoid algebra which is a domain.
Let $\rho : G_K \to \GL_2(\aa)$ 
be a continuous $\aa$-linear representation
such that for any closed point $x \in \text{Max}(\aa)$, 
the representation $\rho_x$
is potentially semi-stable, with the same discrete series inertial type
$\t$, Hodge-Tate weights and determinant for all $x$.
If $\t$ is scalar and at least
one representation in the family is not potentially crystalline, or if
$\t$ is not scalar, then the extended type is constant in the family.
\end{prop}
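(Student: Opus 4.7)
First, the case $\red$ is automatic: Proposition \ref{two} gives $\t'\otimes\unr(-1)\simeq\t'$, so the determinant condition pins down a unique extended type compatible with $(\t,\psi)$ and there is nothing to prove. I focus on the remaining cases $\scal$ (with $N\neq 0$ at some point $x_0$) and $\irr$, in which two conjugate but non-isomorphic extended types $\t'$ and $\t'\otimes\unr(-1)$ exist. The plan is to show that an appropriate Frobenius trace attached to the family $\rho$ is an analytic function on $\spec\aa$ taking at every closed point one of only two values, and to conclude from the domain hypothesis that it is globally constant.

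Since all $\rho_x$ share the same inertial type $\t$, they all become semi-stable over a common finite Galois extension $F/K$. By the construction in families of filtered $(\phi,N,F/K)$-modules (Kisin \cite{Kis08}), the functor $D_{st}^F$ extends to $\rho$ and yields a family $D$ over $\aa[1/p]$ whose specialization at each closed point $x$ is $D_{st}^F(\rho_x)$. For $n = [F_0 : \Q_p]$, the operator $\phi^n$ is $\aa[1/p]$-linear on each embedding-component of $D$, and its trace $t \in \aa[1/p]$ specializes at every closed point $x$ to (a fixed component of) the Frobenius trace of $\WD(\rho_x)$.

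In case $\irr$, each $\WD(\rho_x)=\ind_{W_L}^{W_K}\chi_x$ with $\chi_x|_{I_L}$ fixed by $\t$ and $\chi_x\chi_x^{\sigma}$ fixed by $\psi$, forcing $\chi_x(\Phi_L)$ to equal one of two specific values $\pm\alpha_0$, so that $t(x)=\pm c$ for a fixed constant $c$ at every closed point. In case $\scal$, upper semi-continuity gives a non-empty Zariski open $U\subset\spec\aa$ on which $N\neq 0$; the commutation relation $N\phi=p\phi N$ then forces the Frobenius eigenvalues on $\WD(\rho_x)$ for $x\in U$ to have the rigid form $(\alpha,q\alpha)$ with $q\alpha^2$ equal to a fixed constant determined by $\psi$, so again $t(x)=\pm c$ at every closed point of $U$. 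In either case, the identity $t(x)^2=c^2$ holds on a Zariski dense set of closed points, hence $t^2=c^2$ in the reduced algebra $\aa[1/p]$; since $\aa[1/p]$ is a domain, $t$ equals $c$ or $-c$ globally, forcing a common Frobenius trace, and hence a common extended type, at every closed point.

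The main technical point is the existence and good behavior of the family $D$ of filtered $(\phi,N,F/K)$-modules over the affinoid base $\aa$, together with the fact that the Frobenius trace $t$ extends as a globally defined analytic function---in particular across the closed locus where $N$ may degenerate to zero in case $\scal$. This follows from Kisin's constructions but requires careful setup, and it is the one step that is not purely formal; once it is in place, the rest of the argument is the short density-plus-domain manipulation above.
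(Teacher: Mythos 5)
Your reduction of the $\red$ case to Proposition \ref{two}, and your treatment of the $\scal$ case (pass to the open dense locus where $N\neq 0$, use the rigid eigenvalue shape $(\alpha_x,q\alpha_x)$ and the fixed determinant to get two possible values, then conclude by density, the domain hypothesis and continuity), are essentially the paper's argument. The gap is in the $\irr$ case, and it is concentrated in your choice of invariant. The trace of the linearized Frobenius $\phi^{n}$ on the family of filtered $(\phi,N,F/K)$-modules is \emph{not} ``a component of the Frobenius trace of $\WD(\rho_x)$'': the Weil--Deligne representation of $W_K$ is built from $\phi$ \emph{together with} the descent data action of $\Gal(F/K)$, and $\operatorname{tr}\phi^{n}$ only sees the restriction of $\WD(\rho_x)$ to $W_F$. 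The two conjugate extended types differ by the twist $\unr(-1)$ of $W_K$, and this twist can be invisible to any $\phi$-only invariant: for instance, if the residue degree of $F/K$ is even, $\unr(-1)|_{W_F}$ is trivial, so the two candidate types have isomorphic restrictions to $W_F$ and identical $(\phi,N)$-modules over $F$; your function $t$ is then literally the same for both and the argument proves nothing.

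Even if you replace $t$ by the honest trace of $r_x$ at a Frobenius element of $W_K$ (which requires carrying the descent data through the family construction --- this is what the paper does, via Berger--Colmez for the family $D_{st}(\rho)$, not \cite{Kis08}, followed by the construction of \cite{CDT}, Appendix B), a trace is still too coarse in the $\irr$ case: for $r_x=\ind_{W_L}^{W_K}\chi_x$ the trace at a Frobenius lift lying outside $W_L$ is $0$, and even for a lift inside $W_L$ one can have $\chi_x+\chi_x^{\sigma}$ vanishing there, so your constant $c$ may be $0$ and the conclusion ``$t^2=c^2$, domain, hence $t$ constant'' becomes vacuous while the two extended types remain distinct. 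This is why the paper does not argue with traces in the $\irr$ case: it chooses a basis (locally on the base) in which the $I_L$-action and the action of a fixed $\alpha\in I_K\setminus I_L$ are given by fixed matrices, observes that the matrices of $r_x(\Frob^2)$ and of $r_x(\beta)$, $\beta\in I_K$, are then constant and that the full matrix of $r_x(\Frob)$ can take only two possible values, and concludes by continuity and integrality of $\aa$. To repair your proof you would need either this matrix-level argument or a carefully chosen conjugation-invariant function of $r_x$ (involving the descent data) that provably separates the two conjugate types for every $\irr$ type, neither of which your trace of $\phi^{n}$ provides.
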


\begin{proof}
Let $F$ be a finite extension of $K$ such that $\rho_x|_{G_F}$ is
semi-stable for all $x$. Such an $F$ exists and is determined by $\t$.
Using the results of \cite[Section 6.3]{BC} we can then construct a free 
$F_0\otimes\aa$-module $D_{st}(\rho)$ with a Frobenius $\phi$, a
monodromy operator $N$ and an action of $\Gal(F/K)$ that are $\aa$-linear
and such that for all
$x$, $\aa/\m_x\otimes_{\aa}D_{st}(\rho)$ is isomorphic to 
$D^F_{st}(\rho_x)$. We can apply the method of the construction of the
Weil-Deligne representation as given in \cite{CDT}, Appendix B.
to $D_{st}(\rho)$. This gives
a continuous representation $r : W_K \to \GL_2(\aa)$ and $N \in 
M_2(\aa)$ such that for all $x$, $(r_x,N_x)$ is the Weil-Deligne
representation attached to $\rho_x$. By assumption all representations $r_x$ have the
same restriction to inertia and the same determinant. If $\t$ is of the
form $\red$, this implies that the isomorphism class of $r_x$ is constant
and hence the extended type is constant in the family.

Suppose now that $\t$ is of the form $\irr$,
that is
$\t = (\ind_{I_L}^{I_K}\chi)|_{I_K}$ for some ramified quadratic
extension $L$ of $K$ and some character $\chi$ of $W_L$ that does not
extend to $W_K$. Then $\t|_{I_L} = \chi\oplus\chi'$ for characters
$\chi,\chi'$ that do not extend to $I_K$. Fix also an element $\alpha \in
I_K\setminus I_L$.
We can choose a basis $(e_1,e_2)$ of $\aa^2$
(after possibly replacing $\text{Max}(\aa)$ by some
admissible covering) such that for all $x$, we have $r_x(\beta)e_1 =
\chi(\beta)e_1$ and $r_x(\beta)e_2 = \chi'(\beta)e_2$ for all $\beta \in
I_L$, and $r_x(\alpha)e_1 = e_2$. 
Let $\Frob$ be any Frobenius element of $W_K$. Then the matrix in this
basis of $r_x(\Frob^2)$ and of $r_x(\beta)$ for any $\beta \in I_K$ is
constant, and the matrix of $r_x(\Frob)$ can take only two possible
values that determine the isomorphism class of $r_x$.
As the matrix varies continuously with $x$ and $\aa$ is
integral, it is constant, hence the isomorphism class of $r_x$ is
constant.

Suppose now that $\t$ is scalar. Let $\Frob$ be any Frobenius element of
$W_K$. Then the isomorphism class of $r_x$ is determined by the
characteristic polynomial of $r_x(\Frob)$.  Let $U$ be the Zariski open
subset of $\text{Max}(\aa)$ defined by the condition $N \neq 0$.  Then on
$U$ the eigenvalues of $r_x(\Frob)$ are of the form $\alpha_x$ and
$q\alpha_x$ for some $\alpha_x$.  As the determinant of $r_x$ is
constant, $\alpha_x^2$ is constant, hence the characteristic polynomial
of $r_x(\Frob)$ can take only two possible values on $U$. As $U$ is
Zariski dense in $\text{Max}(\aa)$ (because $\aa$ is a domain), it can
only take two possible values on $\text{Max}(\aa)$, and in fact only one
by continuity.  Hence the isomorphism class of $r_x$ is constant.
\end{proof}

\subsubsection{Deformation rings of fixed discrete series extended type}
\label{defdefrings}

We define a quotient $R^{\square,\psi}(w,\t',\bar{\rho})$ of
$R^{\square,\psi}(w,\t,\bar{\rho})$ by taking the maximal reduced quotient
supported on the set of irreducible components of
$\spec R^{\square,\psi}(w,\t,\bar{\rho})$ that are of type $\t'$.
We also define, following \cite[Section 5]{GG}, a ring
$R^{\square,\psi}(w,\t^{ds},\bar{\rho})$ corresponding to all the
irreducible components of \emph{some} extended type $\t'$. 

If $\t$ is of the form $\red$, there is exactly one extended type $\t'$
that is compatible with $(\t,\psi)$, and we have
$R^{\square,\psi}(w,\t',\bar{\rho}) = R^{\square,\psi}(w,\t,\bar{\rho}) =
R^{\square,\psi}(w,\t^{ds},\bar{\rho})$.

If $\t$ is of the form $\irr$, $R^{\square,\psi}(w,\t,\bar{\rho}) =
R^{\square,\psi}(w,\t^{ds},\bar{\rho})$, but there are two extended types
that are compatible with $(\t,\psi)$ so
$R^{\square,\psi}(w,\t',\bar{\rho})$ can be different from 
$R^{\square,\psi}(w,\t^{ds},\bar{\rho})$.

If $\t$ is scalar, $R^{\square,\psi}(w,\t^{ds},\bar{\rho})$ is a quotient of
$R^{\square,\psi}(w,\t,\bar{\rho})$ supported only on the components
containing points corresponding to representations that are not potentially crystalline
and is generally different from $R^{\square,\psi}(w,\t,\bar{\rho})$
(see also Lemma 5.5 of \cite{GG} and the remarks preceding it). If $\t'$
is an extended type compatible with $(\t,\psi)$, then
$R^{\square,\psi}(w,\t',\bar{\rho})$ is a quotient of
$R^{\square,\psi}(w,\t^{ds},\bar{\rho})$, but it can be different from it
as there are two possibilities for $\t'$.

Then we have the following properties:
\begin{prop}
\label{defextended}
\begin{enumerate}
\item
$R^{\square,\psi}(w,\t',\bar{\rho})$ is
$p$-torsion free, $R^{\square,\psi}(w,\t',\bar{\rho})[1/p]$ is reduced
and equidimensional. 

\item
for all finite extensions $E'/E$, if a map $x : R^{\square}(\bar{\rho}) \to
E'$ factors through $R^{\square,\psi}(w,\t',\bar{\rho})$ then
the representation $\rho_x$ is of determinant $\psi\eps$,
potentially semi-stable of Hodge-Tate type $w$
and of extended type $\t'$.

\item
for all finite extensions $E'/E$, a map $x : R^{\square}(\bar{\rho}) \to
E'$ such that the representation $\rho_x$ is of determinant
$\psi\eps$, potentially semi-stable of Hodge-Tate type $w$ and of
extended type $\t'$ factors through $R^{\square,\psi}(w,\t',\bar{\rho})$.
\end{enumerate}
\end{prop}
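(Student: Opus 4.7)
The plan is to exploit that $R^{\square,\psi}(w,\t',\bar{\rho})$ is by construction the quotient of $R^{\square,\psi}(w,\t,\bar{\rho})$ by the intersection $I$ of those minimal primes that correspond to the irreducible components of type $\t'$. By Theorem \ref{definertial}, $R^{\square,\psi}(w,\t,\bar{\rho})$ is $p$-torsion free with reduced, equidimensional generic fiber, hence reduced as a ring. Part (1) then follows: the quotient by the radical ideal $I$ is still reduced; $p$ lies in no minimal prime of $R^{\square,\psi}(w,\t,\bar{\rho})$, so the quotient stays $p$-torsion free; and equidimensionality of the generic fiber is preserved since all irreducible components of $\spec R^{\square,\psi}(w,\t,\bar{\rho})[1/p]$ have the same dimension. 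For part (2), a map $x$ factoring through $R^{\square,\psi}(w,\t',\bar{\rho})$ has prime kernel containing $I$, hence containing the minimal prime of some component $C$ of type $\t'$. Theorem \ref{definertial} yields the determinant, Hodge-Tate type, and inertial type of $\rho_x$; to obtain the extended type, apply Proposition \ref{family} to affinoid pieces of the rigid generic fiber of $C$: by definition of being of type $\t'$, $C$ carries a point satisfying the hypotheses of that proposition, so the extended type is constant along $C$ and equals $\t'$ at $x$.

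For part (3), Theorem \ref{definertial} shows that $x$ factors through $R^{\square,\psi}(w,\t,\bar{\rho})$, and it suffices to prove that $x$ lies on at least one component of type $\t'$. When $\t$ is of the form $\red$ or $\irr$, or when $\t$ is scalar with $\rho_x$ not potentially crystalline, $x$ itself is a witness point, so every component of $\spec R^{\square,\psi}(w,\t,\bar{\rho})$ through $x$ is of type $\t'$ and we are done at once. The substantive case is $\t$ scalar with $\rho_x$ potentially crystalline, so $N = 0$ while the Weil-Deligne part already has the discrete series scalar shape dictated by $\t'$. Here I would construct a one-parameter deformation of $\rho_x$, staying inside $\spec R^{\square,\psi}(w,\t,\bar{\rho})$, along which $N$ becomes nonzero while the extended type remains $\t'$.

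Concretely, on the filtered $(\phi,N,F/K,E)$-module side, start from $D = D^F_{st}(\rho_x)$: its Frobenius eigenvalues come in the ratio forced by the discrete series scalar shape of $\t'$, so there exists a (unique up to scalar) nonzero nilpotent $N_0$ on $D$ satisfying $N_0\phi = p\phi N_0$. Take the affinoid $\aa$ of a small rigid disk about $0$ and form $D \otimes_E \aa$, keeping $\phi$, filtration and descent data unchanged but replacing $N$ by $t N_0$. Weak admissibility is an open condition in the parameter and holds at $t = 0$, so after possibly shrinking $\aa$ we obtain a genuine family of Galois representations $\rho_t : G_K \to \GL_2(\aa)$ with $\rho_0 = \rho_x$ and $\rho_t$ semi-stable non-crystalline of extended type $\t'$ for $t \neq 0$. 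The induced map $R^{\square,\psi}(w,\t,\bar{\rho}) \to \aa$ factors through a single irreducible component $C$ (since $\aa$ is a domain); $C$ then carries non-crystalline extended-type-$\t'$ points, hence is of type $\t'$ and contains $x$. The main obstacle is precisely this admissibility check: exhibiting $N_0$ is routine, but ensuring the filtration on $D \otimes_E \aa$ remains admissible for $t$ in an open disk about $0$ is what makes the whole construction work.
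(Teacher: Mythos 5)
Your treatment of (1), (2) and of the easy cases of (3) (types $\red$, $\irr$, and scalar type with $\rho_x$ not potentially crystalline) is essentially the paper's argument. The problem is the remaining case, scalar type with $\rho_x$ potentially crystalline, where your construction has a genuine gap. First, a small point: the step you single out as the ``main obstacle'' is actually trivial --- replacing $N=0$ by $N=tN_0$ only shrinks the set of $(\phi,N)$-stable subobjects to be tested, so weak admissibility at $t=0$ gives it for every $t$, with no openness argument needed. The real difficulties are the two steps you assert without justification: (a) passing from the family of weakly admissible filtered $(\phi,N)$-modules over the disk to ``a genuine family of Galois representations $\rho_t : G_K \to \GL_2(\aa)$'' --- in rigid families the functor from weakly admissible modules back to representations is delicate (this is the admissible versus weakly admissible locus issue), and nothing in your sketch addresses it; and (b) passing from such a family to ``the induced map $R^{\square,\psi}(w,\t,\bar{\rho}) \to \aa$''. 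For (b) you would need a $G_K$-stable lattice over an integral model of $\aa$ whose reduction is $\bar{\rho}$ itself (not merely a representation with the right semisimplification), together with a framing, and you would need the specialization at $t=0$ to be the given point $x$ of the deformation space rather than some other point whose associated $E'$-representation is isomorphic to $\rho_x$; a component through such an $x'$ need not pass through $x$, so the conclusion ``$C$ contains $x$'' does not follow as stated. A workable repair is to argue formally at $x$: identify the complete local ring of $\spec R^{\square,\psi}(w,\t,\bar{\rho})[1/p]$ at $x$ with a potentially semi-stable deformation ring of $\rho_x$ (a theorem of Kisin), use your $N=tN_0$ construction over the Artinian rings $E'[t]/(t^n)$ (where the module-to-representation equivalence is unproblematic) to produce a formal curve through $x$ on which $N\not\equiv 0$, and conclude that some component through $x$ carries non-crystalline points, which by Proposition \ref{family} is then of type $\t'$. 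But as written these steps are missing, and they are the substance of the case.

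For comparison, the paper avoids families and formal curves altogether: since $\rho_x$ is crystalline of discrete series scalar extended type, there is a nonzero map $\WD(\rho_x)\to\WD(\rho_x)\otimes\|\cdot\|$, so by Theorem D of \cite{All} the point $x$ is non-smooth on $\spec R^{\square,\psi}(w,\t,\bar{\rho})[1/p]$; since the union of the crystalline components is smooth (proof of Lemma A.3 of \cite{Kis09a}), $x$ must also lie on a component containing non-crystalline points, which is then of type $\t'$. If you want to keep your more constructive route, you need to supply the ingredients above; otherwise the non-smoothness argument is the shorter path.
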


\begin{proof}
Properties $(1)$ and $(2)$
follow from the analogous properties for the inertial type $\t$,
and Proposition \ref{family}.

In the case not of the form $\scal$, property $(3)$ follows from the fact
that any $\rho_x$ of inertial type $\t$ is of some discrete series
extended type $\t'$. 

Suppose now that $\t$ is scalar, we can suppose that $\t$ is trivial. Let
$x$ be as in $(3)$: $x$ factors through
$R^{\square,\psi}(w,\t,\bar{\rho})$ by Theorem \ref{definertial} and
it defines a representation $\rho : G_K \to
\GL_2(\O_{E'})$ lifting $\bar{\rho}$ for some finite extension $E'$ of
$E$. If $\rho$ is not crystalline, then by definition $x$ factors through
$R^{\square,\psi}(w,\t',\bar{\rho})$. 
Suppose now that $\rho$ is
crystalline. Then $\WD(\rho)$ is of the form $(r,0)$ with $r$ isomorphic
to $\t'$. As $\t'$ is a discrete series extended type with trivial
restriction to inertia, this means that $r = \psi\otimes(1\oplus \|\cdot \|)$
for some unramified character $\psi$ of $W_K$.  In particular, we see
that there exists a nonzero map $\WD(\rho) \to
\WD(\rho)\otimes\|\cdot\|$. By Theorem D of \cite{All}, this means that
the point $x$ defining
the representation $\rho$ is a non-smooth point on 
$\spec R^{\square,\psi}(w,\t,\bar{\rho})[1/p]$. We know from the proof of
Lemma A.3 of \cite{Kis09a} that the union of the crystalline irreducible
components of $\spec R^{\square,\psi}(w,\t,\bar{\rho})[1/p]$ is smooth.
So this means that $x$ is a point of 
$\spec R^{\square,\psi}(w,\t,\bar{\rho})[1/p]$ which is also on an
irreducible component that contains non-crystalline points, and so is a
point on $\spec R^{\square,\psi}(w,\t',\bar{\rho})[1/p]$, and $x$ factors
through $R^{\square,\psi}(w,\t',\bar{\rho})$.
\end{proof}

\begin{rema}
\label{involution}
Suppose that $\bar{\rho}$ is irreducible. 
Then as $\bar{\rho}$ is isomorphic to
$\bar{\rho}\otimes\unr(-1)$, the map $\rho \mapsto
\rho\otimes\unr(-1)$ induces an involution of
$R^{\square,\psi}(w,\t^{ds},\bar{\rho})$ that exchanges 
$R^{\square,\psi}(w,\t',\bar{\rho})$ and
$R^{\square,\psi}(w,\t^{\prime\prime},\bar{\rho})$, where $\t'$ and
$\t^{\prime\prime}$ are the conjugate
extended types compatible with $(\t,\psi)$.
In particular 
$R^{\square,\psi}(w,\t',\bar{\rho})$ and
$R^{\square,\psi}(w,\t^{\prime\prime},\bar{\rho})$
are isomorphic.
\end{rema}

\begin{rema}
For an inertial type of the form $\car$, the extended type is not
constant on irreducible components.  In fact it follows from
\cite[Section 3.2]{GM}
that, in the case $K = \Q_p$, for an inertial type of this form there
exists only a finite number of isomorphism classes of potentially
semi-stable representations of given regular Hodge-Tate weights and
extended type.
\end{rema}

\section{Multiplicities}
\label{automorphic}

Let $D$ be the non-split quaternion algebra over $K$. In this section we
consider all smooth representations as having coefficients in
$\bar{\Q}_p$, unless otherwise specified.

\subsection{Local Langlands and Jacquet-Langlands}
\label{lljl}

We denote by $\JL$ the local Jacquet-Langlands correspondence, that
attaches to every irreducible smooth admissible representation of $D^{\times}$
a discrete series smooth representation of
$\GL_2(K)$ (that is, supercuspidal or a twist of the Steinberg representation).

We denote by $\rec$ the local Langlands correspondence that
attaches to each irreducible smooth admissible representation of $\GL_2(K)$ 
a Weil-Deligne representation of degree $2$ with the normalization
of \cite{HT01}, Introduction.

We set $\LL_D(\pi)  = (\rec \circ \JL(\pi))\otimes\|\cdot\|^{1/2}$, so that
the image of $\LL_D$ is exactly the discrete series Weil-Deligne
representations $(r,N)$ (see Paragraphs \ref{statements} and
\ref{galoismodform} for a justification of the normalization).  We give
some properties of $\LL_D$: let $\psi$ a character of $K^{\times}$, and
denote by $\Norm$ the reduced norm $D \to K$.  For $x \in
\bar{\Q}_p^{\times}$, we denote by $\unr_D(x)$ the character of
$D^{\times}$ given by $\unr(x) \circ \Norm$, and more generally for
$\psi$ a character of $K^{\times}$ we denote by $\psi_D$ the character
$\psi\circ \Norm$ of $D^{\times}$.  Then $\LL_D(\psi_D) =
(\psi\oplus\psi\|\cdot\|,N)$ with $N \neq 0$.

Let $\varpi_D$ be a uniformizer of $D$.
If $a \geq 1$, we set $U_D^a = 1+\varpi_D^a\O_D$. It is an open compact
subgroup of $D^{\times}$, which does not depend on the choice of
$\varpi_D$.
It follows from the
explicit description of smooth representations of $D^{\times}$ (as can be found
for example in Chapter 13 of \cite{BH06}) that any irreducible smooth
representation of $D^{\times}$ that is not a character has one of the
following forms:
\begin{enumerate}
\item
$\pi_D = \ind^{D^{\times}}_{L^{\times}U_D^a}\psi$ for some character $\psi$ and some ramified
quadratic extension $L$ of $K$.

\item
$\pi_D = \ind^{D^{\times}}_{L^{\times}U_D^a}\rho$ for some irreducible representation $\rho$
of $L^{\times}U_D^a$ of dimension $1$ or $q$ and $L$ the unramified quadratic extension of $K$.
\end{enumerate} 

\begin{prop}
\label{classredirr}
Let $r$ be a Weil-Deligne representation of dimension $2$ that is of
the form $\red$ or $\irr$ of Lemma \ref{classtypes}. Then the following
conditions are equivalent:
\begin{enumerate}
\item
the type of $r$ is of the form $\red$.
\item
$\LL_D^{-1}(r) \simeq \LL_D^{-1}(r)\otimes\unr_D(-1)$.
\item
$\LL_D^{-1}(r) = \ind^{D^{\times}}_{L^{\times}U_D^a}\rho$ for $L$ the unramified quadratic
extension of $K$, some $a$ and some representation $\rho$ of
$L^{\times}U_D^a$.
\item
the restriction of $\LL_D^{-1}(r)$ to $\O_D^{\times}$ is the sum of two irreducible
representations that differ by conjugation by $\varpi_D$.
\end{enumerate}
And the following conditions are equivalent:
\begin{enumerate}
\item
the type of $r$ of the form $\irr$.
\item
$\LL_D^{-1}(r) \not\simeq \LL_D^{-1}(r)\otimes\unr_D(-1)$.
\item
$\LL_D^{-1}(r) = \ind^{D^{\times}}_{L^{\times}U_D^a}\psi$ for some ramified quadratic
extension $L$ of $K$, some $a$ and some character $\psi$ of
$L^{\times}U_D^a$.
\item
the restriction of $\LL_D^{-1}(r)$ to $\O_D^{\times}$ is irreducible.
\end{enumerate}
\end{prop}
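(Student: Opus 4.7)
The plan is to reduce to the first block of equivalences. Under the standing hypothesis that $r$ is of form $\red$ or $\irr$, Lemma \ref{classtypes} makes these two cases complementary, so that each condition in the second block is the logical negation of the corresponding condition in the first. For (3) vs.\ (3$'$) this uses the uniqueness of the inducing pair $(L,\rho)$ in the classification recalled just above the proposition; for (4) vs.\ (4$'$) it uses that the restriction to $\O_D^\times$ is semisimple with at most two irreducible summands, a fact that will emerge from the Mackey computation below. Hence it suffices to prove the equivalence of (1), (2), (3), (4) in the first block.

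For (1) $\Leftrightarrow$ (2), I use that $\LL_D$ is compatible with unramified twists: $\LL_D(\pi\otimes\unr_D(c))\simeq\LL_D(\pi)\otimes\unr(c)$, which follows from the analogous compatibilities of $\JL$ and $\rec$ (the fixed twist by $\|\cdot\|^{1/2}$ built into the definition of $\LL_D$ commutes with further unramified twisting). Given this, the equivalence is exactly the second assertion of Proposition \ref{two}.

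For (2) $\Leftrightarrow$ (3), I apply the classification of irreducible smooth non-character representations of $D^{\times}$ to write $\pi := \LL_D^{-1}(r) = \ind^{D^{\times}}_{L^{\times}U_D^a}\rho$ for some quadratic extension $L/K$. The key point is to evaluate the character $\unr_D(-1) = \unr(-1)\circ\Norm$ on $L^{\times}U_D^a$: it is trivial on $U_D^a \subset \O_D^\times$, and on $\ell\in L^{\times}$ equals $(-1)^{v_K(N_{L/K}(\ell))} = (-1)^{f_{L/K}\,v_L(\ell)}$, hence trivial precisely when $L$ is unramified. In the unramified case $\pi\otimes\unr_D(-1)\simeq\pi$ follows immediately; in the ramified case the uniqueness of the inducing data gives $\pi\otimes\unr_D(-1)\not\simeq\pi$, since $\rho$ and $\rho\otimes\unr_D(-1)|_{L^\times U_D^a}$ are then distinct characters of $L^\times U_D^a$.

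For (3) $\Leftrightarrow$ (4), I compute $\pi|_{\O_D^{\times}}$ by Mackey. Since $v_D|_{L^\times} = f_{L/K}\,v_L$, the subgroup $L^{\times}U_D^a$ is contained in the set of elements of $D$-valuation in $f_{L/K}\Z$; consequently $\O_D^\times\backslash D^\times/L^\times U_D^a$ has two elements, represented by $1$ and $\varpi_D$ (which normalizes $L$), when $L$ is unramified, and a single element (we may take $\varpi_D = \varpi_L$) when $L$ is ramified. Mackey then gives $\pi|_{\O_D^\times} = \sigma\oplus\sigma^{\varpi_D}$ with $\sigma = \ind^{\O_D^\times}_{\O_L^{\times}U_D^a}\rho$ in the unramified case, and $\pi|_{\O_D^\times} = \ind^{\O_D^\times}_{\O_L^{\times}U_D^a}\rho$ in the ramified case. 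The main obstacle I expect is verifying that these induced representations of $\O_D^\times$ are irreducible, which one obtains from the type-theoretic description of level-$a$ representations of $\O_D^\times$ as developed in \cite{BH06}, Chapter 13; the remaining arithmetic content reduces to the valuation formula $v_K\circ N_{L/K} = f_{L/K}\,v_L$ and to the choice of uniformizers in $D$.
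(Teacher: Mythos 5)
Your reduction of the second block to the negation of the first, and your treatment of the ramified case of (2)$\Leftrightarrow$(3), both rest on a ``uniqueness of the inducing data'' that is not available and is in fact false as you use it. Since $U_D^a$ is normal in $D^{\times}$ and a Skolem--Noether element $j$ implementing the nontrivial automorphism $\gamma$ of $L/K$ normalizes $L^{\times}$, the subgroup $L^{\times}U_D^a$ is normalized by $j$, so one always has $\ind^{D^{\times}}_{L^{\times}U_D^a}\psi \simeq \ind^{D^{\times}}_{L^{\times}U_D^a}\psi^{j}$ with $\psi^{j}\neq\psi$ in general: distinctness of the inducing characters does not imply non-isomorphism of the inductions. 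To get $\pi\otimes\unr_D(-1)\not\simeq\pi$ in the ramified case you must exclude the possibility $\psi^{j}=\psi\cdot\unr_D(-1)|_{L^{\times}U_D^a}$, and this is exactly where the content lies: such an identity forces $\psi$ to be $\gamma$-invariant on the $1$-units of $L$ (i.e.\ to factor through the norm there), and ruling this out for an irreducible induction is the admissibility/minimality analysis of \cite{BH06} (or \cite{Ger}, or Proposition 3.8 of \cite{GG}), not a formal uniqueness statement. The danger is real: a tamely ramified character $\psi$ of $L^{\times}$ with $\psi(-1)=-1$ satisfies $\psi^{\gamma}=\psi\cdot\unr_L(-1)$, and the corresponding induced Weil-group representation \emph{is} isomorphic to its unramified quadratic twist (it is also induced from the unramified quadratic extension, hence of the form $\red$); only the conductor condition implicit in irreducibility prevents this in your setting, and you never invoke it. The same issue infects the claim that (3) and (3$'$) are mutually exclusive --- the classification recalled before the proposition asserts existence of one of the two shapes, not uniqueness; this particular point can be repaired cheaply by a dimension count ($(q+1)q^{a-1}$ versus $2q^{m}$ or $2q^{m+1}$), but as written it is asserted without argument.

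Beyond that, your architecture is a legitimate alternative to the paper's: the paper proves (1)$\Leftrightarrow$(2) exactly as you do (Proposition \ref{two} plus twist-compatibility of $\LL_D$), but handles (1)$\Leftrightarrow$(3) by the explicit description of $\rec$ and $\JL$ in \cite{BH06} and (3)$\Leftrightarrow$(4) by citing Proposition 3.8 of \cite{GG}, whereas you try to work entirely on the $D^{\times}$ side via the value of $\unr_D(-1)$ on $L^{\times}U_D^a$ and a Mackey decomposition. That route can be made to work (and a small simplification is available: since $\unr_D(-1)$ cuts out the index-two subgroup $K^{\times}\O_D^{\times}$, Clifford theory gives (2)$\Leftrightarrow$(4) directly), but be aware that the irreducibility of the Mackey constituents that you defer to \cite{BH06} is, in the ramified case, \emph{equivalent} to the statement (2$'$) you are trying to prove and is not a consequence of irreducibility of $\pi$ alone; so either cite the intertwining/irreducibility results precisely (as the paper does with \cite{GG}) or supply the level argument explicitly, lest the argument become circular.
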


\begin{proof}
Note first that $\LL_D$ is compatible with twists by characters, as this
is the case for $\rec$ and $\JL$.

$(1) \Leftrightarrow (2)$ comes from Proposition \ref{two}.

$(1) \Leftrightarrow (3)$ comes from the explicit descriptions of the
local Langlands and Jacquet-Langlands correspondence 
(see \cite{BH06}).

$(3) \Leftrightarrow (4)$ is Proposition 3.8 of \cite{GG} (see also 
Sections $5$ and $6$ of \cite{Ger}).
\end{proof}

\subsection{Representations attached to a discrete series inertial type}
\label{reprtype}

\subsubsection{Representations of $D^{\times}$ and $\O_D^{\times}$}
\label{typeD}

Let $\t$ be some discrete series inertial type.  Let $(r,N)$ be some
discrete series Weil-Deligne representation with $\t = r|_{I_K}$.  Let
$\pi_{\t} = \LL_D^{-1}(r,N)$, which depends on the choice of $(r,N)$ only
up to unramified twist. If $(r,N)$ is of the form $\scal$ then $\pi_{\t}$ is
a character of $D^{\times}$, so the restriction of $\pi_{\t}$ to
$\O_D^{\times}$ is irreducible. As we have seen in Proposition
\ref{classredirr},
if $(r,N)$ is of the form $\irr$ then
$\pi_{\t}$ is still irreducible after restriction to $\O_D^{\times}$, and
if $(r,N)$ is of the form $\red$ then the restriction of $\pi_{\t}$ to
$\O_D^{\times}$ is the sum of two irreducible constituents that differ by
conjugation by $\varpi_D$.
Let $\sigma_D(\t)$ be one of the irreducible
constituents of the restriction of $\pi_{\t}$ to $\O_D^{\times}$; it
depends only on $\t$ and not on the choice of $(r,N)$ (this is the same
as the representation $\sigma_D(\t)$ of \cite{GG}, Section 3.1).  We can
recover $\pi_{\t}$ from $\sigma_D(\t)$ up to unramified twist. Hence we
have the following property:

\begin{prop}
\label{typesD}
Let $\pi_D$ be a smooth irreducible representation of $D^{\times}$.
Then $\Hom_{\O_D^{\times}}(\sigma_D(\t),\pi_D) \neq 0$ if and only if
$\LL_D(\pi_D)|_{I_K} \simeq \t$.
\end{prop}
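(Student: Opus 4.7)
The plan is to prove the two directions of the equivalence, using two inputs: the compatibility of $\LL_D$ with unramified twists (which follows from the corresponding properties of $\rec$ and $\JL$), and the statement, recorded just before the proposition, that $\pi_\t$ is recovered from $\sigma_D(\t)$ up to unramified twist. I will also use the observation that any unramified character $\unr_D(x) = \unr(x)\circ\Norm$ of $D^\times$ is trivial on $\O_D^\times$, since $\Norm(\O_D^\times) \subseteq \O_K^\times$.

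For the direction assuming $\LL_D(\pi_D)|_{I_K}\simeq\t$, I fix $(r,N) = \LL_D(\pi_\t)$, so that $r|_{I_K} = \t$. The underlying $W_K$-representations of $\LL_D(\pi_D)$ and $(r,N)$ are discrete series extended types with isomorphic restrictions to $I_K$, so Proposition \ref{two} gives that they differ by an unramified twist; the monodromy operator then matches up, yielding $\LL_D(\pi_D) \simeq (r,N)\otimes\unr(x)$ for some $x \in \bar{\Q}_p^\times$, and applying $\LL_D^{-1}$ produces $\pi_D \simeq \pi_\t \otimes \unr_D(x)$. Restricting to $\O_D^\times$ kills the unramified twist, so $\pi_D|_{\O_D^\times} \simeq \pi_\t|_{\O_D^\times}$, which contains $\sigma_D(\t)$ by construction. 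For the converse direction, the forward implication applied to $\pi_\t$ itself shows that $\sigma_D(\t)$ is a constituent of $\pi_\t|_{\O_D^\times}$, so if $\sigma_D(\t)$ is also a constituent of $\pi_D|_{\O_D^\times}$ the recovery statement forces $\pi_D \simeq \pi_\t \otimes \unr_D(y)$; applying $\LL_D$ and restricting to $I_K$ then gives $\LL_D(\pi_D)|_{I_K} \simeq \t$.

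The main obstacle is the recovery statement itself: an irreducible smooth representation of $D^\times$ is determined up to unramified twist by any of its $\O_D^\times$-irreducible constituents. I would verify this case by case along the classification of irreducible smooth reps of $D^\times$ recalled in the excerpt. When $\pi_D$ is a character, two characters of $D^\times$ with the same restriction to $\O_D^\times$ differ by a character trivial on $\O_D^\times$, which is automatically unramified since $D^\times / \O_D^\times$ is infinite cyclic generated by $\varpi_D$. When $\pi_D = \ind_{L^\times U_D^a}^{D^\times}\rho$, Frobenius reciprocity combined with the Mackey decomposition of $\O_D^\times \backslash D^\times / L^\times U_D^a$ (a single double coset, because $L^\times \cdot \O_D^\times = D^\times$ in the unramified case and $D^\times = \O_D^\times \sqcup \varpi_D \O_D^\times$ with $\varpi_D \in L^\times$ in the ramified case) identifies the $\O_D^\times$-constituents of $\pi_D$ with those of $\rho|_{\O_L^\times U_D^a}$; so the constituent pins down $\rho$ on $\O_L^\times U_D^a$, and the only remaining freedom is the value of $\rho$ on a uniformizer of $L$, which corresponds precisely to unramified twisting of $\pi_D$. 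The dichotomy between type $\red$ (reducible restriction, two $\varpi_D$-conjugate constituents) and type $\irr$ (irreducible restriction) matches condition $(4)$ of Proposition \ref{classredirr}.
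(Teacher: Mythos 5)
Your overall reduction of the proposition to two inputs --- compatibility of $\LL_D$ with unramified twists together with Proposition \ref{two} for one direction, and the ``recovery'' statement for the converse --- is sound, and it is essentially how the paper reads: the recovery claim is stated just before the proposition and is treated as known, the real reference being \cite{GG}, Section 3.1 (ultimately \cite{Ger}), not a new argument. Your forward direction is fine: for discrete series Weil--Deligne representations the monodromy operator is determined up to isomorphism by the underlying $W_K$-representation, so the unramified twist of $r$ provided by Proposition \ref{two} does upgrade to an isomorphism of Weil--Deligne representations, and $\unr_D(x)$ is trivial on $\O_D^{\times}$.

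The gap is in your verification of the recovery statement. First, the Mackey step is wrong in the unramified case: if $L/K$ is unramified, every element of $L^{\times}$ has even valuation in $D$ (a uniformizer of $L$ is $\varpi_K=\varpi_D^2$), so $L^{\times}\O_D^{\times}$ has index $2$ in $D^{\times}$ and $\O_D^{\times}\backslash D^{\times}/L^{\times}U_D^a$ has two classes, represented by $1$ and $\varpi_D$; accordingly $\pi_D|_{\O_D^{\times}}\simeq \ind_{\O_L^{\times}U_D^a}^{\O_D^{\times}}(\rho)\oplus\ind_{\O_L^{\times}U_D^a}^{\O_D^{\times}}(\rho^{\varpi_D})$, which is precisely what produces the two $\varpi_D$-conjugate constituents in Proposition \ref{classredirr}(4) --- your single-coset claim would contradict that count (in the ramified case the single coset is correct since $\varpi_L$ is a uniformizer of $D$, but even there the constituents of $\pi_D|_{\O_D^{\times}}$ are induced representations, not ``the constituents of $\rho|_{\O_L^{\times}U_D^a}$''). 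Second, and more structurally, your case analysis only compares two representations assumed to be induced from the \emph{same} subgroup $L^{\times}U_D^a$ (or both assumed to be characters), whereas the recovery statement must say that \emph{any} irreducible $\pi_D$ whose restriction contains the given $\sigma_D(\t)$ is an unramified twist of $\pi_{\t}$; you have to exclude that the same $\O_D^{\times}$-constituent arises from inducing data of a different shape: a character versus a higher-dimensional representation, a different quadratic extension, or a different level $a$. For example, to rule out that a character $\psi\circ\Norm|_{\O_D^{\times}}$ (the $\scal$ case) occurs in some higher-dimensional irreducible one can argue that such a character is trivial on the norm-one subgroup $D^1\subset\O_D^{\times}$, which is normal in $D^{\times}$, so the $D^1$-invariants form a nonzero subrepresentation and $\pi_D$ factors through $\Norm$, hence is a character; separating inductions from different $L$ or different $a$ similarly requires conductor or intertwining input. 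None of this is in your sketch, and it is exactly where the content of the statement lies (also note that ``the value of $\rho$ on a uniformizer corresponds to unramified twisting'' uses that every scalar is a square in $\bar{\Q}_p^{\times}$, since in the unramified case $\unr_D(y)(\varpi_K)=y^2$). The quickest complete fix is to invoke the classification result that restriction to $\O_D^{\times}$ induces a bijection between irreducible smooth representations of $D^{\times}$ up to unramified twist and $\varpi_D$-conjugation orbits of irreducible smooth representations of $\O_D^{\times}$ (\cite{GG}, Section 3, or \cite{Ger}, or \cite{BH06}), which is what the paper implicitly does.
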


\begin{rema}
As was already noted in \cite{GG},
contrary to the case of $\GL_2$, we see that the type for $D^{\times}$ is
not unique, at least for representations of the form $\red$. On the other
hand, as there are only one or two irreducible constituents for the
restriction to $\O_D^{\times}$ of a smooth irreducible representation of
$D^{\times}$, it is much easier to find a type.
\end{rema}

\subsubsection{The group $\G_{\varpi_K}$}
\label{GpiK}

Let $\varpi_K$ be a uniformizer of $K$ and $\varpi_D$ a uniformizer of
$D$ with $\varpi_D^2 = \varpi_K$.
Let $\G_{\varpi_K} = D^{\times}/\varpi_K^{\Z}$.
Then $\G_{\varpi_K}$ is isomorphic to the semi-direct product
$\O_D^{\times} \rtimes_{\varpi_D} \{1,\iota\}$, where the action of
$\iota$ on $\O_D^{\times}$ is by conjugation by $\varpi_D$. 
As a group, $\G_{\varpi_K}$ depends on $\varpi_K$, but 
not on the choice of $\varpi_D$ such that $\varpi_D^2 = \varpi_K$.  
Let $\xi=\unr_D(-1)$, that is the character of
$\G_{\varpi_K}$ that is trivial on $\O_D^{\times}$ and sends $\iota$ to
$-1$. 

Let $\t$ be a discrete series inertial type.
Using the representation $\pi_{\t}$ of Paragraph
\ref{typeD}, we can attach to $\t$ a smooth representation $\sigmaG(\t)$
of $\G_{\varpi_K}$, or equivalently a smooth representation of
$D^{\times}$ that is trivial on $\varpi_K$: there is an unramified twist
of $\pi_{\t}$ that is trivial on $\varpi_K$, as $\pi_{\t}(\varpi_K)$ is
scalar.

The relation between $\sigmaG(\t)$ and $\sigma_D(\t)$ is then given by
Proposition \ref{classredirr}.
If $\t$ is of the form $\scal$ or $\irr$ then the representation
$\sigmaG(\t)$ is defined only up to twist by $\xi$, 
and the restriction of $\sigmaG(\t)$
to $\O_D^{\times}$ is isomorphic to $\sigma_D(\t)$, which is irreducible.
If $\t$ is of the form
$\red$, then there is only one possibility for $\sigmaG(\t)$ and the
restriction of $\sigmaG(\t)$ to $\O_D^{\times}$ is isomorphic to the direct sum
of $\sigma_D(\t)$ and the representation $\sigma_D(\t)^{\varpi_D}$ 
obtained from $\sigma_D(\t)$ by conjugation by a uniformizer.

\begin{rema}
The representation $\sigmaG(\t)$ of $\G_{\varpi_K}$
is the analogue in our situation of the
representation $\sigma_{\tau'}$ of $\tilde{U}_0(\ell)$ in Section 1.2
of \cite{BCDT}.
\end{rema}

\subsubsection{Realizations of $\G_{\varpi_K}$}
\label{realization}

Let $K'$ be the unramified quadratic extension of $K$. By fixing an
embedding of $K'$ into $D$ and a basis of $D$ as a $K'$-vector space, we
can define an embedding $D^{\times} \to \GL_2(K')$, hence, after choosing
$K' \to \bar{\Q}_p$, an embedding $u: D^{\times} \to \GL_2(\bar{\Q}_p)$.
All such embeddings are conjugate 
in $\GL_2(\bar\Q_p)$ by the Skolem-Noether theorem.

Fix now $\varpi_K$ a uniformizer of $K$, $\varpi_D$ a square root of
$\varpi_K$ in $D$ and $\sqrt{\varpi_K}$ a square root of $\varpi_K$ in
$\bar{\Q}_p$. With these choices we can define an embedding $\tilde{u} :
\G_{\varpi_K} \to \GL_2(\bar{\Q}_p)$ by setting
$\tilde{u}|_{\O_D^{\times}} = u|_{\O_D^{\times}}$ and $\tilde{u}(\iota) =
\sqrt{\varpi_K}^{-1}u(\varpi_D)$.

Note that for each choice of $\varpi_K$ and $\sqrt{\varpi_K}$, all the
possible $\tilde{u}$ corresponding to the various choices of $u$ and
$\varpi_D$ are conjugate in $\GL_2(\bar{\Q}_p)$. Moreover for varying
choices of $\varpi_K$ all the $\tilde{u}|_{\O_D^{\times}}$ are conjugate. 

\subsection{Representations of $\Gamma_K$}

\subsubsection{The group $\Gamma_K$}
\label{GammaK}

Let $k$ be the residue field of $K$ and $\ell$ its quadratic extension,
so that $\O_D/\varpi_D \simeq \ell$. We define the group $\Gamma_K =
\ell^{\times} \rtimes \{1,\iota\}$ where $\iota$ acts on $\ell^{\times}$ by the non-trivial
$k$-automorphism of $\ell$. 

The quotient $\G_{\varpi_K}/(1+\varpi_D\O_D)$ is naturally isomorphic to
the group $\Gamma_K$, and the map $\G_{\varpi_K} \to \Gamma_K$ extends
the natural morphism $\O_D^{\times} \to \ell^{\times}$. As
$1+\varpi_D\O_D$ is a pro-$p$-group, any semi-simple representation of
$\G_{\varpi_K}$ in characteristic $p$ factors through $\Gamma_K$.

\subsubsection{Irreducible representations of $\Gamma_K$ in
characteristic $p$}

Let $\bar{\F}$ be an algebraic closure of $k$.
Fix $\ell \to \bar{\F}$, and let $q$ be the cardinality of $k$.

For an element $a$ in $\Z/(q^2-1)\Z$, we denote by $\chi_a$ the character
of $\ell^{\times}$ sending $x$ to $x^a$.

For an element $a$ in $\Z/(q-1)\Z$, we denote by $\delta_a$ the character of $\Gamma_K$ 
such that $\delta_a$ coincides with $\chi_{a(q+1)} = \chi_a|_{k^{\times}} \circ N_{\ell/k}$ 
on $\ell^{\times}$ and $\delta_a(\iota) = 1$.

Let $\xi$ be the character of $\Gamma_K$ that is trivial on $\ell^{\times}$ and
$\xi(\iota) = -1$.

Let $r_a = \ind_{\ell^{\times}}^{\Gamma_K}\chi_a$ for $a\in
\Z/(q^2-1)\Z$. Then $r_a$ is irreducible if and only if $a$ is not
divisible by $q+1$. If $a = (q+1)b$ then $r_a$ is isomorphic to 
$\delta_b\oplus\xi\delta_b$.

\begin{prop}
The irreducible representations of $\Gamma_K$ with coefficients in
$\bar{\F}$
are exactly the following:
\begin{itemize}
\item
The characters $\delta_a$ for $a \in \Z/(q-1)\Z$.

\item
The characters $\xi\delta_a$ for $a \in \Z/(q-1)\Z$.

\item
The representations $r_a$ for $a\in \Z/(q^2-1)\Z$ not divisible by $q+1$.
\end{itemize}

Moreover, these representations are all distinct, except for 
the relation $r_a = r_{qa}$. 
Finally, $\xi r_a$ is isomorphic to $r_a$, and $r_a|_{\ell^{\times}}
=\chi_a\oplus\chi_{qa}$.
\end{prop}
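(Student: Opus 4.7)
The strategy I would follow is Clifford theory with respect to the normal subgroup $\ell^{\times}$ of index $2$ in $\Gamma_K$. Because $p > 2$, the order $2(q^2-1)$ of $\Gamma_K$ is prime to $p$, so Maschke's theorem applies over $\bar{\F}$: every representation is semi-simple, induction behaves as in characteristic $0$, and in particular we may freely use Frobenius reciprocity and the projection formula.

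The irreducible representations of $\ell^{\times}$ over $\bar{\F}$ are exactly the $\chi_a$, and the action of $\iota$ on characters sends $\chi_a$ to $\chi_a^{\iota}(x) = \chi_a(\iota x \iota^{-1}) = \chi_a(x^q) = \chi_{qa}(x)$, since $\iota$ acts on $\ell$ as the non-trivial $k$-automorphism. Thus $\chi_a$ is $\iota$-fixed if and only if $(q-1)a \equiv 0 \pmod{q^2-1}$, i.e.\ $a \in (q+1)\Z/(q^2-1)\Z$; writing such $a$ as $(q+1)b$ with $b \in \Z/(q-1)\Z$ identifies the fixed characters with $\chi_a|_{k^{\times}} \circ N_{\ell/k}$.

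By Clifford theory there are then two mutually exclusive cases. First, if $(q+1) \nmid a$, then $\chi_a \not\simeq \chi_a^{\iota}$ and $r_a = \ind_{\ell^{\times}}^{\Gamma_K}\chi_a$ is irreducible of dimension $2$. Mackey's formula applied to the trivial double coset decomposition gives $r_a|_{\ell^{\times}} \simeq \chi_a \oplus \chi_{qa}$, from which $r_a \simeq r_{qa}$ (since inducing conjugate characters of a normal subgroup yields isomorphic representations) and conversely distinct orbits yield non-isomorphic $r_a$. The projection formula gives $\xi \otimes r_a \simeq \ind_{\ell^{\times}}^{\Gamma_K}(\chi_a \otimes \xi|_{\ell^{\times}}) = r_a$ since $\xi$ is trivial on $\ell^{\times}$. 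Second, if $a = (q+1)b$, then $\chi_a$ extends to $\Gamma_K$ in exactly two ways, differing by $\xi$, and these extensions are by definition $\delta_b$ and $\xi\delta_b$; by Frobenius reciprocity $\ind_{\ell^{\times}}^{\Gamma_K}\chi_{(q+1)b} \simeq \delta_b \oplus \xi\delta_b$, and $\delta_b \not\simeq \xi\delta_b$ because $\xi$ is non-trivial (using $p>2$).

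To see that this list exhausts all irreducibles, I would check completeness by the sum of squares: the characters contribute $2(q-1)$, and the $r_a$ contribute $\tfrac{q(q-1)}{2} \cdot 4 = 2q(q-1)$, summing to $2(q^2-1) = |\Gamma_K|$, as required. The argument has no real obstacle beyond keeping the normalizations straight; the only subtle point is the identification of the extensions $\delta_b, \xi\delta_b$ with the stated restriction $\chi_{b(q+1)}$ on $\ell^{\times}$, which is forced by the definition and the requirement $\delta_b(\iota) = 1$.
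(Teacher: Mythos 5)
Your argument is correct: since $p>2$ and $q$ is a power of $p$, the order $2(q^2-1)$ of $\Gamma_K$ is prime to $p$, so ordinary (semisimple) representation theory applies, and your Clifford-theory analysis of the index-$2$ normal subgroup $\ell^{\times}$, together with the sum-of-squares count $2(q-1)+4\cdot\tfrac{q(q-1)}{2}=2(q^2-1)$, establishes all the assertions. The paper states this proposition without proof as a standard fact, and your proof is exactly the standard argument one would supply, so there is nothing to compare beyond noting the agreement.
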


The irreducible representations of $\Gamma_K$ are the analogue in our
situation of the Serre weights.

\subsubsection{Reduction modulo $p$ of representations of $\G_{\varpi_K}$ attached to
discrete series types}
\label{reduction}

Let $\sigmaG(\t)$ be a representation
of $\G_{\varpi_K}$ attached to a discrete series inertial type $\t$ as in
Paragraph \ref{GpiK}. 
%We can see $\sigmaG(\t)$ as having coefficients in
%$\bar{\Q}_p$, as it has finite image.
As $\G_{\varpi_K}$ is
compact, we can find an invariant lattice in $\sigmaG(\t)$, and consider the
semi-simplification of the reduction modulo $p$ of this representation. We denote by
$\bar{\sigmaG}(\t)$ the representation of $\Gamma_K$ that we obtain (it is
semi-simple, independent of the choice of the invariant lattice and its
restriction to $\ell^{\times}$ is independent of any choice). 

\begin{prop}
\label{liftistype}
Each irreducible representation of $\Gamma_K$ over $\bar{\F}$ has a lift
in characteristic $0$ that is of the form $\sigmaG(\t)$ for some discrete
series inertial type $\t$.
\end{prop}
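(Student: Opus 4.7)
The preceding proposition classifies the irreducible $\bar{\F}$-representations of $\Gamma_K$ into two families: one-dimensional characters $\delta_a$ and $\xi\delta_a$ for $a\in\Z/(q-1)\Z$, and two-dimensional inductions $r_a=\ind_{\ell^\times}^{\Gamma_K}\chi_a$ for $a\in\Z/(q^2-1)\Z$ with $(q+1)\nmid a$. The plan is to treat the two families separately: scalar discrete series inertial types will handle the characters, and inertial types of form $\red$ will handle the $r_a$. In each case the essential input is a single tamely ramified character of an appropriate local Weil group, chosen by local class field theory.

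For the character case, given $a\in\Z/(q-1)\Z$, I would pick a tamely ramified character $\chi$ of $W_K$ whose restriction to $\O_K^\times$ factors through $k^\times$ as $\bar x\mapsto\bar x^a$, and set $\t=\chi|_{I_K}\oplus\chi|_{I_K}$ with extended type $\chi\oplus\chi\|\cdot\|$. By Paragraph~\ref{lljl}, $\pi_\t=\chi_D=\chi\circ\Norm$, so $\sigmaG(\t)$ is one of the two unramified twists of $\chi_D$ trivialized on $\varpi_K$, which differ by $\xi$. Since the reduced norm $\Norm:\O_D^\times\to\O_K^\times$ induces the norm $x\mapsto x^{q+1}$ on residue fields, the reduction of $\chi_D|_{\O_D^\times}$ is $\chi_{a(q+1)}$ on $\ell^\times$; the two possible twists then yield $\delta_a$ and $\xi\delta_a$ respectively.

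For $r_a$, let $L$ be the unramified quadratic extension of $K$ with residue field $\ell$, and choose a tamely ramified character $\chi$ of $W_L$ whose restriction to $\O_L^\times$ factors through $\ell^\times$ as $\bar x\mapsto\bar x^a$. The hypothesis $(q+1)\nmid a$ is equivalent to $\chi$ not being $\Gal(L/K)$-invariant, hence to $\chi$ not extending to $W_K$, so $\t=(\ind_{W_L}^{W_K}\chi)|_{I_K}$ is a discrete series inertial type of form $\red$. I would then use the explicit form of $\LL_D^{-1}$ for level-zero supercuspidals attached to the unramified quadratic extension (cf.\ Chapter~13 of \cite{BH06}) to write $\pi_\t=\ind_{L^\times U_D^1}^{D^\times}\tilde\chi$, with $\tilde\chi$ the character of $L^\times U_D^1$ extending $\chi$ on $L^\times$ and trivial on $U_D^1$. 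The identity $\O_L^\times U_D^1=\O_D^\times$ (both have residue field $\ell$) combined with the Mackey formula gives $\pi_\t|_{\O_D^\times}=\sigma\oplus\sigma^{\varpi_D}$ with $\sigma=\tilde\chi|_{\O_D^\times}$, which mod $p$ become $\chi_a$ and $\chi_{qa}$ on $\ell^\times$. In $\sigmaG(\t)$, the element $\iota$ swaps the two lines, so after reduction and passage to $\Gamma_K$ we obtain $\ind_{\ell^\times}^{\Gamma_K}\chi_a=r_a$.

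The main obstacle is the explicit identification of $\pi_\t$ as a compact induction from $L^\times U_D^1$ in the $\red$ case, together with tracking its central character through the unramified twist that defines $\sigmaG(\t)$; once this is in hand, everything else reduces to short residue-field computations and a single application of the Mackey formula.
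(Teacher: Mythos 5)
Your proposal is correct and follows essentially the same route as the paper: scalar types built from (Teichmüller-type) lifts for the one-dimensional representations, and types of the form $\red$ induced from a tame character of the Weil group of the unramified quadratic extension for the $r_a$, with the explicit identification of $\pi_{\t}$ and its restriction to $\O_D^{\times}$ deferred to Chapter 13 of \cite{BH06} — you merely spell out the Mackey/reduced-norm computations that the paper leaves to that reference. (One harmless imprecision: $(q+1)\nmid a$ is equivalent to $\chi|_{I_L}$ not being Galois-invariant, not to $\chi$ itself; but only the implication you actually use is needed.)
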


\begin{proof}
Let $\delta$ be an irreducible representation of $\Gamma_K$ of dimension $1$.
It is of the form $\chi  \circ N_{\ell/k}$ or $\xi\chi  \circ N_{\ell/k}$
for some character $\chi$ of $k^{\times}$.
We define a scalar inertial type $\t_{\delta}$ 
by $\t_{\delta} = (\tilde{\chi}\oplus\tilde{\chi})|_{I_K}$, where
$\tilde{\chi}$ denotes the image by local class field theory of the
Teichmüller lift of the character $\chi \circ ({K}^{\times} \to
k^{\times})$.  Then we can choose $\sigmaG(\t_{\delta})$ so that
$\bar{\sigmaG}(\t_{\delta})$ is isomorphic to $\delta$ (note that
$\sigmaG(\t_{\delta})$ depends on $\delta$ and not only on
$\t_{\delta}$).

Let $r$ be an irreducible representation of $\Gamma_K$ of dimension $2$.
There exists $a \in \Z/(q^2-1)\Z$ not divisible by $q+1$
such that $r|_{\ell^{\times}} = \chi_a\oplus\chi_{qa}$. Let $K'$ be the
unramified quadratic extension of $K$, and $\tilde{\chi}_a : W_{K'} 
\to \bar{\Q}_p^{\times}$ the tame character given by the Teichmüller lift
of $\chi_a \circ ({K'}^{\times} \to \ell^{\times})$. 
We define an inertial type $\t_r$ of the form $\red$ by
$\t_r = (\ind_{W_{K'}}^{W_K}\tilde{\chi}_a)|_{I_K}$.
Then $\bar{\sigmaG}(\t_r)$ is isomorphic to $r$, as follows from the
explicit constructions in Chapter 13 of \cite{BH06}.
\end{proof}

We denote by $\R(\Gamma_K)$ the Grothendieck ring of
representations of $\Gamma_K$ with coefficients in $\bar{\F}$.
We denote by $[\sigma]$ the image in $\R(\Gamma_K)$ of a representation $\sigma$
of $\Gamma_K$.

We now compute the reduction of some representations of $\G_{\varpi_K}$
attached to discrete series types.
Let $L$ be the ramified quadratic extension of $K$ generated by the
square root of $\varpi_K$, and fix an embedding of $L$ into $D$. 
Any semi-simple representation of $L^{\times}U^a_D$ in characteristic
$p$ is trivial on $U_D^1\cap L^{\times}U^a_D$ as this is a pro-$p$-group
(recall that $U_D^a$ was defined
in Paragraph \ref{lljl}).
Any representation of $L^{\times}U^a_D$
that is trivial on the subgroup generated by $\varpi_K$
factors through the image of $L^{\times}U^a_D$ inside $\G_{\varpi_K}$.
Hence any semi-simple representation of $L^{\times}U^a_D$ in
characteristic $p$ that is trivial on $\varpi_K$ 
factors through 
the subgroup $L^{\times}U_D^a/\langle U_D^1,\varpi_K^{\Z}\rangle$ of
$\Gamma_K$. Let us call this subgroup $\Delta$, it is equal to
$k^{\times}\times\{1,\iota\} \subset \ell^{\times}\rtimes \{1,\iota\}$.

\begin{prop}
\label{reductypeirr}
Let $L$ be as above and let $\theta =
\ind^{D^{\times}}_{L^{\times}U^a_D}\psi$ for some smooth character $\psi$
of $L^{\times}U^a_D$ with $\psi(\varpi_K)=1$. Then $\theta$ factors through
$\G_{\varpi_K}$. Denote by $\bar{\theta}$ the semi-simple representation of
$\Gamma_K$ which is the reduction modulo $p$ of $\theta$. 
Let $n \in \Z/(q-1)\Z$ and $\alpha \in \{0,1\}$ be such that $\bar{\psi} =
\xi^{\alpha}\chi_n|{\Delta}$ as a representation of $\Delta$. We denote
by $I(\bar{\psi})$ the set of irreducible representations of $\Gamma_K$
with central character equal to $\bar{\psi}|_{k^{\times}}$.
Then we have in $\R(\Gamma_K)$:
\begin{enumerate}
\item 
if $\psi(-1)=-1$, that is, $n$ is odd, then $I(\bar{\psi})$ consists of
$(q+1)/2$ representations of dimension $2$, and:
\[
[\bar{\theta}] = q^{a-1}\left(\sum_{r\in I(\bar{\psi})}[r]\right)
\]
\item 
if $\psi(1)=1$, that is, $n$ is even, then $I(\bar{\psi})$ consists
of $(q-1)/2$ representations of dimension $2$ and $4$ representations of
dimension $1$, and:
\[
[\bar{\theta}] = q^{a-1}\left(\sum_{\substack{r\in I(\bar{\psi})\\
\dim(r)=2}}
[r]\right)
+ \frac{q^{a-1}+1}{2} [\xi^{\alpha}]([\delta_{n/2}] +
[\delta_{(n+q-1)/2}])
+ \frac{q^{a-1}-1}{2} [\xi^{\alpha+1}]([\delta_{n/2}] +
[\delta_{(n+q-1)/2}])
\]
\end{enumerate}
\end{prop}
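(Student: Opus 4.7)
The plan is to identify $[\bar\theta]\in\R(\Gamma_K)$ by computing its Brauer character on $\Gamma_K$, which is enough because $p>2$ makes $|\Gamma_K|=2(q^2-1)$ coprime to $p$. Since $\psi(\varpi_K)=1$, $\theta$ factors through $\G_{\varpi_K}$ and $[\bar\theta]$ further descends to $\Gamma_K=\G_{\varpi_K}/U_D^1$; it remains to evaluate the character on $\ell^\times$ and on the coset $\iota\ell^\times$.

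First, I would compute $[\bar\theta|_{\O_D^\times}]$. Because $\varpi_D\in L^\times$, the Mackey decomposition is trivial and $\theta|_{\O_D^\times}=\ind_{\O_L^\times U_D^a}^{\O_D^\times}\psi$. The Frobenius character formula on Teichmüller lifts $t\in\ell^\times$ gives $(q+1)q^{a-1}t^n$ for $t\in k^\times$ (by centrality: $t$ lies in the center of $\O_D^\times$ so every coset contributes) and $0$ for $t\in\ell^\times\setminus k^\times$ (the abelian quotient $\ell^\times$ cannot move $t$ into the smaller subgroup $\O_L^\times U_D^a$, whose image in $\ell^\times$ is $k^\times$). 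Character orthogonality then shows that each $\chi_a$ with $a\equiv n\pmod{q-1}$ appears in $[\bar\theta|_{\ell^\times}]$ with multiplicity exactly $q^{a-1}$. Using the restrictions $r_a|_{\ell^\times}=\chi_a\oplus\chi_{qa}$ and $\delta_b|_{\ell^\times}=\chi_{b(q+1)}$ for the irreducibles of $\Gamma_K$, this completes case~(1) with all 2-dimensional multiplicities equal to $q^{a-1}$, and in case~(2) it gives $[\delta_b:\bar\theta]+[\xi\delta_b:\bar\theta]=q^{a-1}$ for each $b\in\{n/2,(n+q-1)/2\}$, but leaves the $\xi$-twist undetermined.

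To pin down the $\xi$-split I would compute $\chi_{\bar\theta}(t\iota)$ for $t\in\ell^\times$ using the coset form of Frobenius on $\O_D^\times/\O_L^\times U_D^a$, with $\iota$ acting through the involution $\phi:=\mathrm{Ad}(\varpi_D)$. Writing a coset representative as $g=\eta u$ with $\eta\in\mu_{q^2-1}$ Teichmüller and $u\in U_D^1$, the fixed-point condition $g^{-1}\phi(g)\in\O_L^\times U_D^a$ splits into a Teichmüller condition on $\bar\eta$ (two good cosets when $\bar t$ is a square in $\ell^\times/k^\times$, none otherwise) and the condition that the class of $u$ in $A:=U_D^1/(U_L^1 U_D^a)$ be $\phi$-fixed. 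The crucial fact is that $A^\phi$ is trivial: the natural filtration of $A$ by images of $U_D^i$ has graded pieces isomorphic to $\ell/k$, on which $\phi$ (the non-trivial Frobenius of $\ell/k$) acts as $-1$ modulo $k$, so $\phi-1=-2$ is invertible since $p>2$, and successive approximation along the filtration forces $A^\phi=\{1\}$. Hence there are exactly two fixed cosets, whose $\bar\psi$-values differ by the sign coming from $\bar\eta^{q-1}=\pm 1$; explicit summation yields $\chi_{\bar\theta}(t\iota)=2(-1)^\alpha N_{\ell/k}(\bar t)^{n/2}$ when $n$ is even and $\bar t$ is a square in $\ell^\times/k^\times$, and $0$ in the remaining cases (in particular for $n$ odd the two contributions cancel via $(-1)^n=-1$, consistent with case~(1)). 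Matching this to the ansatz $N_{\ell/k}(\bar t)^{n/2}[(m_{n/2}-m'_{n/2})+(m_{(n+q-1)/2}-m'_{(n+q-1)/2})\eta(\bar t)]$ at both square and non-square $\bar t$ forces $m_b-m'_b=(-1)^\alpha$ for each $b$, producing the asserted $(q^{a-1}\pm 1)/2$ multiplicities.

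The main obstacle is the pro-$p$ fixed-point calculation: once properly set up, the filtration argument with $-2$ invertible is clean given $p>2$, but identifying the correct filtration of $A$ and verifying the $\phi$-action on each graded piece requires attention to the fact that $\varpi_D$-conjugation is trivial on $\O_L\subset\O_D$ (as $L$ is commutative) while acting as the Frobenius on the unramified subalgebra $W(\ell)\subset\O_D$.
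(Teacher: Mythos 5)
Your proposal is correct, but it is organized quite differently from the paper's proof. The paper inducts in stages: it first reduces $\ind_{L^{\times}U_D^a}^{L^{\times}U_D^1}\psi$ modulo $p$ (the index is $q^{a-1}$, and the semisimplified reduction consists of $\tfrac{q^{a-1}+1}{2}$ copies of $\bar{\psi}$ and $\tfrac{q^{a-1}-1}{2}$ copies of $\xi\bar{\psi}$, following \cite{BD}), and then computes $\ind_{\Delta}^{\Gamma_K}$ of these two characters of $\Delta$ by Brauer characters; the delicate $\pm 1$ imbalance is thus absorbed into the intermediate-stage reduction. You instead compute the full Brauer character of $\bar{\theta}$ on $\Gamma_K$ directly from the Mackey/Frobenius formula, and the same imbalance reappears as the character value on the coset $\iota\ell^{\times}$, which you reduce to a fixed-point count on $A=U_D^1/U_L^1U_D^a$ under $\mathrm{Ad}(\varpi_D)$. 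Your key claims check out: the restriction to $\ell^{\times}$ (value $(q+1)q^{a-1}\bar{t}^{\,n}$ on $k^{\times}$, $0$ elsewhere) pins down everything except the $\xi$-splitting of the one-dimensional constituents; the only $\phi$-fixed coset of $A$ is the trivial one (your filtration has graded pieces $\ell/k$ on which Frobenius acts as $-1$, and the successive approximation goes through even though $A$ is only a coset space, $U_L^1U_D^a$ being non-normal in $U_D^1$); and the two surviving cosets contribute $(-1)^{\alpha}(\bar{t}\,\bar{\eta}_i^{\,q-1})^n$ with $\bar{\eta}_1^{\,q-1}=-\bar{\eta}_2^{\,q-1}$, giving $0$ for $n$ odd and $2(-1)^{\alpha}N_{\ell/k}(\bar{t})^{n/2}$ for $n$ even and $\bar{t}$ a square, which is exactly the character of the asserted decomposition (the two-dimensional constituents vanish on $\iota\ell^{\times}$ as they are induced from $\ell^{\times}$). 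One step worth making explicit in a write-up: passing from the condition on $g=\eta u$ to ``the class of $u$ in $A$ is $\phi$-fixed'' uses that $\eta^{-1}t'\phi(\eta)$ is the Teichmüller lift of an element of $k^{\times}$ and hence central, so the twist decouples. As for what each route buys: the paper's argument is shorter because the intermediate reduction can be quoted from \cite{BD}, while yours is self-contained and makes transparent both where $p>2$ enters and why the two $\xi$-eigenspaces differ by exactly one copy (a single fixed coset).
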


\begin{proof}
We proceed as in \cite{BD}, Section $4$. 
We have that
$[L^{\times}U^1_D:L^{\times}U^a_D]= q^{a-1}$ (note that the essential
conductor of $\theta$ is $2a+1$).
The reduction modulo $p$ of 
$\ind_{L^{\times}U_D^a}^{L^{\times}U_D^1}\psi$ is 
the sum of $\frac{q^{a-1}+1}{2}$ copies of $\bar{\psi}$
and of $\frac{q^{a-1}-1}{2}$ copies of
$\xi\bar{\psi}$.
Let $\mu$ be a smooth character of $L^{\times}U_D^1$ in characteristic $p$ with
$\mu(\varpi_K)=1$, then $\ind_{L^{\times}U_D^1}^{\G_{\varpi_K}}\mu$
factors through $\Gamma_K$, and the representation of $\Gamma_K$ that we
obtain is $\ind_{\Delta}^{\Gamma_K}\mu$,
which can be computed via Brauer characters.
\end{proof}

It follows from Proposition \ref{classredirr} that
Proposition \ref{reductypeirr} gives $\bar\sigmaG(\t)$
when $\t$ is of type $\irr$ under some compatibility condition between
$\t$ and $\varpi_K$. As we will see in Paragraph \ref{statements} this
compatibility condition is harmless. When $\t$ is scalar,
$\bar\sigmaG(\t)$ is easy to compute as $\sigmaG(\t)$ is of dimension $1$. 
The value of $\bar\sigmaG(\t)$ when $\t$ is of the form $\red$
could be immediately obtained from Proposition 4.6 of \cite{BD}: as
$\bar{\sigmaG}(\t) = \xi\bar{\sigmaG}(\t)$, it is entirely determined by
its restriction to $\ell^{\times}$.
We do not give details as they are not really needed. Indeed, our goal in
computing $\bar\sigmaG(\t)$ is to allow us to compute the multiplicity
of some deformation rings as we shall see in Theorems \ref{mainQp} and
\ref{mainK}, but for $\t$ of the form $\red$ this multiplicity can be
computed by the formula coming from the Breuil-Mézard conjecture for
$\GL_2$.
Note that complete results and computations can be found in
\cite{Tok}.

\subsection{A reformulation of a result of Gee and Geraghty}
\label{reformulation}

Let $\Gamma = \Gamma_K$. We fix $\varpi_K$ a uniformizer of $K$.
We denote by $w_0$ the Hodge-Tate type $(0,0)_{\tau\in
\Hom(K,\bar{\Q}_p)}$.

In the case $K = \Q_p$, let $w = (n,m)$ be a Hodge-Tate type. 
We set $|w| = n+2m$.
We define a representation
$\sigma_{w} = \Sym^{n}\otimes\det^{m}$ of $\GL_2(\bar{\Q}_p)$, hence of
$\G = \G_{\varpi_{\Q_p}}$ via a realization of $\G$ as in Paragraph
\ref{realization}. 
In particular $\sigma_{w_0}$ is the trivial representation of $\G$.
The isomorphism class of the restriction of $\sigma_w$ to $\O_D^{\times}$
does not depend on the particular choice of a realization, as they are
all conjugate in restriction to $\O_D^{\times}$.  
We can see the reduction modulo $p$ of $\sigma_w$ as a representation of
$\Gamma$ by restriction. We denote its image in $\R(\Gamma)$ by
$\bar{\sigma_w}$, it does not depend on any choices made (including
$\varpi_K$): indeed it is the restriction to $\Gamma$ of the
representation $\Sym^{n}\otimes\det^{m}$ of $\GL_2(\F)$ via any embedding
of $\Gamma$ into $\GL_2(\F)$, and all such embeddings are conjugate.

We denote by $\pi$ a uniformizer of the field $E$ of Paragraph
\ref{defring}. For any noetherian local
ring $A$, we denote by $e(A)$ the Hilbert-Samuel multiplicity $e(A,A)$
(see \cite{Mat} for the definition of the Hilbert-Samuel multiplicity,
and also \cite{Kis09a}, Section 1.3 for properties relevant to our
situation).

Let $\ell = \F_{q^2}$, and let $\R(\ell^\times)$ be the Grothendieck ring of
representations of $\ell^\times$ with coefficients in $\bar{\F}$.

For $K = \Q_p$, we recall the following result
(Corollary 5.7 of \cite{GG}), 
which is the consequence of the main result of
\cite{GG} and the usual formulation of the Breuil-Mézard conjecture
proved in \cite{Kis09a}, \cite{Pas} and \cite{HT15}.
Here $\sigma_D(\t)$ is, as in
Paragraph \ref{typeD}, a choice of irreducible sub-representation of the
restriction of $\pi_{\t}$ to $\O_D^{\times}$:

\begin{theo}
\label{BMquatQp}
Let $\bar{\rho} : G_{\Q_p} \to \GL_2(\F)$, and suppose that $p \geq 5$ if 
$\bar\rho$ is a twist of an extension of the trivial
representation by the cyclotomic character.
There exists a positive linear functional $i_{D,\bar{\rho}} : \R(\ell^{\times})
\to \Z$ such that for each discrete series inertial type $\t$, and each
choice of $\sigma_D(\t)$, we have
$e(R^{\square,\psi}(w,\t^{ds},\bar{\rho})/\pi) =
i_{D,\bar{\rho}}([\bar{\sigma_D}(\t)\otimes\bar{\sigma_w}|_{\ell^{\times}}])$.
\end{theo}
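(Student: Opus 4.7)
The plan is to combine two inputs: the usual Breuil--Mézard conjecture for $\GL_2(\Q_p)$ on the $\GL_2$--side (proved by Kisin, Paškūnas, Hu--Tan), and the transfer of multiplicities to the quaternionic side carried out by Gee and Geraghty in \cite{GG}. First I would recall that for $K=\Q_p$ and any inertial type $\t$ there is a smooth representation $\sigma(\t)$ of $\GL_2(\Z_p)$ satisfying the type condition of Henniart's appendix to \cite{BM}. The classical Breuil--Mézard theorem furnishes a positive linear functional $\mu_{\bar\rho}$ on the Grothendieck group of mod--$p$ representations of $\GL_2(\F_p)$ such that
\[
e(R^{\square,\psi}(w,\t,\bar\rho)/\pi) = \mu_{\bar\rho}\bigl([\bar\sigma(\t)\otimes\bar\sigma_w]\bigr),
\]
valid under the hypothesis $p\geq 5$ in the exceptional twist-of-extension case (this is where the assumption on $\bar\rho$ enters: outside that case $p>2$ suffices).

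Next I would invoke the main result of \cite{GG}. Using a definite quaternion algebra $B$ over a totally real field that is ramified at infinity and at the primes above $p$, and whose local completion at some place above $p$ is $D$, a standard Taylor--Wiles--Kisin patching argument produces a patched module $M_\infty$ over a big deformation ring. Gee--Geraghty show that the support of $M_\infty$, when one restricts to the discrete-series locus, picks out exactly the ring $R^{\square,\psi}(w,\t^{ds},\bar\rho)$, and that the Hilbert--Samuel multiplicity of its special fibre equals the dimension of the relevant mod--$p$ automorphic form space of Serre weight $\bar\sigma_D(\t)\otimes\bar\sigma_w|_{\O_D^\times}$. Combined with the Breuil--Mézard formula on the $\GL_2$--side and the compatibility of Jacquet--Langlands with types (Proposition \ref{typesD}), this produces, for each irreducible mod--$p$ representation $\tau$ of $\O_D^\times$, a non-negative integer $n_\tau(\bar\rho)$ with
\[
e(R^{\square,\psi}(w,\t^{ds},\bar\rho)/\pi) \;=\; \sum_{\tau} n_\tau(\bar\rho)\cdot\bigl[\bar\sigma_D(\t)\otimes\bar\sigma_w|_{\O_D^\times}:\tau\bigr].
\]

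To upgrade this to a functional on $\R(\ell^\times)$ I would use the fact, noted in Paragraph \ref{GammaK}, that $1+\varpi_D\O_D$ is pro-$p$, so every semi-simple mod--$p$ representation of $\O_D^\times$ factors through $\ell^\times=\O_D^\times/(1+\varpi_D\O_D)$. Since $\ell^\times$ is abelian of order prime to $p$, its irreducible mod-$p$ representations are one-dimensional characters, and any irreducible $\tau$ of $\O_D^\times$ lifts a sum of such characters. Setting $i_{D,\bar\rho}([\chi]) := n_\tau(\bar\rho)/\dim\tau$ for $\chi$ a constituent of $\tau|_{\ell^\times}$ (and checking well-definedness via the Galois-conjugation symmetry relating constituents of a single $\tau$) gives the desired positive linear functional on $\R(\ell^\times)$, and the displayed formula rewrites as the one in the statement.

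The main obstacle is the compatibility step: one must check that the multiplicity appearing in the Gee--Geraghty formula depends only on the image of $\bar\sigma_D(\t)\otimes\bar\sigma_w$ in $\R(\ell^\times)$ (not on the $\O_D^\times$-structure), and that the resulting $i_{D,\bar\rho}$ is independent of $(\t,w,\psi)$. Independence is forced by fixing generic choices of $(w,\t)$ isolating each single Serre weight of $\ell^\times$, much as in the classical Breuil--Mézard argument; positivity is inherited from the fact that $n_\tau(\bar\rho)$ counts the rank of a patched module summand. The residual twist-exceptional case is exactly where the $\GL_2$--side of Breuil--Mézard is known only for $p\geq 5$ by \cite{Pas}, \cite{HT15}, which is why that hypothesis is imposed.
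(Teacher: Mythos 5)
The paper does not actually prove Theorem \ref{BMquatQp}; it is stated as a direct citation of Corollary 5.7 of Gee--Geraghty, which in turn is obtained by combining the main patching result of that paper with the classical Breuil--M\'ezard theorem for $\GL_2(\Q_p)$. Your proposal is therefore a sketch of that cited argument rather than a new route, and its overall skeleton (classical Breuil--M\'ezard on the $\GL_2$ side, Jacquet--Langlands plus a Taylor--Wiles--Kisin patched module on the quaternionic side, with the $p\geq 5$ hypothesis entering exactly where \cite{Pas} and \cite{HT15} are needed) is the right one.

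However, your final ``upgrade to $\R(\ell^\times)$'' step is both incorrect as written and unnecessary. Since $1+\varpi_D\O_D$ is a pro-$p$ normal subgroup of $\O_D^\times$ with quotient $\O_D^\times/(1+\varpi_D\O_D)\simeq\ell^\times$ abelian of order prime to $p$, every irreducible mod-$p$ representation of $\O_D^\times$ is a \emph{one-dimensional} character of $\ell^\times$ --- not a higher-dimensional representation whose restriction to $\ell^\times$ decomposes as a Galois-conjugacy class of characters. There is consequently no $\dim\tau$ to divide by, no well-definedness to verify, and no lifting to arrange: the Jordan--H\"older constituents of $\bar\sigma_D(\t)\otimes\bar\sigma_w|_{\O_D^\times}$ already lie in $\R(\ell^\times)$, so the functional $i_{D,\bar\rho}$ produced by the patched-module argument lives on $\R(\ell^\times)$ from the start. (This is precisely the simplification the paper emphasises after Theorems \ref{BMquatbisQp} and \ref{BMquatbisK}: quaternionic Serre weights are characters of $\ell^\times$, which is what makes the $D^\times$ picture cleaner than the $\GL_2$ one.) If you drop the spurious division and the appeal to Galois conjugation, the remainder of your sketch matches what the cited result does.
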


We return to the case of a general $K$.
We have the following well-known result (see for example \cite{GS}, Lemma
3.5):

\begin{prop}
\label{ordinary}
Let $\bar{\rho}$ be a continuous representation of $G_{K}$ of dimension $2$
with coefficients in $\F$.
Suppose that $\bar{\rho}$ has a potentially semi-stable lift with scalar
type $\t = \psi\oplus\psi$ and Hodge-Tate weights
$(0,1)_{\tau\in\Hom(K,\bar{\Q}_p)}$ which is not
potentially crystalline. Then $\bar{\rho}$ is an unramified twist of
$\smat {\omega}*01 \otimes \bar{\psi}$.  
\end{prop}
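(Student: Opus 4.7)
The plan is to twist $\rho$ to a representation that is semi-stable over $K$ and then apply the standard ordinary structure for $2$-dimensional semi-stable non-crystalline lifts. First I would use that $\psi|_{I_K}$ is smooth, hence of finite image, to extend it to a finite-order character $\tilde\psi : G_K \to \O_E^\times$ (pick any root of unity as the value on a Frobenius). Setting $\rho' := \rho \otimes \tilde\psi^{-1}$, the inertial type becomes trivial, so $\rho'$ is semi-stable (not merely potentially) over $K$, remains non-crystalline (since $N\neq 0$ is preserved), and still has parallel Hodge-Tate weights $(0,1)$ because $\tilde\psi$ has Hodge-Tate weight $0$ at each embedding.

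Next I would invoke the standard fact that a $2$-dimensional semi-stable non-crystalline representation is reducible: inside $D_{st}(\rho')$, the submodule $\ker N$ is automatically an admissible sub-$(\phi,N)$-module — weak admissibility forces the inequality $t_H \leq t_N$ on $\ker N$ and the complementary inequality on the quotient, so both are equalities — which produces a short exact sequence of Galois representations $0 \to \chi_1 \to \rho' \to \chi_2 \to 0$ with $\chi_1, \chi_2$ crystalline characters of $G_K$. With the Hodge filtration convention (verifiable on the Tate curve) that $\ker N$ carries the higher Hodge filtration step, together with the parallel Hodge-Tate weights $(0,1)$, the character $\chi_1$ has HT weight $1$ and $\chi_2$ has weight $0$ at every embedding; hence $\chi_1 = \unr(\lambda_1)\eps$ and $\chi_2 = \unr(\lambda_2)$ for some $\lambda_1,\lambda_2 \in \O_E^\times$.

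The crux is the equality $\lambda_1 = \lambda_2$. For this I would compute $\WD(\rho') = (\unr(\nu^{-1}) \oplus \unr(\nu^{-1})\|\cdot\|,\, N)$, where $\unr(\nu^{-1}) = \psi\tilde\psi^{-1}$ is unramified because $\psi$ and $\tilde\psi$ agree on inertia. Identifying which summand of $\WD(\rho')$ is killed by $N$ with $\WD(\chi_1)$ (via the fact that $D_{st}$ preserves subobjects), I would match $\WD(\chi_1) = \unr(\nu^{-1})\|\cdot\|$ and $\WD(\chi_2) = \unr(\nu^{-1})$; using $\WD(\eps) = \|\cdot\|$ this forces $\lambda_1 = \lambda_2 = \nu^{-1}$. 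Thus $\rho' = \unr(\nu^{-1}) \cdot \smat{\eps}{*}{0}{1}$, and after reducing modulo $p$ and tensoring back with $\tilde\psi$, $\bar\rho$ is an unramified twist of $\smat{\omega}{*}{0}{1} \otimes \bar\psi$, the remaining freedom in the choice of lift $\bar\psi$ of $\psi|_{I_K}$ being absorbed into the unramified twist. The hardest part is this Weil-Deligne identification $\lambda_1 = \lambda_2$: without the careful matching of $\ker N$ on both sides, one would obtain only a semi-simple mod-$p$ form with possibly distinct unramified twists on the two diagonal entries, rather than the precise upper-triangular shape claimed.
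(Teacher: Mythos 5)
The paper itself gives no argument for this Proposition: it is quoted as well known, with a pointer to \cite[Lemma 3.5]{GS}. Your proposal is therefore judged as a self-contained proof, and its overall route is the standard one: twist by a finite-order extension of $\psi|_{I_K}$ to reduce to a semi-stable, non-crystalline lift with parallel weights $(0,1)$; exhibit it as an extension of crystalline characters using the line $\ker N$ of its filtered $(\phi,N)$-module; and use the Weil--Deligne representation (with $\WD(\eps)=\|\cdot\\|$ and the scalar type, which forces $r\simeq \chi\oplus\chi\|\cdot\|$) to see that the two unramified parameters coincide, so that $\rho'$ is an unramified twist of an extension of $1$ by $\eps$. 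Your identification of $\ker N$ with the $\chi\|\cdot\|$-line of $\WD(\rho')$, and the final lattice/reduction step, are correct.

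There is, however, one step whose justification as written is not valid, and it is the crux: the claim that $\ker N$ is \emph{automatically} an admissible sub-$(\phi,N)$-module because ``weak admissibility forces $t_H\le t_N$ on $\ker N$ and the complementary inequality on the quotient, so both are equalities.'' Those two inequalities hold for every rank-one subobject of every weakly admissible module (by additivity of $t_H$ and $t_N$ they are equivalent to each other), and they never imply equality by themselves; if they did, every two-dimensional semi-stable non-crystalline representation would be reducible, which is false. Concretely, for Hodge--Tate weights $(0,k-1)$ with $k>2$ the Newton number of $\ker N$ lies strictly between the two possible induced Hodge numbers, so $\ker N$ is not admissible and the representation is irreducible --- this is exactly why the Proposition is special to weights $(0,1)$. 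The repair is short but must use the weights: since the Frobenius eigenvalues on $\ker N$ and on $D/\ker N$ differ by $p$ (from $N\phi=p\phi N$; for general $K$ note also that Frobenius permutes the factors of $K_0\otimes_{\Q_p}E$ transitively, so $N$ is nonzero on each factor and $\ker N$ is free of rank one), the equality $t_H(D)=t_N(D)$ pins the Newton number of $\ker N$ to the extreme value allowed for the Hodge number of a line (for $K=\Q_p$ in the paper's normalization, $2v(\lambda)+1=-1$, so $v(\lambda)=-1$); the inequality $t_H(\ker N)\le t_N(\ker N)$ then forces equality, so $\ker N$ is admissible, with induced jump in degree $-1$, i.e.\ the subcharacter has Hodge--Tate weight $1$. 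This also clarifies your convention-dependent phrase about $\ker N$ ``carrying the higher Hodge filtration step'': with the paper's conventions what weak admissibility actually forces is $\Fil^0(D_K)\cap(\ker N)_K=0$; the correct, Tate-curve-verified statement is simply that the subcharacter is the one of weight $1$. With this step fixed, your proof goes through.
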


We deduce from this that when $\bar{\rho}$ is not a twist of
an extension of the trivial character by the cyclotomic character,
$R^{\square,\psi}(w_0,\t^{ds},\bar{\rho})
= R^{\square,\psi}_{\text{cr}}(w_0,\t^{ds},\bar{\rho})$
for any discrete series type $\t$, where the second ring
parametrizes only representations that are potentially crystalline.
Hence we can deduce from the main result of \cite{GG} and  \cite[Theorem
A]{GK} the
following:

\begin{theo}
\label{BMquatK}
Let $\bar{\rho} : G_{K} \to \GL_2(\F)$ a continuous representation that
is not a twist of an extension of the trivial
representation by the cyclotomic character.
There exists a positive linear functional $i_{D,\bar{\rho}} : \R(\ell^{\times})
\to \Z$ such that for each discrete series inertial type $\t$, and each
choice of $\sigma_D(\t)$, we have
$e(R^{\square,\psi}(w_0,\t^{ds},\bar{\rho})/\pi) =
i_{D,\bar{\rho}}([\bar{\sigma_D}(\t)])$.
\end{theo}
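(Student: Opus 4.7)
The plan is to combine \cite[Theorem A]{GK} with the automorphic dictionary of \cite{GG}, after first reducing to a potentially crystalline situation.

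First I would identify $R^{\square,\psi}(w_0,\t^{ds},\bar\rho)$ with its potentially crystalline quotient $R^{\square,\psi}_{\text{cr}}(w_0,\t^{ds},\bar\rho)$. For $\t$ of form $\red$ or $\irr$ this replacement is automatic, since the monodromy operator vanishes on every potentially semi-stable representation of such inertial type (only scalar types admit a non-trivial $N$ in dimension $2$). For scalar $\t$, Proposition \ref{ordinary} combined with our hypothesis on $\bar\rho$ forces $R^{\square,\psi}(w_0,\t^{ds},\bar\rho) = 0$: any closed point corresponding to a non-crystalline lift would produce a representation with the excluded shape, and by construction $R^{\square,\psi}(w_0,\t^{ds},\bar\rho)$ is supported only on components containing such a point. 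Thus in all cases the multiplicity to compute equals $e(R^{\square,\psi}_{\text{cr}}(w_0,\t,\bar\rho)/\pi)$, with the empty case contributing $0$.

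Second I would apply \cite[Theorem A]{GK}, which establishes the geometric Breuil-Mézard conjecture for potentially crystalline deformation rings of Hodge-Tate weight $w_0$ over arbitrary $K$. This furnishes a positive linear functional $i_{\GL_2,\bar\rho}$ on the Grothendieck ring of mod $p$ representations of $\GL_2(k)$ such that
\[
e(R^{\square,\psi}_{\text{cr}}(w_0,\t,\bar\rho)/\pi) = i_{\GL_2,\bar\rho}([\bar{\sigma}(\t)]),
\]
where $\sigma(\t)$ is the smooth $\GL_2(\O_K)$-type attached to $\t$ by the inertial local Langlands correspondence for $\GL_2$. Third, I would invoke the main result of \cite{GG}, which provides a precise compatibility between the Serre weights appearing in $\bar{\sigma}(\t)$ on the $\GL_2$ side and those appearing in $\bar{\sigma_D}(\t)$ on the $D^{\times}$ side, with matching multiplicities governed by the Jacquet-Langlands correspondence. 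This dictionary converts the functional $i_{\GL_2,\bar\rho}$ into a positive linear functional $i_{D,\bar\rho} : \R(\ell^{\times}) \to \Z$ satisfying the desired formula. Independence from the choice of $\sigma_D(\t)$ in the $\red$ case follows from the fact that the two possible choices differ only by conjugation by $\varpi_D$, which preserves the restriction to $\ell^{\times}$.

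The main obstacle is the third step: ensuring that the Gee-Geraghty reformulation, set up alongside the Breuil-Mézard conjecture for $\GL_2(\Q_p)$ in \cite[Corollary 5.7]{GG}, still applies uniformly for general $K$ in the Hodge-Tate weight $w_0$ setting. This is guaranteed because \cite[Theorem A]{GK} produces a bona fide geometric Breuil-Mézard statement whose cycle structure matches exactly the automorphic multiplicities manipulated in \cite{GG}; once the potentially crystalline reduction of the first step is in place, the transfer from $\GL_2(\O_K)$-types to $\O_D^{\times}$-types is formal.
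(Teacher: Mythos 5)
Your proposal is correct and follows essentially the same route as the paper: the paper likewise uses Proposition \ref{ordinary} to identify $R^{\square,\psi}(w_0,\t^{ds},\bar{\rho})$ with a potentially crystalline ring (your case analysis, including the vanishing of the ring for scalar $\t$, just spells this out), and then deduces the statement by combining the main result of \cite{GG} with \cite[Theorem A]{GK}. The only caveat is that the transfer from $\GL_2(\O_K)$-types to $\O_D^{\times}$-types is not ``formal'' but is exactly the content of the main theorem of \cite{GG}, which you do cite, so nothing is missing.
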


Let $d_{\t} = 1$ if $\t$ has the form $\scal$ or $\irr$, and $d_{\t}=2$ if 
$\t$ has the form $\red$, so that $d_{\t}$ is the number of irreducible
components of $\sigmaG(\t)|_{\O_D^{\times}}$. Then we can give a reformulation
of Theorems \ref{BMquatQp} and \ref{BMquatK} in terms of representations of $\Gamma$:

\begin{theo}
\label{BMquatbisQp}
Let $\bar{\rho} : G_{\Q_p} \to \GL_2(\F)$, and suppose that $p \geq 5$ if 
$\bar\rho$ is a twist of an extension of the trivial
representation by the cyclotomic character.
There exists a positive linear functional $i_{\bar{\rho}} : \R(\Gamma) \to
\Z$ such that for each discrete series inertial type $\t$ and each choice of
$\sigmaG(\t)$ we have
$d_{\t}e(R^{\square,\psi}(w,\t^{ds},\bar{\rho})/\pi) =
i_{\bar{\rho}}([\bar{\sigmaG}(\t)\otimes\bar{\sigma_w}])$.
\end{theo}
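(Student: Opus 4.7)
The plan is to construct $i_{\bar\rho}$ by pulling back $i_{D,\bar\rho}$ of Theorem \ref{BMquatQp} along the restriction map $\R(\Gamma) \to \R(\ell^\times)$ induced by the inclusion $\ell^\times \hookrightarrow \Gamma$. Concretely I would set $i_{\bar\rho}([\sigma]) := i_{D,\bar\rho}([\sigma|_{\ell^\times}])$; this is automatically positive and linear, since restriction is a ring homomorphism preserving effectivity and $i_{D,\bar\rho}$ has those properties by Theorem \ref{BMquatQp}. The entire content of the statement is then that this choice satisfies the displayed formula, which I would verify by splitting according to the type of $\t$.

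When $\t$ is of the form $\scal$ or $\irr$, one has $d_\t = 1$, and by the discussion in Paragraph \ref{GpiK} the restriction $\bar\sigmaG(\t)|_{\O_D^\times}$ is isomorphic to the irreducible representation $\bar\sigma_D(\t)$. Restricting further to $\ell^\times$ thus yields $\bar\sigmaG(\t)|_{\ell^\times} = \bar\sigma_D(\t)|_{\ell^\times}$, so that
\[
i_{\bar\rho}([\bar\sigmaG(\t)\otimes\bar\sigma_w]) = i_{D,\bar\rho}([\bar\sigma_D(\t)\otimes\bar\sigma_w|_{\ell^\times}]) = e(R^{\square,\psi}(w,\t^{ds},\bar\rho)/\pi)
\]
by Theorem \ref{BMquatQp}, and this equals $d_\t\cdot e(R^{\square,\psi}(w,\t^{ds},\bar\rho)/\pi)$.

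When $\t$ is of the form $\red$, one has $d_\t = 2$ and $\bar\sigmaG(\t)|_{\O_D^\times} \cong \bar\sigma_D(\t)\oplus\bar\sigma_D(\t)^{\varpi_D}$. The crucial observation is that $\bar\sigma_w|_{\O_D^\times}$ is invariant (up to isomorphism) under conjugation by $\varpi_D$, since $\sigma_w$ extends to a representation of the larger group $\G$. Consequently, in $\R(\ell^\times)$,
\[
[(\bar\sigmaG(\t)\otimes\bar\sigma_w)|_{\ell^\times}] = [\bar\sigma_D(\t)\otimes\bar\sigma_w|_{\ell^\times}] + [\bar\sigma_D(\t)^{\varpi_D}\otimes\bar\sigma_w|_{\ell^\times}].
\]
Since Theorem \ref{BMquatQp} holds for \emph{each} choice of irreducible sub-representation of $\pi_\t|_{\O_D^\times}$, both $\sigma_D(\t)$ and $\sigma_D(\t)^{\varpi_D}$ are valid choices and give the same value $e(R^{\square,\psi}(w,\t^{ds},\bar\rho)/\pi)$ on the right-hand side. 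Adding the two contributions yields $2\cdot e(R^{\square,\psi}(w,\t^{ds},\bar\rho)/\pi) = d_\t\cdot e(R^{\square,\psi}(w,\t^{ds},\bar\rho)/\pi)$, as required.

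The only step requiring genuine verification is the invariance of $\bar\sigma_w|_{\O_D^\times}$ under conjugation by $\varpi_D$ used in the $\red$ case; this is unproblematic because it follows tautologically from the fact that $\sigma_w$ is defined as a representation of all of $\G$, so that $\iota$ intertwines the restriction with its $\varpi_D$-conjugate. Everything else is a direct bookkeeping translation of Theorem \ref{BMquatQp} through the restriction $\R(\Gamma) \to \R(\ell^\times)$.
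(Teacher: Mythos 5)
Your proposal is correct and is essentially the paper's own proof: the paper likewise defines $i_{\bar{\rho}}([\gamma]) = i_{D,\bar{\rho}}([\gamma|_{\ell^{\times}}])$ and notes that $d_{\t}e(R^{\square,\psi}(w,\t^{ds},\bar{\rho})/\pi) = i_{D,\bar{\rho}}([\bar{\sigmaG}(\t)\otimes\bar{\sigma_w}|_{\ell^{\times}}])$ follows from Theorem \ref{BMquatQp} and the definition of $\sigmaG(\t)$. Your case split between $d_{\t}=1$ and $d_{\t}=2$ (using that Theorem \ref{BMquatQp} holds for each choice of $\sigma_D(\t)$) just makes explicit the verification the paper leaves implicit.
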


\begin{theo}
\label{BMquatbisK}
Let $\bar{\rho} : G_{K} \to \GL_2(\F)$ a continuous representation that
is not a twist of an extension of the trivial
representation by the cyclotomic character.
There exists a positive linear functional $i_{\bar{\rho}} : \R(\Gamma) \to
\Z$ such that for each discrete series inertial type $\t$ and each choice of
$\sigmaG(\t)$ we have
$d_{\t}e(R^{\square,\psi}(w_0,\t^{ds},\bar{\rho})/\pi) =
i_{\bar{\rho}}([\bar{\sigmaG}(\t)])$.
\end{theo}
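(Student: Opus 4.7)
My plan is to build $i_{\bar{\rho}}: \R(\Gamma) \to \Z$ by pulling back the functional $i_{D,\bar{\rho}}$ provided by Theorem \ref{BMquatK} along the ring homomorphism $\R(\Gamma) \to \R(\ell^{\times})$ given by restriction of representations from $\Gamma$ to its subgroup $\ell^{\times}$. Positivity will then come for free: restriction sends genuine representations to genuine representations, so the positive cone of $\R(\Gamma)$ is mapped into that of $\R(\ell^{\times})$, on which $i_{D,\bar{\rho}}$ is positive by hypothesis.

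To verify the multiplicity identity I would split on the type of $\t$, using the description of $\sigmaG(\t)|_{\O_D^{\times}}$ recalled in Paragraph \ref{GpiK}. When $\t$ is of the form $\scal$ or $\irr$, $d_{\t} = 1$ and $\sigmaG(\t)|_{\O_D^{\times}} \simeq \sigma_D(\t)$; passing to semi-simple reductions modulo $p$ and restricting further to $\ell^{\times}$ gives $\bar{\sigmaG}(\t)|_{\ell^{\times}} = \bar{\sigma_D}(\t)$ in $\R(\ell^{\times})$, and Theorem \ref{BMquatK} yields the identity on the nose. When $\t$ is of the form $\red$, $d_{\t} = 2$ and $\sigmaG(\t)|_{\O_D^{\times}}$ decomposes as $\sigma_D(\t) \oplus \sigma_D(\t)^{\varpi_D}$, whose two summands are precisely the two admissible choices for $\sigma_D(\t)$ in Theorem \ref{BMquatK}. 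Applying that theorem separately to each choice and summing then produces
\[
i_{\bar{\rho}}([\bar{\sigmaG}(\t)]) = 2\, e(R^{\square,\psi}(w_0,\t^{ds},\bar{\rho})/\pi),
\]
as desired.

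The only step that is not a formality is showing, in the $\red$ case, that both irreducible constituents of $\pi_{\t}|_{\O_D^{\times}}$ yield the same value of $i_{D,\bar{\rho}}$. This is exactly the "for each choice of $\sigma_D(\t)$" clause of Theorem \ref{BMquatK}, so after invoking that clause the argument collapses to bookkeeping. The factor $d_{\t}$ on the left of the statement is thus precisely what is needed to absorb the number of irreducible components of $\sigmaG(\t)|_{\O_D^{\times}}$ picked up upon restriction from $\O_D^{\times}$ to $\ell^{\times}$.
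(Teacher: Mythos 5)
Your proposal is correct and is essentially the paper's own proof: the paper also defines $i_{\bar{\rho}}([\gamma]) = i_{D,\bar{\rho}}([\gamma|_{\ell^{\times}}])$ and deduces the identity from Theorem \ref{BMquatK} together with the decomposition of $\sigmaG(\t)|_{\O_D^{\times}}$ into $d_{\t}$ irreducible pieces, each a valid choice of $\sigma_D(\t)$. You have merely spelled out the case split on the type of $\t$ that the paper leaves implicit in the phrase ``and the definition of $\sigmaG(\t)$.''
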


\begin{proof}[Proof of Theorems \ref{BMquatbisQp} and \ref{BMquatbisK}]
It follows from Theorems \ref{BMquatQp} (resp. \ref{BMquatK}) 
and the definition of $\sigmaG(\t)$
that we have the equality
$d_{\t}e(R^{\square,\psi}(w,\t^{ds},\bar{\rho})/\pi) =
i_{D,\bar{\rho}}([\bar{\sigmaG}(\t)\otimes\bar{\sigma_w}|_{\ell^{\times}}])$.
Set $i_{\bar{\rho}}([\gamma]) = i_{D,\bar{\rho}}([\gamma|_{\ell^{\times}}])$ for all
irreducible representations $\gamma$ of $\Gamma$ to get the result.
\end{proof}

In particular, we observe that $i_{\bar{\rho}}([\gamma]) =
i_{\bar{\rho}}([\xi\gamma]) = 
(\dim \gamma)e(R^{\square,\psi}(w_0,\t_{\gamma}^{ds},\bar{\rho})/\pi)$ for any
irreducible representation $\gamma$ of $\Gamma$, where $\t_{\gamma}$ is
the inertial type defined in the proof of Proposition \ref{liftistype}.

We denote by $\WG(\bar{\rho})$ the set of $\gamma$ such that
$i_{\bar{\rho}}([\gamma]) \neq 0$.  This is the translation in the setting
of representations of $\Gamma$ of the (predicted) quaternionic Serre
weights of \cite{GS}. Note in particular that, as in \cite{GS}, the set
$\WG(\bar{\rho})$ is determined by the existence of certain lifts of
$\bar{\rho}$ that have all their Hodge-Tate weights equal to $(0,1)$, 
which makes the situation with quaternion algebras simpler than the
situation of Serre weights for $\GL_2$, since, for $\GL_2$, one cannot in
general lift a Serre weight as a type in characteristic $0$.

\subsection{Multiplicity formulas}
\label{statements}

We now state our main theorems for the multiplicity of the special fiber
of the discrete series extended type deformation rings, which we prove in
Section \ref{proof}. 

\begin{theo}
\label{mainQp}
Let $\bar{\rho}$ be a continuous representation of $G_{\Q_p}$ of dimension $2$
with coefficients in $\F$.
Suppose that $p \geq 5$ if $\bar{\rho}$ is a twist of an extension of
the trivial character by the cyclotomic character.
Let $\bar{\psi} = \omega^{-1}\det\bar{\rho}$, $\varpi_{\Q_p}$ a
uniformizer of $\Q_p$ and $\alpha$ a square root of
$\bar{\psi}(\varpi_{\Q_p})^{-1}$.

There exists a positive linear form $\mu_{\bar{\rho}}$ on $\R(\Gamma)$
with values in $\Z$ satisfying the following property:
for any discrete series inertial type $\t$, Hodge-Tate type $w$,
character $\psi$ lifting $\bar{\psi}$ compatible with $\t$ and $w$, 
and $\t^+$ a discrete series
extended type compatible with $(\t,\psi)$ we have:
\[
e(R^{\square,\psi}(w,\t^+,\bar{\rho})/\pi) =
\mu_{\bar{\rho}}([\bar{\sigmaG}(\t)\otimes\bar{\sigma_{w}}])
\]
for the choice of representation $\sigmaG(\t)$ of $\G_{\varpi_{\Q_p}}$ such that
$\t^+ = \LL_D(\sigmaG(\t))\otimes\unr(a\varpi_{\Q_p}^{|w|})^{-1}$ for
some $a \in \z^{\times}$ lifting $\alpha$.
\end{theo}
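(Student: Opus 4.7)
The plan is to refine the patching argument of \cite{GG} to track extended types, by using a richer coefficient system at the distinguished $p$-adic place of an auxiliary quaternion algebra. While \cite{GG} uses patched modules with coefficients $\sigma_D(\t)\otimes\sigma_w$ as a representation of $\O_D^{\times}$ to compute the multiplicity of $R^{\square,\psi}(w,\t^{ds},\bar{\rho})$, we extend these to coefficients $\sigmaG(\t)\otimes\sigma_w$ as a representation of $\G_{\varpi_{\Q_p}}$, carrying the extra action of a uniformizer of $D$; this extra action separates the two extended type components via its eigenvalues on the patched module.

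I would define $\mu_{\bar{\rho}}$ on irreducibles $\gamma$ of $\Gamma$ by
\[
\mu_{\bar{\rho}}([\gamma]) := e(R^{\square,\psi}(w_0,\t_\gamma^+,\bar{\rho})/\pi),
\]
where $\t_\gamma^+$ is the extended type compatible with $(\t_\gamma,\psi)$ such that the associated $\sigmaG(\t_\gamma)$ satisfies $\bar{\sigmaG}(\t_\gamma) = \gamma$. For $\gamma$ two-dimensional (so $\t_\gamma$ of type $\red$), there is only one compatible extended type; for $\gamma$ one-dimensional, there are two, and by additivity of Hilbert--Samuel multiplicities over components (\cite[Section 1.3]{Kis09a}) we have $\mu_{\bar{\rho}}([\gamma]) + \mu_{\bar{\rho}}([\xi\gamma]) = e(R^{\square,\psi}(w_0,\t_\gamma^{ds},\bar{\rho})/\pi) = i_{\bar{\rho}}([\gamma])$. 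Extending linearly gives a positive integer-valued linear form, not in general $\xi$-invariant.

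For $\t$ of type $\red$ the formula follows from Theorem \ref{BMquatbisQp}: since $R^{\square,\psi}(w,\t^+,\bar{\rho}) = R^{\square,\psi}(w,\t^{ds},\bar{\rho})$, $d_\t = 2$, and $\bar{\sigmaG}(\t) \cong \xi\bar{\sigmaG}(\t)$ (so the one-dimensional constituents of $\bar{\sigmaG}(\t)\otimes\bar{\sigma_w}$ pair up into $\{\gamma,\xi\gamma\}$-pairs), $\mu_{\bar{\rho}}$ applied to $[\bar{\sigmaG}(\t)\otimes\bar{\sigma_w}]$ equals half of $i_{\bar{\rho}}$ applied to the same class, matching the Breuil--M\'ezard formula. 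For $\t$ of type $\scal$ or $\irr$, the argument is a Taylor--Wiles--Kisin patching construction on algebraic modular forms for a totally definite quaternion algebra $B$ over a suitably chosen totally real field, ramified at all archimedean places and all primes above $p$, as developed in Section \ref{quatmodforms}. With coefficients $\sigmaG(\t)\otimes\sigma_w$ at the distinguished $p$-adic place (the choice matching $\t^+$), the patched module becomes a maximal Cohen--Macaulay module over $R^{\square,\psi}(w,\t^+,\bar{\rho})$ up to auxiliary power series variables, and its generic rank, computed via Jacquet--Langlands together with local-global compatibility, equals $\mu_{\bar{\rho}}([\bar{\sigmaG}(\t)\otimes\bar{\sigma_w}])$, yielding the required equality.

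The main obstacle is the case $\bar{\rho}$ reducible. When $\bar{\rho}$ is absolutely irreducible, Remark \ref{involution} provides an isomorphism between $R^{\square,\psi}(w,\t^+,\bar{\rho})$ and $R^{\square,\psi}(w,\t^{++},\bar{\rho})$, so $\mu_{\bar{\rho}}$ becomes $\xi$-invariant and the patching argument simplifies considerably. When $\bar{\rho}$ is reducible, no such Galois-theoretic involution is available, the values $\mu_{\bar{\rho}}([\gamma])$ and $\mu_{\bar{\rho}}([\xi\gamma])$ can genuinely differ, and the patching must be carried out carefully. Verifying modularity lifting and local-global compatibility for modular forms on quaternion algebras ramified at primes above $p$ (Section \ref{quatmodforms}), and controlling the uniformizer-eigenvalue decomposition of the patched module so as to extract each extended type multiplicity separately, is the principal technical work of Section \ref{proof}.
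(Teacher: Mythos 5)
Your overall strategy is indeed the paper's: you define $\mu_{\bar\rho}$ exactly as in Paragraph \ref{defmu}, you handle the case $\red$ via Theorem \ref{BMquatbisQp} and the relation $\mu_{\bar\rho}([\gamma])+\mu_{\bar\rho}([\xi\gamma])=i_{\bar\rho}([\gamma])$, and for $\scal$ and $\irr$ you patch quaternionic forms with coefficients $\sigmaG(\t)\otimes\sigma_w$ at a distinguished place $v_0$, using the extra uniformizer action (the operator $W_{v_0}$ of Paragraph \ref{heckep}) to split the patched module into two eigenspaces matched with the two conjugate extended types via Lemma \ref{eigenvalueexttype}. The gap is at the decisive step. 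You assert that the eigenspace $M_{\infty}^{+}$ is maximal Cohen--Macaulay over (a power series ring over) $R^{\square,\psi}(w,\t^{+},\bar{\rho})$ with the correct multiplicity, i.e.\ that its support is \emph{all} of $\spec R_{\infty}^{+}$, and you attribute this to Jacquet--Langlands and local-global compatibility. But freeness over $S_{\infty}$ only gives maximal Cohen--Macaulayness over the support, hence only the inequality $e(M_{\infty}^{\pm}/\pi,R_{\infty}^{\pm}/\pi)\leq e(R_{\infty}^{\pm}/\pi)$; proving full support of $M_{\infty}^{+}$ directly would amount to producing, on every irreducible component of $\spec R_{\infty}^{+}[1/p]$, an automorphic point whose extended type at $v_0$ is $\t^{+}$ --- an automorphy statement with prescribed \emph{extended} type, which is not available and is essentially what is being proved. (Also, ``generic rank'' is not the relevant invariant: the comparison is between Hilbert--Samuel multiplicities of special fibers; the generic rank of each eigenspace over its support is $1$, not $\mu_{\bar\rho}([\bar{\sigmaG}(\t)\otimes\bar{\sigma_w}])$.)

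The paper never proves full support of each eigenspace directly; it forces it numerically, and this forcing is the key idea missing from your outline. First, Proposition \ref{equality} gives the total identity $e(M_{\infty}/\pi,R_{\infty}/\pi)=d_{\t}\,e(R_{\infty}/\pi)$; for general $w$ this already uses the decomposition $M_{\infty}\otimes\F=\oplus_{\gamma}\bar{M}_{\infty,\gamma}^{\oplus n_{\gamma}}$ with $\bar{M}_{\infty,\gamma}$ independent of $(\t,w)$, the proven Breuil--M\'ezard conjecture in its quaternionic form (Theorem \ref{BMquatbisQp}), and, for $w=w_0$, the support criterion of \cite[Lemma 4.3.8]{GK} together with Proposition \ref{existsmodform}. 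Then, since $e(M_{\infty}^{+}/\pi,R_{\infty}^{+}/\pi)+e(M_{\infty}^{-}/\pi,R_{\infty}^{-}/\pi)=e(M_{\infty}/\pi,R_{\infty}/\pi)$ while $e(R_{\infty}^{+}/\pi)+e(R_{\infty}^{-}/\pi)=d_{\t}\,e(R_{\infty}/\pi)$ by Proposition \ref{summult}, the two one-sided inequalities must be equalities sign by sign; applying the same argument to the pieces $\bar{M}_{\infty,\gamma}^{\pm}$ yields $e(R^{\square,\psi}(w,\t^{+},\bar{\rho})/\pi)=\sum_{\gamma}[\bar{\sigmaG}(\t)\otimes\bar{\sigma_w}:\gamma]\;e(R^{\square,\psi_{\gamma}}(w_0,\t_{\gamma}^{+},\bar{\rho})/\pi)=\mu_{\bar{\rho}}([\bar{\sigmaG}(\t)\otimes\bar{\sigma_w}])$. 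Without this mechanism (or an equivalent substitute), your construction delivers only an inequality between $e(R^{\square,\psi}(w,\t^{+},\bar{\rho})/\pi)$ and the automorphic quantity, not the stated equality; supplying it --- including the simultaneous patching over all types and weights that makes $\bar{M}_{\infty,\gamma}^{\pm}$ independent of $(\t,w)$ --- is precisely the content of Section \ref{proof}.
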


\begin{theo}
\label{mainK}
Let $\bar{\rho}$ be a continuous representation of $G_{K}$ of dimension $2$
with coefficients in $\F$ that is not a twist of an extension of the
trivial character by the cyclotomic character.
Let $\bar{\psi} = \omega^{-1}\det\bar{\rho}$, $\varpi_{K}$ a
uniformizer of $K$ and $\alpha$ a square root of
$\bar{\psi}(\varpi_{K})^{-1}$.

There exists a positive linear form $\mu_{\bar{\rho}}$ on $\R(\Gamma)$
with values in $\Z$ satisfying the following property:
for any discrete series inertial type $\t$,
character $\psi$ lifting $\bar{\psi}$ compatible with $\t$ and $w_0$, 
and $\t^+$ a discrete series
extended type compatible with $(\t,\psi)$ we have:
\[
e(R^{\square,\psi}(w_0,\t^+,\bar{\rho})/\pi) =
\mu_{\bar{\rho}}([\bar{\sigmaG}(\t)])
\]
for the choice of representation $\sigmaG(\t)$ of $\G_{\varpi_K}$ such that
$\t^+ = \LL_D(\sigmaG(\t))\otimes\unr(a)^{-1}$ for
some $a \in \z^{\times}$ lifting $\alpha$.
\end{theo}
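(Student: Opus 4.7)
To construct $\mu_{\bar\rho}$, for each irreducible representation $\gamma$ of $\Gamma$ I invoke Proposition \ref{liftistype}, which provides a discrete series inertial type $\t_\gamma$ together with a specific choice of $\sigmaG(\t_\gamma)$ whose reduction is $\gamma$: if $\gamma$ is one-dimensional then $\t_\gamma$ is scalar, and the datum of $\sigmaG(\t_\gamma)$ picks out one of the two compatible extended types, which I denote $\t_\gamma^+$; if $\gamma=r_a$ is two-dimensional then $\t_\gamma$ is of form $\red$ and has a unique compatible extended type $\t_\gamma^+$. I set $\mu_{\bar\rho}([\gamma]):= e(R^{\square,\psi}(w_0,\t_\gamma^+,\bar\rho)/\pi)$ and extend linearly; this is tautologically a positive $\Z$-valued linear form, independence of the auxiliary $\psi$ following from Remark \ref{indepdet}.

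To verify the formula for arbitrary $(\t,\t^+)$, I distinguish cases by the form of $\t$. For $\t$ of form $\red$, there is a unique compatible extended type with $R^{\square,\psi}(w_0,\t^+,\bar\rho)=R^{\square,\psi}(w_0,\t^{ds},\bar\rho)$ and $d_\t=2$; moreover $\sigmaG(\t)$ is self-conjugate under $\xi$-twist by Proposition \ref{classredirr}, so $\bar{\sigmaG}(\t)$ decomposes as $\sum n_a[r_a]+\sum m_b([\delta_b]+[\xi\delta_b])$. Combining this with Theorem \ref{BMquatbisK} and the fact that $i_{\bar\rho}$ factors through $\R(\ell^\times)$ (hence $i_{\bar\rho}([\xi\gamma])=i_{\bar\rho}([\gamma])$), a direct bookkeeping yields $\mu_{\bar\rho}([\bar{\sigmaG}(\t)])=\tfrac{1}{2}i_{\bar\rho}([\bar{\sigmaG}(\t)])=e(R^{\square,\psi}(w_0,\t^+,\bar\rho)/\pi)$. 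For $\t$ of form $\scal$ or $\irr$, there are two conjugate extended types $\t^+,\t^-$ with $d_\t=1$; Theorem \ref{BMquatbisK} provides only the sum equality $e(R^{\square,\psi}(w_0,\t^+,\bar\rho)/\pi)+e(R^{\square,\psi}(w_0,\t^-,\bar\rho)/\pi)=i_{\bar\rho}([\bar{\sigmaG}(\t)^+])$, and an analogous decomposition shows the same identity holds with $\mu_{\bar\rho}$ in place of $e$. The theorem in these cases is thus equivalent to prescribing the distribution of $i_{\bar\rho}([\bar{\sigmaG}(\t)^+])$ between the two conjugate extended types.

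The main obstacle is precisely this individual equality in the $\scal$ and $\irr$ cases. I would handle it by refining the Taylor-Wiles patching argument of Gee-Geraghty underlying Theorem \ref{BMquatbisK}: globalize $\bar\rho$ via a totally real field and a quaternion algebra ramified at all infinite places and at all primes above $p$, then carry out patching with the coefficient system at $p$ given by $\sigmaG(\t)^+$ viewed as a smooth representation of $\G_{\varpi_K}$, rather than its restriction to $\O_D^\times$ (which is the $\sigma_D(\t)$ used in \cite{GG}). The extra data of the action of a uniformizer $\varpi_D$ on $\sigmaG(\t)^+$ corresponds via Jacquet-Langlands to specifying the full Weil-Deligne representation (not merely its restriction to inertia), which picks out the extended type $\t^+$ rather than $\t^-$. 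The resulting patched module is supported on the image in the patched ring of $R^{\square,\psi}(w_0,\t^+,\bar\rho)$, and computing its Hilbert-Samuel multiplicity by the usual recipe in terms of the composition factors of $\bar{\sigmaG}(\t)^+$ yields the individual equality, completing the proof.
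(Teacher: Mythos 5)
Your definition of $\mu_{\bar\rho}$ and your treatment of the case $\red$ (via Theorem \ref{BMquatbisK}, Proposition \ref{summult} and the $\xi$-invariance of $i_{\bar\rho}$) agree with the paper, and your proposed mechanism for the remaining case — patch with the $\G_{\varpi_K}$-structure at $v_0$ rather than just the $\O_D^{\times}$-structure, and use Jacquet--Langlands/local-global compatibility (this is Lemma \ref{eigenvalueexttype} in the paper) to see that the action of $\varpi_D$ detects the extended type — is exactly the mechanism the paper uses, in the equivalent form of a Hecke operator $W_{v_0}$ on the patched module giving a decomposition $M_{\infty}=M_{\infty}^{+}\oplus M_{\infty}^{-}$ with $R_{\infty}$ acting on $M_{\infty}^{\pm}$ through $R_{\infty}^{\pm}$.

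However, the decisive step is asserted rather than proved, and as stated it does not go through. Knowing that $M_{\infty}^{+}$ is supported on $\spec R_{\infty}^{+}$ and computing ``by the usual recipe'' only gives the inequality $e(M_{\infty}^{+}/\pi,R_{\infty}^{+}/\pi)\leq e(R_{\infty}^{+}/\pi)$ (Kisin's argument bounds the generic rank and uses freeness over $S_{\infty}$); to get equality one must know that the support of $M_{\infty}^{+}$ is \emph{all} of $\spec R_{\infty}^{+}$, i.e.\ that every irreducible component of the extended-type deformation ring carries a modular point with the prescribed $W_{v_0}$-eigenvalue. No modularity lifting theorem with fixed extended type is available to guarantee this; establishing it is essentially the content of the theorem, so your argument is circular at this point. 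The paper closes the gap by a numerical forcing argument that your proposal omits: first Proposition \ref{equality}, $e(M_{\infty}/\pi,R_{\infty}/\pi)=d_{\t}\,e(R_{\infty}/\pi)$, which itself requires producing modular lifts of the prescribed type hitting every component of $\spec R_{\infty}[1/p]$ (Propositions \ref{existsresrepr}, \ref{existsmodformirr}, \ref{existsmodform}, plus the connectedness criterion of Gee--Kisin); then Proposition \ref{summult}, $e(R_{\infty}^{+}/\pi)+e(R_{\infty}^{-}/\pi)=d_{\t}\,e(R_{\infty}/\pi)$; combined with the two inequalities for $\pm$, these force $e(M_{\infty}^{\pm}/\pi,R_{\infty}^{\pm}/\pi)=e(R_{\infty}^{\pm}/\pi)$. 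The same forcing must then be applied weight by weight, using the type-independence of the patched modules $\bar{M}_{\infty,\gamma}$ and the compatibility of the $\pm$-decompositions of $M_{\infty}$ and of each $M_{\infty,\gamma}$ (all normalized by the single fixed square root $\alpha$ of $\bar\psi(\varpi_K)^{-1}$, a coherence your definition of $\t_{\gamma}^{+}$ leaves unspecified), in order to convert the Jordan--H\"older decomposition of $\bar{M}_{\infty}^{+}$ into the formula $e(R^{\square,\psi}(w_0,\t^{+},\bar\rho)/\pi)=\sum_{\gamma}[\bar{\sigmaG}(\t):\gamma]\,e(R^{\square,\psi}(w_0,\t_{\gamma}^{+},\bar\rho)/\pi)=\mu_{\bar\rho}([\bar{\sigmaG}(\t)])$. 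Without these ingredients your final sentence does not yield the individual equality.
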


\begin{rema}
\label{multiconjugate}
It follows from the definition of the compatibility of $\t^+$ with
$(\t,\psi,w)$ that there exists indeed a choice of $\sigmaG(\t)$
satisfying the condition. If $\t^-$ is the extended type conjugate to
$\t^+$, then the choices of $\sigmaG(\t)$ for $\t^+$ and $\t^-$ differ by
multiplication by $\xi$.

In the case when $\t$ is of the form $\red$,  
recall that there is only one extended type $\t^+$ compatible
with $(\t,\psi)$, and $R^{\square,\psi}(w,\t^{ds},\bar{\rho}) = 
R^{\square,\psi}(w,\t^{+},\bar{\rho})$.
There is no choice to be made for ${\sigmaG}(\t)$ as it is isomorphic
to $\xi{\sigmaG}(\t)$.
\end{rema}

We have the following proposition, which is a consequence of Proposition 1.3.9 of
\cite{Kis09a}:

\begin{prop}
\label{summult}
Let $\t^+$, $\t^-$ be the two distinct conjugate extended types compatible 
with $(\t,\psi)$ with $\t$ of the form $\scal$ or $\irr$.  Then 
$e(R^{\square,\psi}(w,\t^+,\bar{\rho})/\pi) + 
e(R^{\square,\psi}(w,\t^-,\bar{\rho})/\pi) = 
e(R^{\square,\psi}(w,\t^{ds},\bar{\rho})/\pi)$.
\end{prop}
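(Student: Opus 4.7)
The plan is to derive the additivity relation directly from additivity of Hilbert--Samuel multiplicities over disjoint unions of irreducible components, packaged as Proposition 1.3.9 of \cite{Kis09a}. The setup makes this available because the hypotheses — $p$-torsion freeness, reducedness and equidimensionality of the generic fiber — are already encoded in Proposition \ref{defextended} and Theorem \ref{definertial}.

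First I would use Proposition \ref{family} together with the construction in Paragraph \ref{defdefrings} to observe that, when $\t$ is of the form $\scal$ or $\irr$, every irreducible component of $\spec R^{\square,\psi}(w,\t^{ds},\bar{\rho})$ carries a well-defined and constant extended type, which must be one of the two conjugate types $\t^+$ or $\t^-$ compatible with $(\t,\psi)$. This partitions the set of irreducible components into two disjoint pieces; by construction, $R^{\square,\psi}(w,\t^+,\bar{\rho})$ (resp. $R^{\square,\psi}(w,\t^-,\bar{\rho})$) is precisely the maximal reduced quotient of $R^{\square,\psi}(w,\t^{ds},\bar{\rho})$ supported on the corresponding piece. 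In particular, if $I^\pm$ denotes the kernel of the surjection $R^{\square,\psi}(w,\t^{ds},\bar{\rho}) \twoheadrightarrow R^{\square,\psi}(w,\t^\pm,\bar{\rho})$, then $I^+ \cap I^- = (0)$ in $R^{\square,\psi}(w,\t^{ds},\bar{\rho})$, and the primes containing $I^+$ and those containing $I^-$ form disjoint families that together exhaust the minimal primes of the ambient ring.

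Second, I would invoke Proposition 1.3.9 of \cite{Kis09a}: for a local Noetherian $\O_E$-algebra $R$ which is $p$-torsion free, equidimensional, and such that $R[1/p]$ is reduced, the Hilbert--Samuel multiplicity $e(R/\pi)$ equals the sum $\sum_\mathfrak{p} e(R/\mathfrak{p} \cdot \pi)$ over minimal primes $\mathfrak{p}$ of $R$. Applied to $R^{\square,\psi}(w,\t^{ds},\bar{\rho})$ and grouped according to the partition of minimal primes into those containing $I^+$ and those containing $I^-$, this directly yields
\[
e(R^{\square,\psi}(w,\t^{ds},\bar{\rho})/\pi) = e(R^{\square,\psi}(w,\t^+,\bar{\rho})/\pi) + e(R^{\square,\psi}(w,\t^-,\bar{\rho})/\pi),
\]
which is the desired identity.

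The only point requiring care is verifying that the hypotheses of Kisin's proposition genuinely apply on both sides; this is where I expect the only real friction. By Proposition \ref{defextended} the rings $R^{\square,\psi}(w,\t^\pm,\bar{\rho})$ and $R^{\square,\psi}(w,\t^{ds},\bar{\rho})$ are $p$-torsion free with reduced and equidimensional generic fiber, and all three have the same Krull dimension (being equidimensional quotients cut out by unions of irreducible components of a common equidimensional ambient space). Given this, the additivity is a direct consequence of Kisin's result and no further computation is needed.
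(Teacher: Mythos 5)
Your proposal is correct and follows essentially the same route as the paper, which proves the statement simply by invoking Proposition 1.3.9 of \cite{Kis09a}: the rings $R^{\square,\psi}(w,\t^{\pm},\bar{\rho})$ are by construction the reduced quotients of $R^{\square,\psi}(w,\t^{ds},\bar{\rho})$ supported on the two disjoint groups of irreducible components (the partition being guaranteed by Proposition \ref{family}), and the multiplicity of the special fiber is additive over these components by Kisin's result. The details you supply (the partition of minimal primes, the verification of $p$-torsion freeness and reducedness/equidimensionality of the generic fibers via Proposition \ref{defextended}) are exactly the implicit content of the paper's one-line citation.
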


We have the following corollary ($\t_r$ and $\t_{\delta}$ are the
inertial types defined in the proof of Proposition \ref{liftistype}):
\begin{coro}
$\mu_{\bar{\rho}}([r]) =
e(R^{\square,\psi}(w_0,\t_r^{ds},\bar{\rho})/\pi)$
for any irreducible representation $r$ of $\Gamma$ of dimension $2$, and
$\mu_{\bar{\rho}}([\delta]+[\xi\delta]) =
e(R^{\square,\psi}(w_0,\t_{\delta}^{ds},\bar{\rho})/\pi)$
for any irreducible representation $\delta$ of $\Gamma$ of dimension $1$.
In particular, 
for any irreducible representation $\gamma$ of $\Gamma$,
we have $\mu_{\bar{\rho}}([\gamma]) + \mu_{\bar{\rho}}([\xi\gamma]) = 
i_{\bar{\rho}}([\gamma])$.
\end{coro}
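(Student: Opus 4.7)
The plan is to assemble the corollary from Theorems \ref{mainQp}/\ref{mainK}, Proposition \ref{summult}, and the explicit description of the automorphic side via Proposition \ref{liftistype} and Remark \ref{multiconjugate}. The key observation is that Proposition \ref{liftistype} already matches each irreducible $\Gamma$-representation to a distinguished discrete series inertial type: a two-dimensional $r$ is $\bar\sigmaG(\t_r)$ for the type $\t_r$ of form $\red$, while a character $\delta$ is $\bar\sigmaG(\t_\delta)$ for the scalar type $\t_\delta$. We then just have to sum the output of the main theorems over the conjugate extended types compatible with each such $\t$.

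For the first identity, I would fix an irreducible two-dimensional $r$ and apply Theorem \ref{mainK} (or equivalently Theorem \ref{mainQp} with $w=w_0$, since $\sigma_{w_0}$ is trivial) at $\t=\t_r$. By Remark \ref{multiconjugate}, when $\t_r$ is of form $\red$ there is a unique compatible extended type $\t_r^+$, the equality $R^{\square,\psi}(w_0,\t_r^{ds},\bar\rho) = R^{\square,\psi}(w_0,\t_r^+,\bar\rho)$ holds, and there is no ambiguity in $\sigmaG(\t_r)$. The theorem then directly reads $\mu_{\bar\rho}([r]) = e(R^{\square,\psi}(w_0,\t_r^+,\bar\rho)/\pi)$.

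For the second identity, I would fix a character $\delta$, so that $\t_\delta$ is scalar with two conjugate compatible extended types $\t_\delta^\pm$. By Remark \ref{multiconjugate}, the two corresponding choices of $\sigmaG(\t_\delta)$ prescribed by the main theorem differ by a twist by $\xi$, hence reduce to $\delta$ and $\xi\delta$ respectively. Writing the main theorem identity for each of the two choices and summing, the left-hand sides produce $\mu_{\bar\rho}([\delta]+[\xi\delta])$, while the right-hand sides combine via Proposition \ref{summult} into $e(R^{\square,\psi}(w_0,\t_\delta^{ds},\bar\rho)/\pi)$.

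The \emph{in particular} statement then follows by comparing with Theorem \ref{BMquatbisK}, which gives $d_\t\, e(R^{\square,\psi}(w_0,\t^{ds},\bar\rho)/\pi) = i_{\bar\rho}([\bar\sigmaG(\t)])$. For $\gamma=r$ two-dimensional one has $\xi r\simeq r$ and $d_{\t_r}=2$, so twice the first identity matches the definition of $i_{\bar\rho}([r])$; for $\gamma=\delta$ one-dimensional, $d_{\t_\delta}=1$ and the second identity is the required equality. The argument is essentially bookkeeping once the main theorems are in hand, so I do not expect a real obstacle; the only point of care is ensuring the correspondence between the two conjugate extended types and the two choices of $\sigmaG(\t)$ twisted by $\xi$, which is precisely what the explicit prescription in Theorems \ref{mainQp}/\ref{mainK} guarantees.
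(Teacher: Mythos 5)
Your proposal is correct and follows essentially the same route as the paper: identify each irreducible $\gamma$ with $\bar{\sigmaG}(\t_\gamma)$ via Proposition \ref{liftistype}, use $R^{\square,\psi}(w_0,\t_r^{+},\bar{\rho})=R^{\square,\psi}(w_0,\t_r^{ds},\bar{\rho})$ in the $\red$ case, sum the two conjugate extended types via Proposition \ref{summult} and Remark \ref{multiconjugate} in the scalar case, and compare with Theorems \ref{BMquatbisQp} and \ref{BMquatbisK} for the statement involving $i_{\bar{\rho}}$. No gaps; this is the paper's argument with the bookkeeping made explicit.
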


\begin{proof}
Let $r$ be an irreducible representation of $\Gamma$ of dimension $2$.
Then $r = \bar\sigmaG(\t_r)$ for some inertial type $\t_r$ of the form
$\red$ by Proposition \ref{liftistype}.
Then as remarked in Paragraph \ref{defdefrings}, if $\t_r^+$ is the
extended type compatible with $(\t_r,\psi)$, then 
$R^{\square,\psi}(w_0,\t_r^+,\bar{\rho}) =
R^{\square,\psi}(w_0,\t_r^{ds},\bar{\rho})$, hence the formula in this
case.
Let $\delta$ be an irreducible representation of $\Gamma$ of dimension
$1$.
Then 
$e(R^{\square,\psi}(w_0,\t_{\delta}^{ds},\bar{\rho})/\pi) =
e(R^{\square,\psi}(w_0,\t_\delta^{+},\bar{\rho})/\pi) +
e(R^{\square,\psi}(w_0,\t_\delta^{-},\bar{\rho})/\pi)$ where $\t_\delta^{+}$ and
$\t_\delta^{-}$ are the two conjugate extended types compatible with
$(\t_\delta,\psi)$. So we deduce the formula from Remark
\ref{multiconjugate}.
The formula with $i_{\bar\rho}$ then follows from Theorems
\ref{BMquatbisQp} and \ref{BMquatbisK}.
\end{proof}

It follows from this corollary 
that $\mu_{\bar{\rho}}([\gamma]) = 0$ if $\gamma \not \in \WG(\bar{\rho})$.
We begin the definition of $\mu_{\bar{\rho}}$
by setting $\mu_{\bar{\rho}}([\gamma]) = 0$ for any irreducible $\gamma$ 
not in $\WG(\bar{\rho})$. With this definition, the equalities of Theorem
\ref{mainQp} and \ref{mainK} hold
for all $\t,\psi,w$ (with $w=w_0$ if $K \neq \Q_p$)
such that $R^{\square,\psi}(w,\t^{ds},\bar{\rho}) = 0$.

From Proposition \ref{ordinary} we deduce:
\begin{prop}
\label{dim1}
If $\bar{\rho}$ is a representation such that $\WG(\bar{\rho})$
contains a representation $\delta$ of dimension $1$ then $\bar{\rho}$ is a twist
of an extension of the trivial character by the cyclotomic character and
there is at most one possible value for $\delta$ for which
$\mu_{\bar{\rho}}(\delta) \neq 0$.
\end{prop}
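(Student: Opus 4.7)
The plan is to use Proposition \ref{ordinary} to force the required form of $\bar\rho$ and then argue that both the scalar inertial type $\t_\delta$ and the extended type selected between $\t_\delta^+$ and $\t_\delta^-$ by $\delta$ are uniquely pinned down by $\bar\rho$.

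First, suppose $\delta\in\WG(\bar\rho)$ is $1$-dimensional; by Proposition \ref{liftistype} one can write $\delta\simeq\bar\sigmaG(\t_\delta)$ for a scalar inertial type $\t_\delta=(\tilde\chi\oplus\tilde\chi)|_{I_K}$, with $\tilde\chi$ a Teichm\"uller-tame character. The corollary immediately preceding the statement gives
\[
\mu_{\bar\rho}([\delta])+\mu_{\bar\rho}([\xi\delta])=e(R^{\square,\psi}(w_0,\t_\delta^{ds},\bar\rho)/\pi),
\]
which is non-zero by hypothesis, so $R^{\square,\psi}(w_0,\t_\delta^{ds},\bar\rho)$ is non-zero. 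By the definition of $\t^{ds}$ in the scalar case (Paragraph \ref{defdefrings}) and the density of the non-potentially-crystalline locus on each of its components, this ring admits a closed point whose associated Galois representation $\rho$ is a potentially semi-stable, not potentially crystalline lift of $\bar\rho$ of scalar inertial type $\t_\delta$ and Hodge--Tate weights $(0,1)$. Proposition \ref{ordinary} then implies that $\bar\rho$ is an unramified twist of $\smat\omega*01\otimes\bar{\tilde\chi}$, proving the first assertion.

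For uniqueness, this form of $\bar\rho$ determines $\bar{\tilde\chi}|_{I_K}$, namely as the inertial component of the lower-diagonal character; the potential ambiguity in the split case is ruled out for $p\geq 5$ since then $\omega\neq\omega^{-1}$. Because $\tilde\chi$ is Teichm\"uller-tame it is determined by $\bar{\tilde\chi}|_{I_K}$, so $\t_\delta$, and with it the pair $\{\delta,\xi\delta\}$, is fixed. The two conjugate extended types $\t_\delta^+,\t_\delta^-$ compatible with $(\t_\delta,\psi)$ correspond, via the assignment in Theorem \ref{mainQp} together with Remark \ref{multiconjugate}, to $\delta$ and $\xi\delta$ respectively, so it suffices to verify that only one of $\t_\delta^+,\t_\delta^-$ admits lifts reducing to $\bar\rho$. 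This is the case because any Steinberg-type potentially semi-stable lift of the ordinary $\bar\rho$ has Weil--Deligne representation whose Frobenius eigenvalue reduces modulo $\pi$ to the eigenvalue through which Frobenius acts on the unramified quotient of $\bar\rho$, and this constraint selects exactly one of $\t_\delta^+,\t_\delta^-$.

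I expect the main obstacle to be this last step: tracking the normalizations of Paragraph \ref{lljl} and of the assignment $\sigmaG(\t)\leftrightarrow\t^+$ in Theorem \ref{mainQp} carefully enough to confirm that the reduction modulo $\pi$ of the Frobenius eigenvalue of a Steinberg-type lift forces exactly one choice of extended type, compatibly with the bijection between $\{\t_\delta^+,\t_\delta^-\}$ and $\{\delta,\xi\delta\}$. Once this normalization is under control, the remaining ingredients are direct consequences of results already established in the paper.
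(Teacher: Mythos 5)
Your proof is correct and is essentially the paper's own (implicit) argument: the paper deduces the statement directly from Proposition \ref{ordinary}, exactly as you do, by producing a non-potentially-crystalline scalar-type lift from the non-vanishing of the $\t_\delta^{ds}$-ring and then using the resulting ordinary shape of $\bar\rho$ (with $\omega\neq\omega^{-1}$, available since $p\geq 5$ is in force in the relevant case) to pin down first $\t_\delta$ and then the extended type. The normalization-tracking you flag as the main obstacle is not actually needed for the ``at most one'' conclusion: since $\t_\delta^- = \t_\delta^+\otimes\unr(-1)$, lifts of the two conjugate extended types have ordinary quotient characters whose reductions differ by $\unr(-1)\neq 1$ (as $p>2$), so they cannot both agree with the character determined by $\bar\rho$, whichever convention relates $\{\t_\delta^{\pm}\}$ to $\{\delta,\xi\delta\}$ — that bookkeeping only matters if one wants to know \emph{which} of the two occurs, as in Lemma \ref{multQp}.
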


\begin{rema}
When $K=\Q_p$, $\bar{\rho}$ is a twist
of an extension of the trivial character by the cyclotomic character if
and only if $\WG(\bar{\rho})$
contains a representation $\delta$ of dimension $1$, and then
$i_{\bar{\rho}}(\delta) =1$. This follows from the explicit computations
of deformation rings that can be found in \cite[Section 5.2]{BM}.
\end{rema}

\begin{prop}
\label{samevaluemu}
If $\bar{\rho}$ is
not a twist of an extension of the trivial character by the cyclotomic
character then for any representation $\gamma$ of $\Gamma$ we have
$\mu_{\bar\rho}([\gamma]) = \mu_{\bar\rho}([\xi\gamma])$.
\end{prop}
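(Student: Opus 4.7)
The plan is to reduce the statement to a check on each class of irreducible representations of $\Gamma$ using the classification from Paragraph \ref{GammaK}, since $\mu_{\bar{\rho}}$ is a linear form on $\R(\Gamma)$ and twisting by $\xi$ descends to an involution of $\R(\Gamma)$. Any class $[\gamma]\in\R(\Gamma)$ can be written uniquely as a $\Z$-linear combination of classes of characters $\delta_a$, $\xi\delta_a$, and two-dimensional irreducibles $r_a$, and both $\mu_{\bar\rho}$ and the operation $[\gamma]\mapsto[\xi\gamma]$ are $\Z$-linear, so it suffices to verify the equality for each type of irreducible.

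For a two-dimensional irreducible $r = r_a$, the classification recalled before Proposition~\ref{reductypeirr} gives $\xi r_a \simeq r_a$, so the equality $\mu_{\bar\rho}([r])=\mu_{\bar\rho}([\xi r])$ is automatic. For a one-dimensional irreducible $\delta$, I would invoke Proposition~\ref{dim1} in its contrapositive form: under our hypothesis on $\bar\rho$, the set $\WG(\bar\rho)$ contains no one-dimensional representation. Combined with the fact, recorded in the paragraph before Proposition~\ref{dim1}, that $\mu_{\bar\rho}$ vanishes on any irreducible $\gamma\notin\WG(\bar\rho)$, this yields $\mu_{\bar\rho}([\delta])=0$, and symmetrically $\mu_{\bar\rho}([\xi\delta])=0$ since $\xi\delta$ is again one-dimensional irreducible. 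Hence the equality holds trivially in this case.

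Assembling the two cases by linearity over $\R(\Gamma)$ proves the proposition. There is no real obstacle: the content is entirely contained in Proposition~\ref{dim1} (which handles the possibly troublesome one-dimensional part) and in the self-duality $\xi r_a\simeq r_a$ of the two-dimensional irreducibles (which handles everything else). The only small thing to watch is that one should keep in mind that $\mu_{\bar\rho}$ has been defined to vanish outside $\WG(\bar\rho)$ precisely so that this argument applies, rather than trying to use the defining equality with deformation ring multiplicities, where the desired symmetry is less transparent.
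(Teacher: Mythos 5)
Your proof is correct and follows essentially the same route as the paper: reduce by linearity to irreducibles, use $\xi r_a\simeq r_a$ in dimension $2$, and use Proposition \ref{dim1} (together with the vanishing of $\mu_{\bar\rho}$ outside $\WG(\bar\rho)$) to see that both sides vanish in dimension $1$.
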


\begin{proof}
It suffices to prove this for representations $\gamma$ that are
irreducible. If $\dim\gamma = 2$ then $\xi\gamma = \gamma$ so the
statement holds. If $\dim\gamma = 1$ then by Proposition \ref{dim1} both
sides of the equality are zero.
\end{proof}

\begin{coro}
\label{samemult}
Let $\bar{\rho}$ be a continuous representation of $G_{K}$ of dimension $2$
with coefficients in $\F$ which is
not a twist of an extension of the trivial character by the cyclotomic
character.
Then for any discrete series inertial type $\t$,
and any Hodge-Tate type $w$ if $K=\Q_p$, or for $w=w_0$ if $K \neq \Q_p$, 
we have
$e(R^{\square,\psi}(w,\t^+,\bar{\rho})/\pi) =
e(R^{\square,\psi}(w,\t^-,\bar{\rho})/\pi) =
\frac{d_{\t}}{2}e(R^{\square,\psi}(w,\t^{ds},\bar{\rho})/\pi)$.
\end{coro}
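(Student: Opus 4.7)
The plan is to split into two cases according to the form of $\t$, in each case combining Theorem \ref{mainQp} (or Theorem \ref{mainK}) with the symmetry statement of Proposition \ref{samevaluemu}, and closing up with Proposition \ref{summult}.

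First, if $\t$ is of the form $\red$, then as recorded in Paragraph \ref{defdefrings} there is only one extended type $\t^+$ compatible with $(\t,\psi)$, and $R^{\square,\psi}(w,\t^+,\bar\rho) = R^{\square,\psi}(w,\t^{ds},\bar\rho)$. Since $d_{\t}=2$ in this case, the conclusion $e(R^{\square,\psi}(w,\t^+,\bar\rho)/\pi) = \tfrac{d_\t}{2} e(R^{\square,\psi}(w,\t^{ds},\bar\rho)/\pi)$ is then a tautology, and the conjugate $\t^-$ is by convention identified with $\t^+$.

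Next, assume $\t$ is of the form $\scal$ or $\irr$, so that $d_{\t}=1$ and there are exactly two conjugate extended types $\t^+,\t^-$ compatible with $(\t,\psi)$. Applying Theorem \ref{mainQp} (if $K=\Q_p$) or Theorem \ref{mainK} (if $K\neq\Q_p$, so $w=w_0$) to each of $\t^+$ and $\t^-$ gives
\[
e(R^{\square,\psi}(w,\t^+,\bar\rho)/\pi) = \mu_{\bar\rho}([\bar\sigmaG(\t)\otimes \bar\sigma_w]),
\]
\[
e(R^{\square,\psi}(w,\t^-,\bar\rho)/\pi) = \mu_{\bar\rho}([\xi\bar\sigmaG(\t)\otimes \bar\sigma_w]),
\]
where, by Remark \ref{multiconjugate}, the representations $\sigmaG(\t)$ attached to $\t^-$ and to $\t^+$ differ precisely by multiplication by $\xi$. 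Since $\bar\rho$ is not a twist of an extension of the trivial character by the cyclotomic character, Proposition \ref{samevaluemu} gives $\mu_{\bar\rho}([\gamma]) = \mu_{\bar\rho}([\xi\gamma])$ for every representation $\gamma$ of $\Gamma$, applied here to $\gamma = \bar\sigmaG(\t)\otimes\bar\sigma_w$. This yields the first equality $e(R^{\square,\psi}(w,\t^+,\bar\rho)/\pi) = e(R^{\square,\psi}(w,\t^-,\bar\rho)/\pi)$.

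Finally, by Proposition \ref{summult} the sum of these two multiplicities equals $e(R^{\square,\psi}(w,\t^{ds},\bar\rho)/\pi)$, so each equals half of it, which is $\tfrac{d_\t}{2} e(R^{\square,\psi}(w,\t^{ds},\bar\rho)/\pi)$. There is no real obstacle here: the entire content has already been built into Theorems \ref{mainQp}--\ref{mainK}, Proposition \ref{summult} and Proposition \ref{samevaluemu}, and the only thing to verify is that the $\xi$-twist relating the two choices of $\sigmaG(\t)$ from Remark \ref{multiconjugate} matches up with the invariance under $\xi$-twist provided by Proposition \ref{samevaluemu}.
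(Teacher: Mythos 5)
Your proof is correct and follows essentially the same route as the paper: the first equality via Theorems \ref{mainQp}/\ref{mainK} together with Remark \ref{multiconjugate} (the two choices of $\sigmaG(\t)$ differ by $\xi$) and Proposition \ref{samevaluemu}, and the second via Proposition \ref{summult}. Your explicit separate treatment of the $\red$ case, where $\t^+=\t^-$ and the statement is tautological since $d_{\t}=2$, is a harmless elaboration of what the paper leaves implicit.
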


\begin{proof}
The first equality comes from Proposition \ref{samevaluemu} and Remark
\ref{multiconjugate}.
The last equality follows from Proposition \ref{summult}.
\end{proof}

\begin{rema}
If $\bar{\rho}$ is irreducible Corollary \ref{samemult} 
holds even without Theorems \ref{mainQp} and \ref{mainK}, 
because of Remark \ref{involution}.
\end{rema}

\begin{coro}
Let $K = \Q_p$.
Suppose that there exists a representation $\delta$ of dimension $1$ of $\Gamma$ with
$i_{\bar{\rho}}(\delta)\neq 0$ (that is, $\bar{\rho}$ is a twist of an extension 
of the trivial character by the cyclotomic character, and then
$i_{\bar{\rho}}(\delta) = 1$).
Then for any discrete series inertial type $\t$, Hodge-Tate type $w$,
character $\psi$ and pair of conjugate extended types $(\t^+,\t^-)$
compatible with $(t,\psi)$, we have either 
$e(R^{\square,\psi}(w,\t^+,\bar{\rho})/\pi) =
e(R^{\square,\psi}(w,\t^-,\bar{\rho})/\pi)$, or 
$|e(R^{\square,\psi}(w,\t^+,\bar{\rho})/\pi) -
e(R^{\square,\psi}(w,\t^-,\bar{\rho})/\pi)| = 1$.
The former takes place in particular when $R^{\square,\psi}(w,\t^{ds},\bar{\rho})=0$,
or $\t$ is of the form $\red$, or $\t$ is of the form $\irr$ with
$\pi_{\t}$ of the form $\ind^{D^{\times}}_{L^{\times}U_D^a}\psi$ for some ramified quadratic
extension $L$ of $K$, some $a$ and some character $\psi$ of
$L^{\times}U_D^a$ with $\psi(-1)=-1$ (see Proposition \ref{classredirr}
for the notations).
\end{coro}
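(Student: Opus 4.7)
The plan is to reduce the bound on $|e_+ - e_-|$ to a purely representation-theoretic computation of an ``asymmetry'' in $\bar{\sigmaG}(\t)\otimes\bar{\sigma_w}$. By Theorem~\ref{mainQp} combined with the fact that $\bar{\sigmaG}(\t^-)\simeq\xi\bar{\sigmaG}(\t^+)$ (Remark~\ref{multiconjugate}),
\[
e_+ - e_- = \mu_{\bar\rho}\bigl([\bar{\sigmaG}(\t)\otimes\bar{\sigma_w}] - [\xi\bar{\sigmaG}(\t)\otimes\bar{\sigma_w}]\bigr).
\]
Since $\xi\cdot[r] = [r]$ for every $2$-dimensional irreducible $r$ of $\Gamma$, the right-hand side only sees the $1$-dimensional constituents. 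The hypothesis, combined with Proposition~\ref{dim1} and the identity $\mu_{\bar\rho}([\delta]) + \mu_{\bar\rho}([\xi\delta]) = i_{\bar\rho}([\delta]) = 1$, forces (by integrality and positivity) that there is a unique $1$-dimensional character $\delta_*$ with $\mu_{\bar\rho}([\delta_*]) = 1$ and $\mu_{\bar\rho}$ vanishes on every other $1$-dimensional character. Denoting by $c_\gamma$ the multiplicity of $\gamma$ in $\bar{\sigmaG}(\t)\otimes\bar{\sigma_w}$, this yields $e_+ - e_- = c_{\delta_*} - c_{\xi\delta_*}$, so the bound reduces to $|c_{\delta_*} - c_{\xi\delta_*}|\leq 1$.

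To prove this, I would introduce, for each representation $V$ of $\Gamma$, the function $\Phi(V)\colon \Z/(q-1)\Z\to \Z$ defined by $\Phi(V)(b) = c_{\delta_b}(V) - c_{\xi\delta_b}(V)$. Decomposing $V|_{\ell^\times} = \bigoplus_a V_a$ into $\chi_a$-isotypic components and noting that $\iota$ exchanges $V_a$ and $V_{qa}$, one identifies $\Phi(V)(b) = \operatorname{tr}(\iota\mid V_{b(q+1)})$. Tracing $\iota_A\otimes\iota_B$ through $(A\otimes B)_{b(q+1)} = \bigoplus_{c+d=b(q+1)} A_c\otimes B_d$ then yields the convolution identity
\[
\Phi(A\otimes B)(b) = \sum_{b' \in \Z/(q-1)\Z} \Phi(A)(b')\,\Phi(B)(b-b'),
\]
so in particular $\Phi(\bar{\sigmaG}(\t)\otimes\bar{\sigma_w}) = \Phi(\bar{\sigmaG}(\t)) * \Phi(\bar{\sigma_w})$.

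The explicit computation of the two factors concludes the argument. Writing $w = (n, m)$ and using the basis $e_1^{n-i}e_2^i$ of $\Sym^n$, the involution $\iota$ swaps the indices $i$ and $n-i$, so the only contribution to $\operatorname{tr}(\iota)$ on any $\ell^\times$-isotypic piece comes from the fixed index $i = n/2$ (when $n$ is even). Hence $\Phi(\bar{\sigma_w})$ is the single unit mass $\mathbf{1}_{n/2 + m}$ if $n$ is even, and is $0$ if $n$ is odd. For $\bar{\sigmaG}(\t)$: in the scalar case $\sigmaG(\t)$ is $1$-dimensional, giving $\Phi(\bar{\sigmaG}(\t)) = \pm\mathbf{1}_e$; in the $\red$ case $\bar{\sigmaG}(\t)\simeq\xi\bar{\sigmaG}(\t)$ forces $\Phi = 0$; in the $\irr$ case with $\psi(-1) = -1$, Proposition~\ref{reductypeirr}(1) shows $\bar{\sigmaG}(\t)$ has only $2$-dimensional constituents so $\Phi = 0$; in the $\irr$ case with $\psi(-1) = 1$, Proposition~\ref{reductypeirr}(2) gives $\Phi(\bar{\sigmaG}(\t)) = \pm(\mathbf{1}_{b_1} + \mathbf{1}_{b_2})$ for two distinct $b_1,b_2$, the crucial cancellation being $\tfrac{q^{a-1}+1}{2} - \tfrac{q^{a-1}-1}{2} = 1$. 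Convolving either a two-point or a one-point function against $\Phi(\bar{\sigma_w})$ (zero or a unit translation) produces a function valued in $\{-1,0,1\}$, whence $|e_+ - e_-|\leq 1$.

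The three listed equality cases are now immediate: if $R^{\square,\psi}(w,\t^{ds},\bar\rho) = 0$ then $e_\pm = 0$ by Proposition~\ref{summult}; if $\t$ is of form $\red$ the unique compatible extended type makes the statement vacuous; and if $\t$ is of form $\irr$ with $\psi(-1) = -1$ the computation above gives $\Phi(\bar{\sigmaG}(\t)) = 0$, hence $\Phi(\bar{\sigmaG}(\t)\otimes\bar{\sigma_w}) = 0$ and $e_+ = e_-$. The main technical point is the convolution identity for $\Phi$, which is a careful but routine trace computation on $\iota$-stable $\ell^\times$-isotypic components.
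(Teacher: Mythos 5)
Your argument is correct and follows essentially the same route as the paper: the paper likewise reduces, via Theorem \ref{mainQp}, Remark \ref{multiconjugate}, Proposition \ref{dim1} and the identity $\mu_{\bar\rho}([\delta])+\mu_{\bar\rho}([\xi\delta])=i_{\bar\rho}([\delta])=1$, to computing $[\bar{\sigmaG}(\t)\otimes\bar{\sigma_w}:\delta]-[\bar{\sigmaG}(\t)\otimes\bar{\sigma_w}:\xi\delta]$ from Proposition \ref{reductypeirr} and Lemma \ref{decsym}, and your $\Phi$/convolution formalism is just a clean packaging of that computation (the sign $(-1)^m$ you drop in $\Phi(\bar{\sigma_w})$ is harmless for the absolute-value bound). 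The only point you pass over silently, which the paper flags in one line, is that Proposition \ref{reductypeirr} is stated for $L=\Q_p(\sqrt{\varpi_{\Q_p}})$, so for a general type of the form $\irr$ one should first choose the uniformizer $\varpi_{\Q_p}$ accordingly, which is legitimate since the multiplicities do not depend on this choice.
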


We see examples where we have
$|e(R^{\square,\psi}(w,\t^+,\bar{\rho})/\pi) -
e(R^{\square,\psi}(w,\t^-,\bar{\rho})/\pi)| = 1$
in Section \ref{appli}.
 
\begin{proof}
Note that we can choose $\varpi_{\Q_p}$ as we wish to compute the
multiplicities.
Let $\sigmaG(\t)$ be a choice of representation attached to $\t$ as in
Paragraph \ref{GpiK}. We need to compute
$[\bar{\sigmaG(\t)}\otimes\bar{\sigma_w}:\delta] -
[\xi\bar{\sigmaG(\t)}\otimes\bar{\sigma_w}:\delta]$. 
We do this using the results of Proposition \ref{reductypeirr} and the remarks
that follow for $\bar{\sigmaG(\t)}$, and the Lemma below for
$\bar{\sigma_w}$.
\end{proof}

\begin{lemm}
\label{decsym}
In $\R(\Gamma_{\Q_p})$ we have that
$[\det^m] = [\xi^m\delta_m]$ for all $m$ 
and 
$[\Sym^{2n}\F^2] = 
[\delta_n] + \sum_{i=1}^{n}[r_{n(p+1)+i(p-1)}]$
and 
$[\Sym^{2n+1}\F^2] = 
\sum_{i=0}^{n}[r_{n(p+1)+p+i(p-1)}]$ for all $n \geq 0$.

Moreover $r_{n(p+1)+i(p-1)}$ is irreducible for 
$0 < i < (p+1)/2$
and
$(p+1)/2 < i < p+1$ 
and 
$r_{n(p+1)+(p-1)(p+1)/2} = 
\delta_{n+(p-1)/2}\oplus\xi\delta_{n+(p-1)/2}$ 
and 
$r_{n(p+1)} = \delta_n\oplus \xi\delta_n$.
\end{lemm}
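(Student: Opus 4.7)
The plan is to write down a concrete embedding of $\Gamma = \Gamma_{\Q_p}$ into $\GL_2(\f)$ compatible with the realization of Paragraph \ref{realization}, and then decompose $\Sym^n$ by looking at the action on the monomial basis. Any realization $\tilde u : \G \to \GL_2(\q)$ as in Paragraph \ref{realization} has $u(a+b\varpi_D)$ of the form $\smat{a}{pb}{\bar b}{\bar a}$ in the basis $(1,\varpi_D)$ of $D$, and $\tilde u(\iota) = \sqrt p^{-1} u(\varpi_D) = \smat{0}{\sqrt p}{1/\sqrt p}{0}$; conjugating by $\mathrm{diag}(\sqrt p, 1)$ puts everything in $\z$-valued form, and reducing modulo the maximal ideal yields an embedding $\Gamma \hookrightarrow \GL_2(\f)$ in which $x \in \ell^{\times}$ acts as $\mathrm{diag}(x,x^p)$ and $\iota$ acts as $J = \smat{0}{1}{1}{0}$. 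The formula $[\det^m]=[\xi^m\delta_m]$ follows at once: $\det \mathrm{diag}(x,x^p) = x^{p+1} = \delta_1|_{\ell^{\times}}(x)$ and $\det(J) = -1 = \xi\delta_1(\iota)$, so $\det = \xi\delta_1$.

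For $\Sym^n(\f^2)$, the monomial basis $\{e_1^a e_2^b : a+b = n\}$ diagonalizes the $\ell^{\times}$-action, with $e_1^a e_2^b$ having character $\chi_{a+pb}$, and $\iota$ swaps $e_1^a e_2^b \leftrightarrow e_1^b e_2^a$. When $n = 2m$, the unique $\iota$-fixed basis vector is $e_1^m e_2^m$: it spans a one-dimensional $\Gamma$-subrepresentation with $\ell^{\times}$-character $\chi_{m(p+1)}$ and $\iota$ acting as $+1$, hence isomorphic to $\delta_m$ (and not $\xi\delta_m$). The remaining basis vectors pair up as $\{e_1^{m+k}e_2^{m-k}, e_1^{m-k}e_2^{m+k}\}$ for $k = 1,\dots,m$; each such pair spans a $\Gamma$-stable two-plane on which $\ell^{\times}$ acts via $\chi_{m(p+1)-k(p-1)} \oplus \chi_{m(p+1)+k(p-1)}$ (from $(m+k)+p(m-k) = m(p+1)-k(p-1)$) and $\iota$ swaps the two lines; this is precisely $\ind^{\Gamma}_{\ell^{\times}}\chi_{m(p+1)-k(p-1)} = r_{m(p+1)+k(p-1)}$ after applying the identity $r_a = r_{pa}$, yielding the claimed formula for $[\Sym^{2n}]$. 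The odd case is analogous: for $n = 2m+1$, all basis vectors pair up as $\{e_1^{m+1+k}e_2^{m-k}, e_1^{m-k}e_2^{m+1+k}\}$ for $k = 0, \dots, m$, and the same computation with $r_a = r_{pa}$ converts $r_{m(p+1)+1-k(p-1)}$ into $r_{m(p+1)+p+k(p-1)}$.

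The irreducibility statements then follow from the criterion recalled in Paragraph \ref{GammaK} that $r_a$ is irreducible precisely when $p+1 \nmid a$. Since $\gcd(p-1,p+1) = 2$ for $p$ odd, $n(p+1)+i(p-1)$ is divisible by $p+1$ if and only if $i \equiv 0 \pmod{(p+1)/2}$, which in the range $0 \le i \le p$ singles out $i = 0$ and $i = (p+1)/2$. For these values one factors out $p+1$ and reads off that $r_{(p+1)b} = \delta_b \oplus \xi\delta_b$, giving $r_{n(p+1)} = \delta_n \oplus \xi\delta_n$ and $r_{n(p+1)+(p-1)(p+1)/2} = \delta_{n+(p-1)/2} \oplus \xi\delta_{n+(p-1)/2}$. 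The main subtlety throughout is identifying the sign with which $\iota$ acts on $e_1^m e_2^m$ in $\Sym^{2m}$, as this is what distinguishes $\delta_m$ from $\xi\delta_m$; obtaining $+1$ requires the explicit embedding above and not merely an analysis of $\ell^{\times}$-characters.
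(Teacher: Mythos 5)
Your proof is correct. The paper states Lemma \ref{decsym} without proof, and your argument --- restricting $\Sym^n$ and $\det^m$ along an embedding of $\Gamma_{\Q_p}$ into $\GL_2(\f)$ with $x\mapsto\mathrm{diag}(x,x^p)$ and $\iota\mapsto\smat{0}{1}{1}{0}$, pairing monomials and using $r_a=r_{pa}$ --- is exactly the standard computation the paper leaves implicit; your handling of the sign on $e_1^me_2^m$ is consistent with the paper's normalization in Paragraph \ref{reformulation}, since $\bar{\sigma_w}$ is defined via any embedding of $\Gamma$ into $\GL_2(\F)$ and every such embedding sends $\iota$ to an antidiagonal involution, so the $+1$ is forced independently of the explicit realization. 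One minor point worth noting: in the even case a pair with $(p+1)/2\mid k$ has equal $\ell^{\times}$-characters on its two monomial lines, but the two-plane is still isomorphic to $\ind_{\ell^{\times}}^{\Gamma}\chi_{n(p+1)-k(p-1)}$ (the identification with the induced representation does not require the two characters to be distinct), so your formula holds in that degenerate case as well.
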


\bigskip

Our proof of Theorems \ref{mainQp} and \ref{mainK} is by deducing them
from the usual version of the Breuil-Mézard conjecture, in the cases
where it is already known. We can hope that this method generalizes to
the cases of the Breuil-Mézard conjecture that are not yet known, which
leads us to the following:

\begin{conj}
\label{conj}
Let $\bar{\rho}$ be a continuous representation of $G_{K}$ of dimension $2$
with coefficients in $\F$.
There exists a positive linear form $\mu_{\bar{\rho}}$ on $\R(\Gamma)$
with values in $\Z$ satisfying the following property:
for any discrete series inertial type $\t$,
Hodge-Tate type $w$, character $\psi$ lifting
$\omega^{-1}\det\bar{\rho}$ compatible with $\t$ and $w$, 
and extended type
$\t^+$ compatible with $(\t,\psi)$,
there exists a choice of representation $\sigmaG(\t)$ of $\G$
such that we have:
\[
e(R^{\square,\psi}(w,\t^+,\bar{\rho})/\pi) =
\mu_{\bar{\rho}}([\bar{\sigmaG}(\t)\otimes\bar{\sigma_w}])
\]
When $\bar{\rho}$ is not a twist of an extension of the trivial character
by the cyclotomic character, we have 
$e(R^{\square,\psi}(w,\t^+,\bar{\rho})/\pi) =
e(R^{\square,\psi}(w,\t^-,\bar{\rho})/\pi)$ where $\t^-$ is the extended
type that is conjugate to $\t^+$.
\end{conj}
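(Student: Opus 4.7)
The plan is to follow the same strategy as the proofs of Theorems \ref{mainQp} and \ref{mainK}: deduce the formula from a suitable quaternionic reformulation of the Breuil-M\'ezard conjecture, upgrade from discrete series inertial types to extended types using Proposition \ref{summult}, and pin down the $\xi$-twist behaviour using the classification of irreducible representations of $\Gamma$. The reason only a conjectural statement results in general is that the requisite generalization of Theorem \ref{BMquatbisK} to arbitrary $K$ and arbitrary Hodge-Tate type $w$ is not currently available.

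First, I would establish (or take as an intermediate hypothesis) the following generalization of Theorems \ref{BMquatbisQp} and \ref{BMquatbisK}: for every $\bar\rho$ there exists a positive linear functional $i_{\bar\rho} : \R(\Gamma) \to \Z$ with
\[
d_{\t}\, e(R^{\square,\psi}(w,\t^{ds},\bar{\rho})/\pi) = i_{\bar{\rho}}([\bar{\sigmaG}(\t)\otimes\bar{\sigma_w}])
\]
for all discrete series inertial types $\t$, Hodge-Tate types $w$ and compatible $\psi$. This would follow from the $\GL_2(K)$ Breuil-M\'ezard conjecture together with the Gee-Geraghty argument of \cite{GG}, combined with modularity lifting for forms on a quaternion algebra ramified at infinity and at places over $p$ with nontrivial weight. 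Both ingredients are available when $K=\Q_p$, and for general $K$ in weight $w_0$ when $\bar\rho$ is not a twist of an extension of trivial by cyclotomic.

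Given this input, I would define $\mu_{\bar\rho}$ on irreducible representations of $\Gamma$ as follows. By Proposition \ref{liftistype}, every irreducible $\gamma$ is of the form $\bar{\sigmaG}(\t_\gamma)$ for some $\t_\gamma$ of form $\red$ (when $\dim\gamma=2$) or $\scal$ (when $\dim\gamma=1$). For $r$ of dimension $2$, I set $\mu_{\bar\rho}([r]) = e(R^{\square,\psi}(w_0,\t_r^{ds},\bar\rho)/\pi)$, which, as noted in Paragraph \ref{defdefrings}, also equals $e(R^{\square,\psi}(w_0,\t_r^+,\bar\rho)/\pi)$ for the unique compatible extended type $\t_r^+$. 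For a character $\delta$, the compatibility prescription in the statement attaches to each of the two conjugate extended types $\t_\delta^{\pm}$ a specific choice of $\sigmaG(\t_\delta)$; exactly one such choice yields $\bar{\sigmaG}(\t_\delta^+)=\delta$, and I set $\mu_{\bar\rho}([\delta])$ to be $e(R^{\square,\psi}(w_0,\t_\delta^+,\bar\rho)/\pi)$ for that choice. Extending linearly gives a positive integral linear form satisfying $\mu_{\bar\rho}([\gamma]) + \mu_{\bar\rho}([\xi\gamma]) = i_{\bar\rho}([\gamma])$ by Proposition \ref{summult}.

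To verify the formula for arbitrary $(\t,w,\psi,\t^+)$, I would decompose $[\bar{\sigmaG}(\t)\otimes\bar{\sigma_w}]$ into irreducibles of $\Gamma$ using Proposition \ref{reductypeirr} and Lemma \ref{decsym}, then match each $\{\gamma,\xi\gamma\}$ pair appearing in the decomposition with the corresponding contribution to $e(R^{\square,\psi}(w,\t^{ds},\bar\rho)/\pi)$, showing that the choice of $\sigmaG(\t)$ dictated by $\t^+$ selects precisely the half that is counted by $\mu_{\bar\rho}$. This combinatorial matching is the technical core of the argument and follows the pattern of the proof of Theorem \ref{mainQp}. The second assertion of the conjecture then drops out from Propositions \ref{dim1} and \ref{ordinary}: when $\bar\rho$ is not a twist of an extension of trivial by cyclotomic, $\mu_{\bar\rho}([\delta])=0$ for every one-dimensional $\delta$, so $\mu_{\bar\rho}([\gamma])=\mu_{\bar\rho}([\xi\gamma])$ on all irreducibles and Corollary \ref{samemult} applies.

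The main obstacle is the intermediate step for $K\neq\Q_p$ and $w\neq w_0$: one needs a higher-weight analogue of Theorem \ref{BMquatbisK}, which in turn requires either a generalization of the Gee-Geraghty $R=\T$ theorem to higher quaternionic weights combined with the geometric Breuil-M\'ezard conjecture of \cite{EG}, or direct progress on the Breuil-M\'ezard conjecture for $\GL_2(K)$ in this generality. The extension to the twist-of-trivial-by-cyclotomic case when $p<5$ would also require separate arguments, as in Theorem \ref{BMquatQp}.
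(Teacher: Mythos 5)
This statement is stated in the paper as a \emph{conjecture}: the paper gives no proof of it in this generality, and only establishes it in the cases covered by Theorems \ref{mainQp} and \ref{mainK} ($K=\Q_p$ with arbitrary $w$, or general $K$ with $w=w_0$ and $\bar\rho$ not a twist of an extension of the trivial character by the cyclotomic character). So your honest framing — a reduction to a hypothetical generalization of Theorems \ref{BMquatbisQp} and \ref{BMquatbisK} — is the right register, and your definition of $\mu_{\bar\rho}$ on irreducibles, the identity $\mu_{\bar\rho}([\gamma])+\mu_{\bar\rho}([\xi\gamma])=i_{\bar\rho}([\gamma])$, and the deduction of the last assertion from Propositions \ref{ordinary}, \ref{dim1} and Corollary \ref{samemult} all match the paper's architecture.

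However, there is a genuine gap at what you call the technical core. You propose to verify the formula for arbitrary $(\t,w,\psi,\t^+)$ by decomposing $[\bar{\sigmaG}(\t)\otimes\bar{\sigma_w}]$ into irreducibles and ``matching'' each pair $\{\gamma,\xi\gamma\}$ against the $\t^{ds}$-multiplicity, using Proposition \ref{summult}. That cannot work as a purely local/combinatorial step: Proposition \ref{summult} only controls the \emph{sum} $e(R^{\square,\psi}(w,\t^+,\bar{\rho})/\pi)+e(R^{\square,\psi}(w,\t^-,\bar{\rho})/\pi)$, and nothing in the quaternionic Breuil--M\'ezard statement for $\t^{ds}$-rings tells you how that sum splits between the two conjugate extended types — indeed the split is genuinely asymmetric when $\bar\rho$ is a twist of an extension of the trivial character by the cyclotomic character (see Lemma \ref{multQp} and Theorem \ref{congruences}), so no rule symmetric in $\gamma\leftrightarrow\xi\gamma$ can produce it. In the paper this splitting is obtained globally: one patches quaternionic modular forms equipped with the extra operator $W_{v_0}$ of Paragraph \ref{heckep}, decomposes $M_\infty=M_\infty^+\oplus M_\infty^-$ according to its eigenvalues, uses Lemma \ref{eigenvalueexttype} to see that the action on $M_\infty^{\pm}$ factors through $R_\infty^{\pm}$, and then combines Proposition \ref{equality} with the inequality $e(M_{\infty}^{\pm}/\pi,R_{\infty}^{\pm}/\pi)\leq e(R_{\infty}^{\pm}/\pi)$ and Proposition \ref{summult} to force equality on each half; the identification of the answer with $\mu_{\bar\rho}$ then uses the independence of the patched modules $\bar M_{\infty,\gamma}$ from the type data. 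Any proof of the conjecture along these lines would need, besides the generalized quaternionic Breuil--M\'ezard input you isolate, this automorphic mechanism (existence of suitable modular lifts as in Proposition \ref{existsmodform} and the $W_{v_0}$-refined patching), which your proposal omits or at best gestures at; as written, the ``combinatorial matching'' step would fail.
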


\section{Quaternionic modular forms}
\label{quatmodforms}

\subsection{Global setting}
\label{globalsetting}

Let $F$ be a totally real number field such that for all places $v
\mid p$, $F_v$ is isomorphic to $K$. We denote by $\Sigma_p$ the set of
places above $p$, and we assume that the number of infinite places of $F$ 
has the same parity as the
cardinality of $\Sigma_p$.
Let $B$ be the quaternion algebra with center $F$ that is
ramified exactly at the infinite places of $F$ and at $\Sigma_p$, which
exists thanks to the parity condition.

For all $v \in \Sigma_p$, we fix an isomorphism between $B_v$ and the
quaternion algebra $D$ of Section \ref{automorphic}. 
For any finite place $v$
of $F$ that is not in $\Sigma_p$, fix an isomorphism between $B_v$ and
$M_2(F_v)$ so that $\O_{B_v}^{\times}$ corresponds to $\GL_2(\O_{F_v})$.
We fix $v_0 \in \Sigma_p$ and denote $\Sigma_p\setminus\{v_0\}$ by
$\Sigma_p'$.

Let $\varpi_K$ be a uniformizer of $K$.
We denote by $\G$ the group $\G_{\varpi_K}$ of Paragraph \ref{GpiK}.
We fix a uniformizer $\varpi_D$ of $D$ with $\varpi_D^2 = \varpi_K$.

\subsection{Modular forms}
\label{modular}
We recall the theory of quaternionic modular forms (see for example
\cite[Section 1]{Tayb}, and also \cite[Section 4.1]{Kha} 
and \cite[Section 2]{GS} for
the situation with a quaternion algebra ramified at $p$).

Denote by $\A_F^f \subset \A_F$ the ring of finite adeles of $F$.
Let $U = \prod_vU_v$ 
be a compact open subgroup of $(B \otimes_F \A_F^f)^{\times}$ such that for
all finite places $v$, $U_v \subset \O_{B_v}^{\times}$, and for all $v \in
\Sigma_p$, $U_v = \O_{B_v}^{\times}$.

Let $A$ be a topological $\Z_p$-algebra. For all $v \mid p$,
let $(\sigma_v,V_v)$ be a representation of $U_v$ on a finite free
$A$-module.
We define a representation $\sigma$ of $U$ on $V = \otimes_{v\mid p}V_v$
by letting $U_v$ act by $\sigma_v$ for $v\mid p$
and letting $U_v$ act trivially for $v \nmid p$.
Let $\eta$ be a continuous character $(\A_F^f)^{\times}/F^{\times} \to A^{\times}$ such that 
for all $v$, the restriction of $\sigma$ and of $\eta$ to $U_v \cap
\O_{F_v}^{\times}$ coincide (such a character does not necessarily exist).

Let $S_{\sigma,\eta}(U,A)$ be the set of continuous functions
$f : B^{\times}\backslash (B\otimes_F \A_F^f)^{\times} \to V$ such that:
\begin{itemize}
\item
for all $g \in (B\otimes_F \A_F^f)^{\times}$ and $u \in U$, $f(gu) =
\sigma(u)^{-1}f(g)$

\item
for all $g \in (B\otimes_F \A_F^f)^{\times}$ and $z \in (\A_F^f)^{\times}$, $f(gz) =
\eta(z)^{-1}f(g)$
\end{itemize}

We can extend the action of $U$ on $(\sigma,V)$ to an action of
$U(\A_F^f)^\times$: we let $(\A_F^f)^{\times}$ act via $\eta$. We say that $U$ is small
enough (see for example \cite{Kis09a}, Paragraph 2.1.1) 
if for all $t\in (B\otimes_F\A_F^f)^{\times}$, $(U(\A_F^f)\cap
t^{-1}D^{\times}t)/F^{\times} = 1$. In this case, the functor $(\sigma,V) \mapsto
S_{\sigma,\eta}(U,A)$ is exact in $(\sigma,V)$.
In the following we will always assume that $U$ is small enough.

Let now $(\tilde{\sigma}_{v_0},V_{v_0})$ be a representation of $\G$
with coefficients in  
$A$, and for $v\in \Sigma_p'$, let $(\sigma_v,V_v)$ be a
representation of $U_v \simeq \O_D^{\times}$ as before. Let
$\tilde{\sigma}$ be the representation
$\tilde{\sigma}_{v_0}\otimes(\otimes_{v\in\Sigma_p'}\sigma_v)$ 
of $\G \times(\prod_{v\in \Sigma_p'}U_v)$ on
$\otimes_{v\in \Sigma_p}V_v$. Let $\sigma_{v_0}$ be the restriction of
$\tilde{\sigma}_{v_0}$ to $U_{v_0} = \O_D^{\times}$, we define as before
$\sigma$ a representation of $U$ on $\otimes_{v\in \Sigma_p}V_v$, and we
suppose that the character $\eta$ exists. We define a space of modular
forms $S_{\tilde{\sigma},\eta}(U,A)$ by setting
$S_{\tilde{\sigma},\eta}(U,A) = S_{\sigma,\eta}(U,A)$. We will endow the
space $S_{\tilde{\sigma},\eta}(U,A)$ with an additional structure (a
Hecke operator at $v_0$) in Paragraph \ref{heckep}.

\subsection{Hecke algebra}
\label{heckealg}

The group $(B\otimes_F\A_F^f)^{\times}$ acts on the set of functions on
$(B\otimes_F\A_F^f)^{\times}$ by $g\cdot f(z) = f(zg)$. 

Let $S$ be a finite set of places of $F$ containing all places above $p$
and all $v$ such that $U_v$ is not $\O_{B_v}^{\times}$, and $S'\subset S$ the set of
places $w$ such that $U_w$ is not $\O_{B_w}^{\times}$.  Let $T_S =
\Z[T_v,S_v,U_{\varpi_{w}}]_{v\not\in S,w\in S'}$ be a polynomial ring. We define an action
of $T_S$ on $S_{\sigma,\eta}(U,A)$ by:
\begin{itemize}
\item $T_v$ is the action of $U_v\smat{\varpi_v}001 U_v$.
\item $S_v$ is the action of $U_v\smat{\varpi_v}00{\varpi_v} U_v$.
\item $U_{\varpi_{w}}$ is the action of $U_w\smat{\varpi_w}001 U_w$.
\end{itemize} 
where $\varpi_v$ is a uniformizer of $F_v$. The actions of these
operators commute, and the definition of the Hecke
operators $T_v$ and $S_v$ does not depend on the choice of $\varpi_v$.

Let $\T_{\sigma,\eta}(U,A)$ be the $A$-algebra generated by the image of
$T_S$ in the ring of endomorphisms of $S_{\sigma,\eta}(U,A)$. 

\subsection{Hecke operators at places above $p$}

\subsubsection{Hecke operators}
\label{heckep}

We fix a representation $\tilde{\sigma}_{v_0}$ of $\G$, and
representations $\sigma_v$ of $U_v$ for $v\in \Sigma_p'$ as in 
Paragraph \ref{modular}. Consider the space of modular forms 
$S_{\tilde{\sigma},\eta}(U,A)$ of Paragraph \ref{modular}. 

We define an operator $W_{v_0}$ acting on
$S_{\tilde{\sigma},\eta}(U,A)$ by $(W_{v_0}f)(g) =
\tilde{\sigma}_{v_0}(\iota)f(g\varpi_{D,v_0})$
where $\varpi_{D,v_0}$ is the element of $(B\otimes_F\A_F^f)^{\times}$ that is
equal to $\varpi_D$ at
$v_0$ and $1$ everywhere else. 
One checks easily that $W_{v_0}f$ is indeed an element of
$S_{\tilde{\sigma},\eta}(U,A)$ if $f$ is.
Note that $W_{v_0}^2$ is multiplication by
$\eta(\varpi_{K,v_0})^{-1}$, where $\varpi_{K,v_0} = \varpi_{D,v_0}^2$.
It is clear from the definition that $W_{v_0}$ commutes with the action of
the Hecke algebra $\T_{\sigma,\eta}(U)$.

Suppose that $A$ contains a square root $\alpha$ of
$\eta(\varpi_{K,v_0})^{-1}$.
Then we get a decomposition $S_{\tilde{\sigma},\eta}(U) =
S_{\tilde{\sigma},\eta}(U,A)^{+}\oplus
S_{\tilde{\sigma},\eta}(U,A)^{-}$, where
$S_{\tilde{\sigma},\eta}(U,A)^{\pm}$ denotes the subspace of
$S_{\tilde{\sigma},\eta}(U,A)$
where $W_{v_0}$ acts by $\pm \alpha$.
If we replace $\sigma_{v_0}$ by $\xi\sigma_{v_0}$ without changing $\alpha$,
the space of modular forms $S_{\tilde{\sigma},\eta}(U,A)$ is unchanged,
$W_{v_0}$ is replaced by $-W_{v_0}$ and the $+$ and $-$ subspaces are
exchanged.

\subsubsection{The case of type $\red$}
\label{red}

Consider the following special case: let $(\sigma_{v_0},V_{v_0})$ be a
representation of $U_{v_0} \simeq \O_D^{\times}$ over $A$, and $\sigma_{v_0}'$ the
representation on $V_{v_0}$
defined by $\sigma_{v_0}'(g) = \sigma_{v_0}(\varpi_Dg\varpi_D^{-1})$. We can define a
representation $(\tilde{\sigma}_{v_0},\tilde{V}_{v_0})$ of $\G$ by
$\tilde{V}_{v_0}
= V_{v_0} \oplus V_{v_0}$, $U_{v_0}$ acts by $(\sigma_{v_0},\sigma_{v_0}')$ and $\iota$ acts
by $\tilde{\sigma}_{v_0}(x,y) = (y,x)$. Fix representations at places
$v\in \Sigma_p'$, a character $\eta$ and representations $\sigma$ and
$\tilde{\sigma}$ as in Paragraph \ref{modular}. 

Let  $\alpha$ be a square root of $\eta(\varpi_{K,v_0})^{-1}$.
We have two embeddings $i_+,i_- : S_{\sigma,\eta}(U,A) \to
S_{\tilde{\sigma},\eta}(U,A)$ given by $i_{\pm}(f)(g) =
(f(g),\pm\alpha^{-1}\tilde{\sigma}_{v_0}(\iota)f(g\varpi_{D,v_0}))$. 
The image of $i_{\pm}$ is 
$S_{\tilde{\sigma},\eta}(U,A)^{\pm}$ and $i_++i_-$ is a isomorphism
from $S_{\sigma,\eta}(U,A)^2$ to $S_{\tilde{\sigma},\eta}(U,A)$.

We will make use of this remark in the following situation:
$\tilde{\sigma}_{v_0}$ is
of the form $\sigmaG(\t) \otimes\sigma_{alg}$ for $\sigma_{alg}$ the restriction to $\G$ 
of some algebraic representation of $\GL_2$ (by an embedding as in Paragraph \ref{realization}), 
$\t$ is an inertial type of the form
$\red$ and $\sigmaG(\t)$ is the $\G$-representation attached to $\t$ in
Paragraph \ref{GpiK}. 

\subsection{Galois representations attached to quaternionic modular forms}

\subsubsection{General results}
\label{galoismodform}

Suppose now that $A$ is a $p$-adic field $E$ containing the unramified
quadratic extension $K'$ of $K$ and a square root $\sqrt{\varpi_K}$ of
$\varpi_K$. Then there is an embedding $\tilde{u}$ of $\G$ into $\GL_2(E)$ as in
Paragraph \ref{realization}.
Suppose that for all $v \mid p$, the representation $\sigma_v$ of Paragraph
\ref{modular} is of the form $\sigma_{v,alg} \otimes \sigma_{v,sm}$, where
$\sigma_{v,sm}$ is a smooth representation of $U_v$, and 
$\sigma_{v,alg}$ is the restriction to $U_v$ of an algebraic
representation of $\GL_2$ via $\tilde{u}|_{\O_D^{\times}}$.
We always assume that either $K=\Q_p$
or $\sigma_{v,alg}$ is trivial for all $v$.
If $K=\Q_p$, $\sigma_{v,alg}$ is the restriction of a representation
of the form $\Sym^{n_v}E^2\otimes\det^{m_v}$
and $k = n_v+2m_v+1$ does not depend on $v$.

We recall the construction and properties of Galois representations
associated to eigenforms in 
$S_{\sigma,\eta}(U,E)$. See for example \cite[Paragraph 3.1.14]{Kis09b}
for the link between these spaces of modular forms
and the classical spaces of automorphic representations, from which we
deduce the properties of the Galois representations attached to them.
Choose embeddings $i_p$, $i_{\infty}$ of $E$ into $\bar{\Q}_p$ and
$\C$ respectively. 

Let $\sigma_{p,alg} = \otimes_v\sigma_{v,alg}$ and $\sigma_{p,sm} =
\otimes_v\sigma_{v,sm}$. Let $\sigma_{\C,alg} = \sigma_{alg}\otimes_E\C$
and $\sigma_{\C,sm} = \sigma_{sm}\otimes_E\C$ and $\sigma_{\C} =
\sigma_{\C,alg}\otimes\sigma_{\C,sm}$, acting on the space $W_{\C}$. Then 
$\sigma_{\C,alg}$ can be viewed as a representation of
$B_{\infty}^{\times} = (B\otimes_{\Q}\mathbb{R})^{\times}$, and
$\sigma_{\C,sm}$ is a smooth representation of $U_p = \otimes_v U_v$. Let
$U'_p$ be a compact open subgroup of $U_p$ contained in $\ker
\sigma_{p,sm}$, and $U'$ the compact subgroup of
$(B\otimes_F\A_F)^{\times}$ which is the same as $U$ but with $U_p$
replaced by $U'_p$.

Let $C^{\infty}(B^{\times}\backslash (B\otimes_{F}\A_{F})^{\times}/U')$ the space of 
smooth functions with values in $\C$. It is endowed with a right action of
$B_{\infty}^{\times}$.

We denote by $(\sigma^{\vee},W^{\vee})$ the dual of a 
representation $(\sigma,W)$.
Let $\phi : S_{\sigma,\eta}(U,E) \to
\Hom_{B_{\infty}^{\times}}(W_{\C}^{\vee},C^{\infty}(B^{\times}\backslash
(B\otimes_{F}\A_{F})^{\times}/U'))$
be the map defined by $\phi(f) = w \mapsto (x \mapsto
w(\sigma_{\C,alg}(x_{\infty})^{-1}\sigma_{p,alg}(x_p)f(x^{\infty})))$,
where $x = (x^{\infty},x_{\infty}) \in (B\otimes_F\A_F^f)^{\times}\times
B_{\infty}^{\times}$.  We denote by $\phi_w(f) \in
C^{\infty}(B^{\times}\backslash (B\otimes_{F}\A_{F})^{\times}/U')$ the
element $\phi(f)(w)$ for $w \in W_{\C}^{\vee}$.

Let $\pi = \otimes_v\pi_v$ be the irreducible automorphic representation
of $B^{\times}$ generated by some non-zero
$\phi_w(f)$ for $w \in W_{\C}^{\vee}$ and $f \in S_{\sigma,\eta}(U,E)$
that is an eigenform for $T_S$.  Then $\pi_{\infty}$ is isomorphic to
$W^{\vee}_{alg,\C}$, and has central character $\eta_{\C}(z) =
N_{F/\Q}(z_{\infty})^{1-k}N_{F/\Q}(z_p)^{k-1}\eta(z_p)^{-1}$.
Let $\rho_f : G_F  \to \GL_2(\bar{\Q}_p)$ be the Galois representation
attached to $\pi$, so that for all $v$ not in $S$, the characteristic
polynomial $\rho_f(\Frob_v)$ is $X^2-t_vX+N(v)s_v$, where $\Frob_v$ is an
arithmetic Frobenius at $v$, and $t_v$ and $s_v$ are the eigenvalues of
the Hecke operators $T_v$ and $S_v$ acting on $f$.
Then 
$\rho_f$ has determinant $\eps\eta$
and for all $v\mid p$, $\rho_f|_{G_{F_v}}$ is potentially semi-stable with
Hodge-Tate weights $(m_v,m_v+n_v+1)$ if $K=\Q_p$ and
$(0,1)_{\tau\in\Hom(K,\bar{\Q}_p)}$ otherwise, and 
$\WD(\rho_f|_{G_{F_v}})^{F-ss}$ is isomorphic to $\LL_D(\pi_v^{\vee})$.

\subsubsection{Structure at $p$}
\label{structurep}

Let $v$ be in $\Sigma_p$.
Let $\varphi : \sigma_{\C}^{\vee} \to \pi$ be given by $w \mapsto \phi_w(f)$,
and $\varphi_v : \sigma_{\C}^{\vee} \to \pi_v$ be the projection to $\pi_v$.
It is a non-zero $U_v$-equivariant morphism (where $U_v$ acts on
$\sigma_{\C}^{\vee}$ via its action by $\sigma_{v,sm}$), 
hence $\pi_v|_{U_v}$ contains some
irreducible constituent of $\sigma_{v,sm}^{\vee}$. In particular if
$\sigma_{v,sm}$ is a copy of representations $\sigma_D(\t_v)$ attached to some discrete series
inertial type $\t_v$ as in Paragraph \ref{typeD},  $\rho_f|_{G_{F_v}}$
is of type $\t_v$ by Proposition \ref{typesD}.

Fix $v_0 \in \Sigma_p$
and suppose that $\sigma_{v_0,sm}$ is in fact a
representation of $\G$. By the embedding $\tilde{u}$, the
representation $\sigma_{v_0,alg}$ also extends to a representation of
$\G$. Suppose that $W_{v_0}f = \alpha f$ where $W_{v_0}$ is the Hecke operator
defined in Paragraph \ref{heckep}. Let $\mu$ be the central character of
$\sigma_{v_0,alg}$.
We extend the representation $\sigma_{v_0,sm}^{\vee}$ of $\G$ 
to a representation of $B_{v_0}^{\times} = D^{\times}$ by 
$\sigma_{v_0,sm}^{\vee}\otimes\unr(\alpha\mu(\sqrt{\varpi_K}))$ (here
$\sigma_{v_0,sm}$ is seen as a representation of $D^{\times}$ via the
canonical map $D^{\times} \to \G$).
Then $\varphi_{v_0}$ is equivariant for the action of the group $D^{\times}$
so that $\pi_{v_0}$ is isomorphic to
$\sigma_{v_0,sm}^{\vee}\otimes\unr(\alpha\mu(\sqrt{\varpi_K}))$ if
$\sigma_{v_0,sm}$ is irreducible.
This gives the following:

\begin{lemm}
\label{eigenvalueexttype}
If $\sigma_{v_0,sm}=\sigmaG(\t)$ 
for some discrete series inertial type $\t$, $\sigma_{v_0,alg}$ has central
character $\mu$ and if $W_{v_0}f = \alpha f$ then
$\rho_f|_{G_{F_{v_0}}}$ is of extended type
$\LL_D(\sigmaG(\t))\otimes\unr(\alpha\mu(\sqrt{\varpi_K}))^{-1}$.
\end{lemm}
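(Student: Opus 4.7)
The plan is to obtain the statement as a direct consequence of the preceding paragraph. First I would check that $\sigmaG(\t)$ is irreducible as a representation of $\G$ so that the identification from Paragraph \ref{structurep} actually applies: this is immediate when $\t$ is of type $\scal$ or $\irr$ (the restriction to $\O_D^{\times}$ is already $\sigma_D(\t)$, which is irreducible), and when $\t$ is of type $\red$ it still holds since $\sigmaG(\t)$ is obtained by descent from the irreducible $D^{\times}$-representation $\pi_\t$. Consequently
\[
\pi_{v_0} \simeq \sigmaG(\t)^{\vee} \otimes \unr(\alpha\mu(\sqrt{\varpi_K}))
\]
as representations of $D^{\times}$.

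Next I would dualize to obtain $\pi_{v_0}^{\vee} \simeq \sigmaG(\t) \otimes \unr(\alpha\mu(\sqrt{\varpi_K}))^{-1}$, and then combine the compatibility $\WD(\rho_f|_{G_{F_{v_0}}})^{F-ss} \simeq \LL_D(\pi_{v_0}^{\vee})$ recalled at the end of Paragraph \ref{galoismodform} with the compatibility of $\LL_D$ with unramified twists (itself a consequence of the same property for $\rec$ and $\JL$, as already used in Proposition \ref{classredirr}). This yields
\[
\WD(\rho_f|_{G_{F_{v_0}}})^{F-ss} \simeq \LL_D(\sigmaG(\t)) \otimes \unr(\alpha\mu(\sqrt{\varpi_K}))^{-1}.
\]
Reading off the $W_{F_{v_0}}$-part then gives the claimed extended type, using that a discrete series extended type is automatically Frobenius semi-simple (by inspection of the four cases of Lemma \ref{classtypes}).

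The only subtle point, which I expect to be the main obstacle, is bookkeeping: one must verify that the unramified character of $D^{\times}$ used in Paragraph \ref{structurep} to extend $\sigmaG(\t)^{\vee}$ from $\G$ to $D^{\times}$ corresponds, under $\LL_D$, to the unramified character $\unr(\alpha\mu(\sqrt{\varpi_K}))^{-1}$ written in the statement. This reduces to tracking the $\|\cdot\|^{1/2}$ twist in the definition $\LL_D(\pi) = (\rec\circ \JL(\pi))\otimes\|\cdot\|^{1/2}$ together with the Artin reciprocity normalization fixed in Paragraph \ref{notation}; no new idea is required beyond assembling the constructions of Paragraphs \ref{lljl}, \ref{GpiK}, and \ref{structurep}.
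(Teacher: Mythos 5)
Your argument is correct and is essentially the paper's own: Lemma \ref{eigenvalueexttype} is stated there as a direct consequence of the identification $\pi_{v_0}\simeq\sigma_{v_0,sm}^{\vee}\otimes\unr(\alpha\mu(\sqrt{\varpi_K}))$ established in Paragraph \ref{structurep} (valid since $\sigmaG(\t)$ is irreducible as a representation of $\G$, including in the $(\text{red})$ case), combined with the compatibility $\WD(\rho_f|_{G_{F_{v_0}}})^{F-ss}\simeq\LL_D(\pi_{v_0}^{\vee})$ recalled at the end of Paragraph \ref{galoismodform} and the compatibility of $\LL_D$ with unramified twists. Your extra remarks (irreducibility in the $(\text{red})$ case, Frobenius semi-simplicity of discrete series Weil--Deligne representations) only make explicit details the paper leaves implicit.
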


\section{Proof of the main theorems}
\label{proof}

\subsection{Notation}
\label{notationproof}

In this section we fix $K$ and a continuous representation
$\bar{\rho} : G_K \to \GL_2(\F)$.

When $K = \Q_p$, we assume that 
$p \geq 5$ when $\bar{\rho}$ is a twist of an
extension of the trivial character by the cyclotomic character (we need
this condition to apply the results of \cite{Pas} and \cite{HT15}).

When $K \neq \Q_p$, we assume that $\bar{\rho}$ is not a twist of an
extension of the trivial character by the cyclotomic character, and
whenever a Hodge-Tate type $w$ appears we always mean $w = w_0$.

We fix $\varpi_K$ a uniformizer of $K$.

Let $\bar\psi$ be the character $\omega^{-1}\det\bar\rho$ of $G_K$, 
which we see also as a character
$K^\times$ via local class field theory. We fix
$\alpha\in\bar\F_p$ such that $\alpha^2 = \bar\psi(\varpi_K)^{-1}$.

For any irreducible representation $\gamma$ of $\Gamma = \Gamma_K$, we fix
an inertial type $\t_{\gamma}$ and a
representation $\sigmaG(\t_{\gamma})$ as in the proof of Proposition
\ref{liftistype}, 
a lift $\psi_{\gamma}$ of $\bar\psi$ that is compatible with
$\t_{\gamma}$ and $w_0$, 
and an 
extended type $\t_{\gamma}^+$ such that $\t^+_{\gamma}$ is compatible
with $(\t_{\gamma},\psi_{\gamma})$ and $\t_{\gamma}^+ =
\LL_D(\sigmaG(\t_{\gamma}))\otimes\unr(a_{\gamma})^{-1}$
for an $a_{\gamma}$ lifting $\alpha$.

\subsection{Definition of $\mu_{\bar\rho}$}
\label{defmu}

We are now able to define the linear form $\mu_{\bar\rho}$:
we define it to be the linear form on $\R(\Gamma)$ such that
$\mu_{\bar{\rho}}([\gamma]) =
e(R^{\square,\psi_{\gamma}}(w_0,\t_{\gamma}^{+},\bar{\rho})/\pi)$
for any irreducible representation $\gamma$ of $\Gamma$.
It is clear that 
$\mu_{\bar{\rho}}(\gamma) = 0$ if $i_{\bar\rho}(\gamma) = 0$.

We must now prove that $\mu_{\bar\rho}$ satisfies the properties claimed
in Theorems \ref{mainQp} and \ref{mainK}.

Let $\t$ be a discrete series inertial type, $w$ a Hodge-Tate type (with
$w = w_0$ if $K \neq \Q_p$), $\psi$ a lift of $\bar\psi$ that is
compatible with $\t$ and $w$, and
$\t^+$ an extended type compatible with $(\t,\psi)$.

If $R^{\square,\psi}(w,\t^{ds},\bar{\rho}) = 0$
then by the results of Paragraph \ref{reformulation}
we have that 
$\mu_{\bar\rho}([\bar\sigmaG(\t)\otimes \bar\sigma_w]) = 0$.
So we need only prove the equalities of Theorems \ref{mainQp} and
\ref{mainK} when
$R^{\square,\psi}(w,\t^{ds},\bar{\rho}) \neq 0$. 
This is the object of the rest of this Section.

\subsection{Global realization in characteristic $p$}
\label{residualrepr}

We start by realizing $\bar\rho$ in some global Galois representation.

\begin{prop}
\label{existsresrepr}
There exist a totally real field $F$ and a
continuous irreducible representation
$\bar{r} : G_F \to \GL_2(\f)$, such that:
\begin{enumerate}
\item
the number of places of $F$ above $p$ has the same parity as the number
of infinite places of $F$
\item
for all $v \mid p$, $F_v$ is isomorphic to $K$
\item
for all $v \mid p$, $\bar{r}|_{G_{F_v}} \simeq \bar{\rho}$
\item
$\bar{r}$ is unramified outside $p$
\item
$\bar{r}$ is totally odd
\item
$\bar{r}$ is modular
\item
the restriction of $\bar{r}$ to $G_{F(\zeta_p)}$ is absolutely
irreducible, and if $p=5$, $\bar{r}$ does not have projective image
isomorphic to $\text{PGL}_2(\F_5)$.
\end{enumerate}
\end{prop}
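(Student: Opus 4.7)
The plan is a two-stage construction: first a potential-modularity step producing an initial totally real globalization, and then a solvable base change to arrange the remaining local, parity and image conditions.

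First, I would invoke a standard potential-modularity result, such as those underlying the globalization lemmas that precede the main modularity lifting theorems of \cite{GK}, or the variant given in \cite[Corollary A.3]{EG}, to produce a totally real number field $F_0$, a modular continuous representation $\bar r_0 : G_{F_0} \to \GL_2(\f)$ that is unramified outside $p$, and a distinguished place $v_0 \mid p$ of $F_0$ with $F_{0,v_0} \simeq K$ and $\bar r_0|_{G_{F_{0,v_0}}} \simeq \bar{\rho}$. Such results are proved by applying Moret-Bailly's theorem to produce a suitable Hilbert-Blumenthal abelian variety whose mod $p$ Galois representation realizes $\bar{\rho}$ at a place of the correct residue degree and ramification, and the modularity of $\bar r_0$ follows from Serre's conjecture for totally real fields. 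The big-image condition (7) and total oddness (5) are then arranged by composing with an auxiliary totally real solvable extension linearly disjoint over $\Q$ from the composite of $F_0(\zeta_p)$ and the field cut out by the projective image of $\bar r_0$; because $\bar{\rho}$ is a given local representation, this disjointness does not disturb the local isomorphism at $v_0$.

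Second, I would choose a totally real solvable extension $F/F_0$ in which every place of $F_0$ above $p$ distinct from $v_0$ splits completely, while $v_0$ itself is allowed to split into places each still having completion isomorphic to $K$. This ensures condition (2), and preserves (3) at every place above $p$. If the parity in condition (1) does not match after this step, I would correct it by an additional totally real quadratic extension in which $p$ has the appropriate splitting behaviour. Modularity of $\bar r := \bar r_0|_{G_F}$ is preserved under solvable base change by Langlands' cyclic base change theorem, giving (6). Unramifiedness outside $p$, oddness, and the big-image hypothesis (7) descend automatically, provided the auxiliary extensions are chosen linearly disjoint from the relevant number fields, which is a standard Chebotarev argument.

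The main obstacle lies in the initial globalization: one must simultaneously enforce that $\bar r_0$ is unramified outside $p$, that it is modular, and that at a specified place $v_0$ its local behaviour coincides with $\bar{\rho}$, while controlling the local field $F_{0,v_0}$. All these requirements can be met by the available potential-modularity machinery, but each condition must be tracked through the Moret-Bailly argument. Once $(F_0,\bar r_0)$ is in hand, every remaining condition is a routine consequence of solvable base change and disjointness arguments, so no further conceptual input is needed.
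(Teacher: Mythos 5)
There is a genuine gap in your second stage. Making every place of $F_0$ above $p$ other than $v_0$ split completely in $F/F_0$ does not alter anything at those places: if $w \mid v$ with $v$ split completely, then $F_w = F_{0,v}$ and $\bar r|_{G_{F_w}} = \bar r_0|_{G_{F_{0,v}}}$. Since your first stage only guarantees $F_{0,v_0} \simeq K$ and $\bar r_0|_{G_{F_{0,v_0}}} \simeq \bar\rho$ at the single place $v_0$, conditions (2) and (3) will in general fail at the places of $F$ lying over the other $v \mid p$, and no further base change can repair this, because completions can only grow under extension (and the local restriction of $\bar r$ at such places is not controlled at all). So either your initial globalization must already produce $F_{0,v} \simeq K$ and $\bar r_0|_{G_{F_{0,v}}} \simeq \bar\rho$ at \emph{every} place above $p$ --- which is exactly what \cite[Corollary A.3]{GK} provides, and is what the paper cites directly --- or your argument does not prove the statement. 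With the correct citation, the entire solvable base change apparatus at $p$ becomes unnecessary; the only point left is the parity condition (1), which the paper handles not by a quadratic extension a posteriori but by noting that the construction in \cite{GK} passes through an auxiliary totally real field $E$ with $E_v \simeq K$ for all $v\mid p$, and imposing $2[K:\Q_p] \mid [E:\Q]$ so that both the number of infinite places and the number of $p$-adic places of the resulting $F$ are even. (Your quadratic-extension fix, with all places above $p$ split so that both counts double, would also work and is compatible with (2) and (3).)

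Two smaller points: total oddness and the large-image condition (7) cannot be ``arranged'' by passing to a solvable extension --- base change can only preserve them (oddness comes for free from modularity over a totally real field, and (7) is preserved by choosing the extension linearly disjoint from the field cut out by $\bar r_0$ and from $F_0(\zeta_p)$, as you say); they must be built into the first-stage globalization, which again is part of what \cite[Corollary A.3]{GK} asserts. Also the reference \cite{EG} does not contain the globalization result you want; the relevant statement is in the appendix of \cite{GK}.
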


\begin{proof}
All conditions except the first follow from Corollary A.3 of
\cite{GK}. We can ensure that the first condition is satisfied by taking
an $F$ such that the number of places of $F$ above $p$ and the number of
infinite places of $F$ are both even. Indeed, note that the proof of
Corollary A.3 of \cite{GK} starts (in Proposition A.1) by considering an
auxiliary number field $E$ which is totally real and such that for all $v
\mid p$, $E_v$ is isomorphic to $K$, and the $F$ we get is a finite
extension of $E$. If we impose to $E$ the additional condition that
$2[K:\Q_p]$ divides $[E:\Q]$, then the parity condition will be
satisfied.
\end{proof}

\subsection{Global realizations in characteristic $0$}

\subsubsection{Global data}
\label{globdata}

From now on we fix a field $F$ and a representation $\bar{r}$ satisfying
the conditions of Proposition \ref{existsresrepr}.

Let $\Sigma_p$ be the set of places of $F$ above $p$. We fix a
$v_0 \in \Sigma_p$ and denote $\Sigma_p\setminus\{v_0\}$ by $\Sigma_p'$
as before.

Let $B$ be the quaternion algebra with center $F$ that is ramified
exactly at the infinite places of $F$ and at all places in $\Sigma_p$.
Such a $B$ exists thanks to condition (1) of Proposition
\ref{existsresrepr}.
Let $D$ be the non-split quaternion algebra over $K$. 
Let $\O_B$ be a maximal order in $B$.
For all $v$ not dividing $p$, we fix an isomorphism $\O_{B_v}^{\times} \simeq
\GL_2(\O_{F_v})$, and for all $v \in \Sigma_p$, we fix an isomorphism
$\O_{B_v}^{\times} \simeq \O_D^{\times}$.

Let $\varpi_D$ a uniformizer of
$D = B_{v_0}$ such that $\varpi_D^2 = \varpi_K$
where $\varpi_K$ is our fixed uniformizer of $K$.
We set $\G = \G_{\varpi_K}$.

We choose an auxiliary place $v_1 \nmid p$ such that $Nv_1 \neq 1 \pmod
p$, the ratio of the eigenvalues of $\bar{r}(\Frob_{v_1})$ is not
$Nv_1^{\pm 1}$, and the characteristic of $v_1$ is large enough so that
for any quadratic extension $F'$ of $F$ and any $\zeta$ a root of unity
in $F'$, $v_1 \nmid \zeta+\zeta^{-1}-2$. The existence of such a place
$v_1$ follows from \cite{DDT}, Lemma 4.11 and \cite{Kis09a}, Lemma 2.2.1.

We let $U$ be the compact open subgroup of $(B\otimes_F\A_F^f)^{\times}$ such that
$U_v = \O_D^{\times}$ for $v \in \Sigma_p$, $U_v = \GL_2(\O_{F_v})$ for $v
\not\in \Sigma_p$ and $v\neq v_1$, and finally $U_{v_1}$ is the set of elements
of $\GL_2(\O_{F_{v_1}})$ that are upper triangular unipotent modulo $v_1$. 
The last condition we imposed on $v_1$ ensures that $U$ is small enough in 
the sense of Paragraph \ref{modular} (see \cite{Kis09a}, Section 2.1.1.).

\subsubsection{Modular lift}
\label{modularlift}

We want know to show that the representation $\bar{r}$ can be lifted
to an appropriate modular Galois representation.

\begin{lemm}
\label{existsmodformirr} 
For all $v\in \Sigma_p$, let $\t_v$ be an inertial type such that
$\gamma_v = \sigma_D(\t_v)$ is an irreducible representation of $\ell^{\times}$, and
$\psi_v$ be a character of $G_{F_v}$. Suppose that the ring
$R_{p} = 
\hat{\otimes}_{v\in \Sigma_p}R^{\square,\psi_v}(w_0,\t_v^{ds},\bar{\rho})$
is not zero.
Let $\sigma = \otimes_{v\in \Sigma_p} \sigma_D(\t_v)$.
Then there exists $\eta$ satisfying the compatibility conditions with $\sigma$
of Paragraph \ref{modular}, and which restricts to $\psi_v$ on
$F_{v}^{\times}$ for all $v \in \Sigma_p$, and 
the space of modular forms $S_{\sigma,\eta}(U,\O)$
contains an eigenform $f$
whose associated Galois representation
has its reduction modulo $p$ isomorphic to $\bar{r}$.
\end{lemm}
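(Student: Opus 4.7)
The plan is to first construct $\eta$ by global class field theory and then use the Taylor-Wiles-Kisin patching argument together with modularity lifting to transfer the nonvanishing of $R_p$ to the nonvanishing of a localized space of modular forms containing the desired eigenform.

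First, I would produce a continuous character $\eta : (\A_F^f)^{\times}/F^{\times} \to \O^{\times}$ lifting $\omega^{-1}\det\bar r$ (viewed as a global character via class field theory) with $\eta|_{F_v^{\times}} = \psi_v$ for all $v\in\Sigma_p$, unramified outside $\Sigma_p\cup\{v_1\}$, and with controlled behavior at $v_1$. Such an $\eta$ exists because the local characters $\psi_v$ at $v\in\Sigma_p$ all lift the same global reduction $\omega^{-1}\det\bar r|_{G_{F_v}}$, and one has enough freedom at $v_1$ to absorb the residual obstruction in the totally real setting. The required compatibility with $\sigma$ on $U_v\cap\O_{F_v}^{\times}$ in the sense of Paragraph \ref{modular} then follows from Remark \ref{indepdet} applied to each ring $R^{\square,\psi_v}(w_0,\t_v^{ds},\bar\rho)$, which is nonzero by assumption: the resulting relation $\psi_v|_{I_{F_v}} = \det\t_v$ matches $\eta|_{\O_{F_v}^{\times}}$ with the central character of $\sigma_D(\t_v)$.

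Next, I would set up the global deformation problem for $\bar r$ of determinant $\eta\eps$, imposing at each $v\in\Sigma_p$ the local condition cut out by $R^{\square,\psi_v}(w_0,\t_v^{ds},\bar\rho)$, an appropriate minimal condition at $v_1$, and unramifiedness outside $\Sigma_p\cup\{v_1\}$. Let $\m\subset\T_{\sigma,\eta}(U,\O)$ be the maximal ideal associated to $\bar r$ via $T_v - \operatorname{tr}\bar r(\Frob_v)\in\m$ and $Nv\cdot S_v - \det\bar r(\Frob_v)\in\m$ for $v\notin\Sigma_p\cup\{v_1\}$. The Taylor-Wiles-Kisin patching construction, valid thanks to condition (7) of Proposition \ref{existsresrepr} and executed in the quaternionic setting as in \cite{Kis09b} and \cite{GK}, produces a complete local $\O$-algebra $R_{\infty}$ formally smooth over $R_{p}\,\hat{\otimes}_{\O}\,R_{v_1}^{\square}$, together with a patched module $M_{\infty}$ that is finite faithful maximal Cohen-Macaulay over $R_{\infty}$ and such that $M_{\infty}/\mathfrak{a} M_{\infty}$ recovers $S_{\sigma,\eta}(U,\O)_{\m}$ for the patching ideal $\mathfrak{a}$.

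To conclude, since $R_p\neq 0$ by hypothesis and the framed deformation ring $R_{v_1}^{\square}$ is nonzero, we have $R_{\infty}\neq 0$; by faithfulness $M_{\infty}\neq 0$, hence $S_{\sigma,\eta}(U,\O)_{\m}\neq 0$. Any eigenform $f$ in this localized space gives, after possibly extending coefficients, a Galois representation $\rho_f$ with $\bar{\rho}_f\simeq\bar r$ by the very definition of $\m$, and with $\rho_f|_{G_{F_v}}$ of inertial type $\t_v$ at each $v\in\Sigma_p$ by local-global compatibility as recalled in Paragraph \ref{structurep}. The main obstacle is the verification that the patched module $M_{\infty}$ is indeed faithful over the whole of $R_{\infty}$ in our setting where $B$ ramifies at all places above $p$: this is what ultimately allows us to deduce the existence of modular forms from the nonvanishing of a purely local Galois deformation ring. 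It relies on a careful choice of Taylor-Wiles primes and on the regularity properties of the auxiliary local deformation rings, but the necessary ingredients are all available from the machinery of \cite{Kis09b}, \cite{GK}, and \cite{Kha}.
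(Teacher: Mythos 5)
Your construction of $\eta$ is in line with what the paper does (it defers to \cite[Paragraph 5.4.1]{GK}), but the core of your argument has a genuine gap, and in the context of this paper it is circular. You deduce $S_{\sigma,\eta}(U,\O)_{\m}\neq 0$ from the claims that the patched module $M_{\infty}$ is nonzero and \emph{faithful} over $R_{\infty}$. Neither claim comes for free from Taylor--Wiles--Kisin patching. First, $M_{\infty}$ is built by patching the very spaces $S_{\sigma,\eta}(U_n,\O)_{\m_n}$ whose nonvanishing is in question; if $\bar r$ were not modular of this particular quaternionic type $\sigma=\otimes_v\sigma_D(\t_v)$ (condition (6) of Proposition \ref{existsresrepr} only gives modularity of \emph{some} weight and level), then $M_{\infty}=0$, and no amount of nonvanishing of $R_p$ fixes that. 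Second, even granting $M_{\infty}\neq 0$, the standard dimension/depth argument only shows that the support of $M_{\infty}$ is a \emph{union of irreducible components} of $\spec R_{\infty}$; that it is \emph{all} of $\spec R_{\infty}$ is exactly Proposition \ref{equality} of the paper, whose proof uses the Breuil--M\'ezard input and, crucially, the fact that the support of $M_0$ meets every component of $\spec R_{\infty}[1/p]$ --- a fact the paper establishes via Proposition \ref{existsmodform}, which rests on the very lemma you are trying to prove. So asserting faithfulness here assumes the conclusion.

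The paper's route is different and avoids this: it invokes Gee's ``automorphic lifts of prescribed types'' (Corollary 3.1.7 of \cite{G}), which packages the modularity-lifting and type-changing arguments to produce a modular lift $r$ of $\bar r$, unramified outside $p$, with determinant $\eta\eps$ and potentially crystalline of Hodge--Tate weights $(0,1)$ and the prescribed discrete series type at each $v\in\Sigma_p$; the Jacquet--Langlands correspondence and Proposition \ref{typesD} then place the corresponding eigenform in $S_{\sigma,\eta}(U,\O)$. There is also a case you do not address at all: when some $\gamma_v=\sigma_D(\t_v)$ has dimension $1$ (so $\t_v$ is scalar and $\bar\rho$ is a twist of an extension of the trivial character by the cyclotomic character), the points of $\spec R^{\square,\psi_v}(w_0,\t_v^{ds},\bar\rho)[1/p]$ are potentially semi-stable but \emph{not} potentially crystalline, so Gee's potentially crystalline lifting result does not apply, and the paper has to appeal instead to \cite[Th\'eor\`eme 3.2.2]{BD} to produce a lift that is non-crystalline at those places. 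To repair your proof you would need to replace the faithfulness assertion by an actual lifting theorem of prescribed type (as the paper does), handling this scalar-type case separately.
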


\begin{proof}
For the existence and construction of $\eta$, 
see Paragraph 5.4.1. of \cite{GK}.

We now prove the existence of the eigenform $f$.

Suppose first that none of the $\t_v$ is scalar.
Then each $R^{\square,\psi_v}(w_0,\t_v^{ds},\bar{\rho})$
parametrizes only potentially crystalline representations.
Corollary 3.1.7 of \cite{G} applied to our situation gives that
for each irreducible component of $\spec R_{p}[1/p]$, there exists
a lift $r$ of $\bar{r}$ that is modular, unramified outside $p$, with
determinant $\eta\eps$, potentially crystalline with Hodge-Tate
weights $(0,1)$ at each $v\in \Sigma_p$ and for each $F_v \to \bar{\Q}_p$ 
and defining a point on the given irreducible component. 
The representation $r$ comes from some automorphic form on a quaternion
algebra over $F$. Thanks to the local conditions on $r$, 
we can suppose that $r$ comes from a modular form $f$
on $B$, and that $f \in S_{\sigma,\eta}(U,\O)$.
This proves the claim in this case, and in particular whenever
$\bar{\rho}$ is not isomorphic to a twist of $\mat {\omega}*01$ by
Proposition \ref{dim1}.

When $\bar{\rho}$ is isomorphic to a twist of $\mat {\omega}*01$
(in the case $K=\Q_p$), Corollary 3.1.7 of \cite{G} is not enough: for 
$\dim\gamma_v = 1$, we need points on 
$\spec R^{\square,\psi_v}(w_0,\t_{v}^{ds},\bar{\rho})[1/p]$ 
corresponding to potentially
semi-stable representations that are not potentially crystalline.
We make use of \cite[Théorème 3.2.2]{BD} instead: it allows us to
choose $r$ such that each $r|_{G_{F_v}}$ is not potentially
crystalline when $\dim\gamma_v=1$. Note that in this situation,
each $R^{\square,\psi_v}(w_0,\t_{\gamma_v}^{ds},\bar{\rho})$ 
is irreducible.
\end{proof}

Let $\t$ be a discrete series 
inertial type and $w$ be a Hodge-Tate type, and let $\psi$ be the
character defined in Paragraph \ref{notationproof}.

\begin{prop}
\label{existsmodform}
Suppose that $R^{\square,\psi}(w,\t^{ds},\bar{\rho}) \neq 0$.
There exists a character $\eta$ of
$(\A_F^f)^{\times}/F^{\times}$ which restricts to $\psi$ on
$F_{v}^{\times}$ for all $v \in \Sigma_p$
such that for $\sigma = 
\otimes_{v\in \Sigma_p} (\sigma_D(\t)\otimes
\sigma_{w})$, the space of modular forms $S_{\sigma,\eta}(U,\O)$
contains an eigenform $f$
whose associated Galois representation
has its reduction modulo $p$ isomorphic to $\bar{r}$.
\end{prop}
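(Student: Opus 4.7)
The plan is to imitate the proof of Lemma \ref{existsmodformirr}, now prescribing the same discrete series data $(\t,\psi,w)$ at every place of $\Sigma_p$, and allowing $\t$ to be scalar as well as $w$ to be non-trivial when $K=\Q_p$.

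First I would construct the character $\eta$ of $(\A_F^f)^{\times}/F^{\times}$ by the recipe of Paragraph 5.4.1 of \cite{GK}, arranging that $\eta|_{F_v^{\times}}=\psi$ for every $v\in\Sigma_p$ and that $\eta$ satisfies the compatibility condition of Paragraph \ref{modular} with respect to $\sigma=\otimes_{v\in\Sigma_p}(\sigma_D(\t)\otimes\sigma_w)$. I would then form
\[
R_p=\hat{\otimes}_{v\in\Sigma_p} R^{\square,\psi}(w,\t^{ds},\bar{\rho}),
\]
which is nonzero by hypothesis, and invoke Corollary 3.1.7 of \cite{G} in the global setup of Paragraph \ref{globdata}: this produces, on any chosen irreducible component of $\spec R_p[1/p]$, a continuous modular lift $r:G_F\to\GL_2(\z)$ of $\bar{r}$ of determinant $\eta\eps$, unramified outside $p$, such that $r|_{G_{F_v}}$ defines a point of the prescribed component for each $v\in\Sigma_p$ — hence is potentially semi-stable of Hodge-Tate type $w$ and lies on a component of type $\t^{ds}$.

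Given such $r$, the Jacquet-Langlands correspondence together with the local-global compatibility recalled in Paragraph \ref{galoismodform} identifies $r$ with the Galois representation attached to an automorphic eigenform on $B^{\times}$. Via the $\LL_D$-dictionary of Paragraph \ref{lljl}, the local conditions on $r|_{G_{F_v}}$ at each $v\in\Sigma_p$ translate into the statement that the local component $\pi_v$ of the associated automorphic representation contains $\sigma_D(\t)\otimes\sigma_w$ in its restriction to $U_v$ (cf.\ Paragraph \ref{structurep}); combined with the tame level at $v_1$ and the unramifiedness elsewhere prescribed by the definition of $U$, this ensures that the eigenform can be chosen to lie in $S_{\sigma,\eta}(U,\O)$, and its Galois representation is $r$ which lifts $\bar{r}$.

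The main obstacle is, as in Lemma \ref{existsmodformirr}, the case in which $\t$ is scalar and the irreducible component of $\spec R^{\square,\psi}(w,\t^{ds},\bar{\rho})[1/p]$ we must reach consists of representations that are not potentially crystalline up to twist — a situation forced by Proposition \ref{ordinary} whenever $\bar{\rho}$ is a twist of $\smat{\omega}{*}{0}{1}$. Corollary 3.1.7 of \cite{G} only delivers potentially crystalline lifts, so in that case I would substitute \cite[Théorème 3.2.2]{BD}, which produces modular lifts whose local restrictions at the requisite places of $\Sigma_p$ are potentially semi-stable but not potentially crystalline of the prescribed flavour; the irreducibility of $R^{\square,\psi}(w,\t^{ds},\bar{\rho})$ in this ordinary setting (noted in the proof of Lemma \ref{existsmodformirr}) then guarantees that this single lift suffices.
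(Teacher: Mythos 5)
There is a genuine gap at the heart of your plan: the lifting results you invoke do not cover the generality in which you use them. Corollary 3.1.7 of \cite{G}, as used in the proof of Lemma \ref{existsmodformirr}, produces modular lifts of $\bar{r}$ that are potentially \emph{crystalline with Hodge--Tate weights $(0,1)$} at the places above $p$, lying on a prescribed component of a potentially Barsotti--Tate deformation ring; and \cite[Théorème 3.2.2]{BD} is likewise a weight-$w_0$ statement, supplying semi-stable non-crystalline (ordinary) lifts in the special case $\dim\gamma_v=1$. Neither result gives modular lifts that are potentially semi-stable of an \emph{arbitrary} Hodge--Tate type $w$ and arbitrary discrete series type $\t$, let alone ones hitting a prescribed irreducible component of $\spec R_p[1/p]$ for $R_p=\hat{\otimes}_{v\in\Sigma_p}R^{\square,\psi}(w,\t^{ds},\bar{\rho})$. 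The assertion that every component of such a ring contains a modular point is essentially of the same depth as the Breuil--Mézard/Fontaine--Mazur input the whole argument rests on, and it is precisely what the paper's proof is structured to avoid assuming. (Your closing appeal to ``irreducibility of $R^{\square,\psi}(w,\t^{ds},\bar{\rho})$ in the ordinary setting'' has the same problem: the paper only knows this for the weight-$w_0$ rings $R^{\square,\psi_v}(w_0,\t_{\gamma_v}^{ds},\bar{\rho})$ with $\dim\gamma_v=1$, not for general $w$.)

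The paper's actual proof takes a different route that only ever needs lifts in the weight-$w_0$, tame-type situation of Lemma \ref{existsmodformirr}. From $R^{\square,\psi}(w,\t^{ds},\bar{\rho})\neq 0$ and the already-proved multiplicity formulas (Theorems \ref{BMquatQp} and \ref{BMquatK}) one extracts an irreducible constituent $\gamma=\otimes_v\gamma_v$ of $\otimes_{v\in\Sigma_p}(\bar{\sigma_D}(\t)\otimes\bar{\sigma_w})$ and characters $\psi_v$ with $\hat{\otimes}_{v\in\Sigma_p}R^{\square,\psi_v}(w_0,\t_{\gamma_v}^{ds},\bar{\rho})\neq 0$, where $\t_{\gamma_v}$ is the tame type attached to $\gamma_v$ in Proposition \ref{liftistype}. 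Lemma \ref{existsmodformirr} (where \cite{G} and \cite{BD} genuinely apply) then yields an eigenform for these quaternionic Serre-weight data congruent to $\bar{r}$, and finally \cite[Lemma 2.1]{GS} transfers the resulting mod-$p$ Hecke eigensystem from the constituent $\gamma$ to the coefficient system $\sigma=\otimes_{v\in\Sigma_p}(\sigma_D(\t)\otimes\sigma_w)$ and lifts it to characteristic zero, producing the desired $f\in S_{\sigma,\eta}(U,\O)$. If you want to salvage your more direct approach, you would have to supply a lifting theorem of prescribed type \emph{and} prescribed (non-minimal) Hodge--Tate type at $p$, which is not what your citations provide; the Breuil--Mézard-plus-weight-switching detour is exactly the substitute for that missing input.
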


\begin{proof}
As before, the existence of 
$\eta$ satisfying the compatibility conditions with
$\sigma$ of Paragraph \ref{modular} and whose
restriction to 
$F_{v}^{\times}$ 
coincides $\psi$ for all $v\in \Sigma_p$,
comes from Paragraph 5.4.1. of \cite{GK}.

If $R^{\square,\psi}(w,\t^{ds},\bar{\rho}) \neq 0$,
by Theorems \ref{BMquatQp} and \ref{BMquatK} there exist an
irreducible representation $\gamma$ of $\prod_{v\in
\Sigma_p} \ell^{\times}$, $\gamma = \otimes_{v\in
\Sigma_p}\gamma_v$ appearing as an irreducible constituent of
$\otimes_{v\in \Sigma_p}(\bar{\sigma_D}(\t)\otimes\bar{\sigma_{w}})$ and
characters $\psi_{v}$ that are equal to a finite order character times
the cyclotomic character such that $\hat{\otimes}_{v\in
\Sigma_p}R^{\square,\psi_v}(w_0,\t_{v}^{ds},\bar{\rho}) \neq 0$,
where $\t_{v}$ is the inertial type attached to the representation
$\gamma_v$ as in the proof of Proposition \ref{liftistype}.

We can apply Lemma \ref{existsmodformirr} to the family of types $(\t_v)$,
as by construction $\sigma_D(\t_v) = \gamma_v$ is an irreducible
representation of $\ell^\times$. Then the result follows from 
Lemma 2.1 of \cite{GS}.
\end{proof}

In particular in the conditions of Proposition \ref{existsmodform}, 
${r_f}|_{G_{F_{v_0}}}$ has determinant
$\eps\psi$, inertial type $\t$ and Hodge-Tate type $w$ where $r_f$ is the
Galois representation attached to $f$ as in Paragraph
\ref{galoismodform}.

\subsection{Patching}
\label{patching}

Let $(\t_v,w_v)_{v\in \Sigma_p}$ be a family of discrete series inertial
types and Hodge-Tate types, with $\t_{v_0} = \t$ and $w_{v_0} = w$.
Let also $(\psi_v)_{v\in\Sigma_p}$ be a family of characters of $G_K =
G_{F_v}$.

We suppose in this paragraph that
there exists a character $\eta$ of
$(\A_F^f)^{\times}/F^{\times}$
that restricts to $\psi_v$ on $F_v^\times$ for all $v\in\Sigma_p$
and such that for $\sigma = 
\otimes_{v\in \Sigma_p} (\sigma_D(\t_v)\otimes
\sigma_{w_v})$, the space of modular forms $S_{\sigma,\eta}(U,\O)$
contains an eigenform $f$
whose associated Galois representation
has its reduction modulo $p$ isomorphic to $\bar{r}$.

Consider now $\tilde{\sigma} = 
(\sigmaG(\t))\otimes\sigma_w)\otimes
(\otimes_{v\in \Sigma_p'}(\sigma_D(\t_v)\otimes \sigma_{w_v})$. 
It is a representation of $\G\times\prod_{v\in \Sigma'_p}U_v$ where $U_v
= \O_D^{\times}$.
The space of modular forms $S_{\tilde{\sigma},\eta}(U,\O)$ is either the
same as $S_{\sigma,\eta}(U,\O)$ as an $\O$-module and a $T_S$-algebra 
(if $\t$ is of the form
$\scal$ or $\irr$, as in this case $\sigmaG(\t)$ acts on the same
space as $\sigma_D(\t)$) 
or is two copies of $S_{\sigma,\eta}(U,\O)$ (if $\t$
is of the form $\red$, as in this case $\sigmaG(\t)$ acts on a
space which is two copies $\sigma_D(\t)$, see
Paragraph \ref{red}). In any case, it contains a copy of the form 
$f$ given by Proposition \ref{existsmodform}.
Let $\m$ be a maximal ideal of $T_S$ 
containing $p$ such that $f$ is in $S_{\tilde{\sigma},\eta}(U,\O)_{\m}$.

Let $\T_{\tilde\sigma,\eta}(U,\O)_{\m}$ be the image of 
$T_{S,\m}$ in the endomorphism ring of $S_{\tilde{\sigma},\eta}(U,\O)_{\m}$.
Any eigenform in $S_{\tilde{\sigma},\eta}(U,\O)_{\m}$ has an associated
Galois representation with residual representation isomorphic to
$\bar{r}$, which is absolutely irreducible.
By the main result of \cite{Taya} and the Jacquet-Langlands
correspondence (see \cite[Lemma 1.3]{Tayb}) we deduce that
we have a Galois representation
$r_{\m} : G_F \to \GL_2(\T_{\tilde\sigma,\eta}(U,\O)_{\m})$ 
coming from all the eigenforms in 
$S_{\tilde{\sigma},\eta}(U,\O)_{\m}$. In particular, $r_{\m}$ is
unramified outside $p$.

Let $\R$ be the universal
deformation ring for deformations of $\bar{r}$ that are
unramified outside $\Sigma_p$ and $\R^{\square}$ the framed analogue.
Then we have a map $\R \to \T_{\tilde\sigma,\eta}(U,\O)_{\m}$ coming from $r_{\m}$, so
$\R$ acts on $S_{\tilde{\sigma},\eta}(U,\O)_{\m}$ via the Hecke
operators.

Let $R^{\square}(\bar{\rho})$ the ring classifying lifts of
$\bar{\rho}$, and $\R_p^{\square} = \hat{\otimes}_{v\in
\Sigma_p}R^{\square}(\bar{\rho})$.
Let $R_p$ be 
$\hat{\otimes}_{v\in \Sigma_p}R^{\square,\psi_v}(w_v,\t_v^{ds},\bar{\rho})$
seen as a quotient of $\R^{\square}_p$, and 
$\R' = \R^{\square}\otimes_{\R^{\square}_p}R_p$.
The ring $\R'$ is the universal ring for lifts of $\bar{r}$ that are
unramified outside $p$ and potentially semi-stable of inertial type
$\t_v$, of some discrete series extended type, of Hodge-Tate type $w_v$ and
determinant $\eps\psi$ at each place $v\in \Sigma_p$.
Let $M_0 = \R^{\square}\otimes_{\R}S_{\tilde{\sigma},\eta}(U,\O)_{\m}$. Then the
action of $\R^{\square}$ on $M_0$ factors through $\R'$ by the results
recalled in Paragraph \ref{structurep}.

We decompose the reduction $\bar{\sigma}$ of $\tilde{\sigma}$ modulo $p$ 
as a direct sum of representations of
$\Gamma\times\prod_{v\in \Sigma'_p}\ell^{\times}$: 
$\bar{\sigma} = \oplus_{\gamma}\gamma^{\oplus n_{\gamma}}$,
with $\gamma = \gamma_{v_0}\otimes(\otimes_{v\in \Sigma_p'}\gamma_v)$.
This gives a decomposition 
$\bar{\sigma} = \oplus_{\gamma}\gamma^{n_{\gamma}}$
as a representation
of $\G\times \prod_{v\in \Sigma_p'}U_v$.

Using the techniques of \cite{Kis09a}, Section (2.2) 
we construct the following objects:
\begin{enumerate}
\item 
a ring $R_{\infty}$ which is a power series ring on $R_p$ (this is
$\bar{R}_{\infty}$ of \cite{Kis09a}).

\item 
a ring $S_{\infty}$ which is a power series ring on $\O$ (this is
$\O[[\Delta_{\infty}]]$ of \cite{Kis09a}).

\item
an $(S_{\infty},R_{\infty})$-module $M_{\infty}$ that is free as an
$S_{\infty}$-module, and such that $M_0$ is a quotient of $M_{\infty}$.

\item
a $(S_{\infty},R_{\infty})$-linear operator $W_{v_0}$ on $M_{\infty}$
compatible with the Hecke operator $W_{v_0}$ on $M_0$ defined in Paragraph
\ref{heckep}, with $W_{v_0}^2 = \psi_{v_0}(\varpi_{K})^{-1}$.

\item
a decomposition $M_{\infty}\otimes\F =
\oplus_{\gamma}\bar{M}_{\infty,\gamma}^{\oplus n_{\gamma}}$ as
$(S_{\infty},R_{\infty})$-module, such that each
$\bar{M}_{\infty,\gamma}$ is a finite free $S_{\infty}\otimes\F$-module,
and such that moreover $\bar{M}_{\infty,\gamma}$ does not depend on
$(\t_{v_0},w_{v_0})$ or the $(\t_v,w_v)$, in a sense that is explained
below.
\end{enumerate}

The only part which is not a copy of the arguments of \cite{Kis09a} is (4). 
The module $M_{\infty}$ is built by patching
modules $M_n = \R^{\square}\otimes_{\R}S_{\tilde{\sigma},\eta}(U_n,\O)_{\m_n}$
for some choice of compact open subgroup $U_n$ which is maximal at $v$
for all $v\mid p$, and for some choice of maximal ideal $\m_n$ (with
$U_0=U$ and $\m_0=\m$). In
particular, for each $n$ there is a Hecke operator $W_{v_0}$ on $M_n$
as in Paragraph \ref{heckep} with $W_{v_0}^2 = \eta(\varpi_{K,v_0})^{-1}
= \psi_{v_0}(\varpi_K)^{-1}$
which is compatible with the surjective map $M_n \to M_0$. 
Moreover, the action of $W_{v_0}$ commutes with the right action of the
subgroup of $(B\otimes_F\A_F^f)^{\times}$ of elements that are trivial at $v_0$, hence
$W_{v_0}$ is $R_{\infty}$- and $S_{\infty}$-linear.
Once a square root of $\eta(\varpi_{K,v_0})^{-1}$ is fixed, then for each $n$
we have a decomposition $M_n = M_n^+\oplus M_n^-$ in
sub-$(S_{\infty},R_{\infty})$-modules according to the
eigenvalues of $W_{v_0}$, and this decomposition is compatible to the
decomposition $M_0^+ \oplus M_0^-$. We apply the patching argument not
only to $M_n$, but to the decomposition $M_n^+\oplus M_n^-$, which gives
a decomposition $M_{\infty} = M_{\infty}^+ \oplus M_{\infty}^-$ and an 
operator $W_{v_0}$ on $M_{\infty}$ with the required properties. 
Note that $M_{\infty}^+$ and $M_{\infty}^-$ are also free as
$S_{\infty}$-modules.

\smallskip

Let $\gamma$ be an irreducible smooth representation of $\G
\times\prod_{v\in \Sigma'_p}U_v$ in characteristic $p$, and 
$\tilde{\gamma}$ a smooth lift of $\gamma$ (as in Proposition
\ref{liftistype} for the part which is a representation of $\G$ and
by Teichmüller lift for the parts which are representations of $U_v =
\O_D^\times$).  By Lemma
\ref{existsmodformirr}, there exists a character $\eta_{\gamma}$ of 
$(\A_F^f)^{\times}/F^{\times}$ lifting $\bar\eta$ and characters
$(\psi_{\gamma,v})_{v\in\Sigma_p}$ satisfying the conditions at the
beginning of this Paragraph, so we can make the constructions with the
space of modular forms $S_{{\tilde{\gamma}},\eta_{\gamma}}(U,\O)_{\m}$.
We denote by $M_{\infty,\gamma}$ the patched module we obtain
(although it depends not only on $\gamma$ but also on other data).
Then (5)
means that $\bar{M}_{\infty,\gamma}$ is isomorphic to the reduction
modulo $p$ of $M_{\infty,\gamma}$.
We also have a Hecke operator $W_{v_0}$ on $M_{\infty,\gamma}$ 
and a decomposition 
$M_{\infty,\gamma} = M_{\infty,\gamma}^+ \oplus M_{\infty,\gamma}^-$ 
that reduces to $\bar{M}_{\infty,\gamma} =
\bar{M}_{\infty,\gamma}^+ \oplus \bar{M}_{\infty,\gamma}^-$.

\subsection{Equality of multiplicities}

Recall that $d_{\t} = 2$ if $\t$ is of the form $\red$ and $d_{\t} = 1$
otherwise.
As in Lemma (2.2.11) of \cite{Kis09a}, $M_{\infty}$ has rank $0$ or
$d_{\t}$ at each generic point of $R_{\infty}$ and $e(M_{\infty}/\pi
M_{\infty},R_{\infty}/\pi R_{\infty}) \leq d_{\t}e(R_{\infty}/\pi R_{\infty})$
with equality if and only if the
support of $M_{\infty}$ is all of $\spec R_{\infty}$ (we already know that
it is a union of irreducible components of $\spec R_{\infty}$).

Our main ingredient is the following Proposition, which we
prove using the results of Paragraph \ref{reformulation}
and the methods of Section (2.2) of \cite{Kis09a}:

\begin{prop}
\label{equality}
$e(M_{\infty}/\pi M_{\infty},R_{\infty}/\pi R_{\infty})
= d_{\t}e(R_{\infty}/\pi R_{\infty})$
\end{prop}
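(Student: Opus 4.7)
The plan is to adapt Kisin's strategy from \cite[Section~2.2]{Kis09a}, with the reformulated Breuil-Mézard formulas (Theorems \ref{BMquatbisQp} and \ref{BMquatbisK}) replacing the classical Breuil-Mézard conjecture. The upper bound $e(M_{\infty}/\pi, R_{\infty}/\pi) \leq d_{\t}\, e(R_{\infty}/\pi)$ was noted just before the statement, so the task is to prove the matching lower bound; equivalently, that the support of $M_{\infty}$ exhausts $\spec R_{\infty}$ with the expected generic rank $d_{\t}$ on each component.

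Using the decomposition $M_{\infty}\otimes\F = \bigoplus_{\gamma} \bar{M}_{\infty,\gamma}^{\oplus n_{\gamma}}$ over irreducible representations $\gamma = \gamma_{v_0}\otimes(\otimes_{v\in \Sigma_p'}\gamma_v)$ of $\Gamma \times \prod_{v\in\Sigma_p'}\ell^{\times}$, together with the additivity of Hilbert-Samuel multiplicities, I would first write
\[
e(M_{\infty}/\pi, R_{\infty}/\pi) = \sum_{\gamma} n_{\gamma}\, e(\bar{M}_{\infty,\gamma}, R_{\infty}/\pi),
\]
where $n_\gamma$ is the multiplicity of $\gamma$ in $\bar{\sigma}$. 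Property (5) of the patching construction asserts that each $\bar{M}_{\infty,\gamma}$ is the reduction modulo $p$ of the patched module $M_{\infty,\gamma}$ obtained by running the construction for a tame lift $\tilde{\gamma}$ of $\gamma$, furnished by Lemma \ref{existsmodformirr} with the tame inertial types $\t_{\gamma_v}$. For that alternative patching, the representations $\sigma_D(\t_{\gamma_v})$ and $\sigmaG(\t_{\gamma_{v_0}})$ are irreducible by construction, so Kisin's original support argument applies directly and shows that $M_{\infty,\gamma}$ has support all of $\spec R_{\infty,\gamma}$ with generic rank $d_{\t_{\gamma_{v_0}}}$. Theorems \ref{BMquatbisQp}/\ref{BMquatbisK} applied place by place then give the intrinsic value
\[
e(\bar{M}_{\infty,\gamma}, R_{\infty,\gamma}/\pi) = d_{\t_{\gamma_{v_0}}}\, e(R_{\infty,\gamma}/\pi) = i_{\bar{\rho}}([\gamma_{v_0}])\prod_{v\in\Sigma_p'}i_{D,\bar{\rho}}([\gamma_v]).
\]

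Since the support of $\bar{M}_{\infty,\gamma}$ in $\spec R_p$ depends only on $\gamma$ (again by property (5)), the same value computes $e(\bar{M}_{\infty,\gamma}, R_{\infty}/\pi)$. Summing over $\gamma$, factoring the resulting multi-sum, and reapplying Theorems \ref{BMquatbisQp}/\ref{BMquatbisK} yields
\[
e(M_{\infty}/\pi, R_{\infty}/\pi) = i_{\bar{\rho}}\bigl([\bar{\sigmaG}(\t)\otimes\bar{\sigma_w}]\bigr)\prod_{v\in\Sigma_p'}i_{D,\bar{\rho}}\bigl([\bar{\sigma_D}(\t_v)\otimes\bar{\sigma_{w_v}}]\bigr) = d_{\t}\prod_{v\in\Sigma_p}e\bigl(R^{\square,\psi_v}(w_v,\t_v^{ds},\bar{\rho})/\pi\bigr),
\]
which equals $d_{\t}\, e(R_{\infty}/\pi R_{\infty})$ because $R_{\infty}$ is a power-series ring over the completed tensor product $R_p$. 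The main obstacle will be justifying the equality $e(\bar{M}_{\infty,\gamma}, R_{\infty,\gamma}/\pi) = e(\bar{M}_{\infty,\gamma}, R_{\infty}/\pi)$ in spite of the two ambient rings being different; this rests on property (5) and on the observation that $\bar{M}_{\infty,\gamma}$ is supported only on those irreducible components of $\spec R_p$ whose mod-$p$ fibre carries the Serre-weight data encoded by $\gamma$, so that the ambient ring may be replaced by the universal deformation ring of any Hodge-Tate type and inertial type realising $\gamma$.
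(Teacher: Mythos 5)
Your proposal follows the same strategy as the paper's own proof: decompose $M_\infty\otimes\F = \bigoplus_\gamma \bar M_{\infty,\gamma}^{\oplus n_\gamma}$, compute each $e(\bar M_{\infty,\gamma}, R_{\infty,\gamma}/\pi)$ by invoking the base case for the tame-type patching, transfer back to $R_\infty$, and sum. The paper makes two of your steps precise where you wave at them: the base case is the $w_v = w_0$ case, handled by combining the modularity input (as in Proposition \ref{existsmodform}) with Lemma 4.3.8 of \cite{GK} (which uses that for $w_v=w_0$ the irreducible components of $\spec R_{\infty}[1/p]$ are connected components), rather than the irreducibility of $\sigma_D(\t_{\gamma_v})$ per se; and the transfer $e(\bar M_{\infty,\gamma}, R_{\infty}/\pi) = e(\bar M_{\infty,\gamma}, R_{\infty,\gamma}/\pi)$ is justified not by a support-on-$\spec R_p$ argument but by observing that $R_\infty$ and $R_{\infty,\gamma}$ have the same image in $\End(\bar M_{\infty,\gamma})$, both factoring through the global framed deformation ring with patching variables.
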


\begin{proof}
Suppose first that $w_v = w_0$ for all $v \in \Sigma_p$.
By reasoning as in the proof of Proposition \ref{existsmodform}, we see 
that the support of the module $M_0$ meets each irreducible component
of $\spec R_{\infty}[1/p]$.
The irreducible components of $\spec R_{\infty}[1/p]$ are
connected components, hence we can
apply the criterion of Lemma 4.3.8 of \cite{GK}: the equality of Proposition \ref{equality}
holds if and only if the support of $M_{0}$ meets every irreducible component of
$\spec R_{\infty}[1/p]$.

\smallskip

Return now to the case without conditions on the $w_v$ (in the case $K
= \Q_p$).
It follows from the decomposition $M_{\infty}\otimes\F =
\oplus_{\gamma}\bar{M}_{\infty,\gamma}^{\oplus n_{\gamma}}$ that
$e(M_{\infty}/\pi,R_{\infty}/\pi)
= \sum_{\gamma}n_{\gamma}e(\bar{M}_{\infty,\gamma},R_{\infty}/\pi)$.
Moreover $e(R_{\infty}/\pi R_{\infty}) = \prod_{v\in
\Sigma_p}e(R^{\square,\psi_v}(w_v,\t_{\gamma_v}^{ds},\bar{\rho})/\pi)$.
As $K = \Q_p$, it follows from the theorems of Paragraph \ref{reformulation} that
we have $d_{\t}e(R_{\infty}/\pi R_{\infty}) =
\sum_{\gamma}n_{\gamma}(\dim\gamma_{v_0})
\prod_{v\in\Sigma_p}e(R^{\square,\psi}(w_0,\t_{\gamma_v}^{ds},\bar{\rho})/\pi)$.

We denote by $R_{\infty,\gamma}$ the ring that is 
the analogue of $R_{\infty}$ but with
$(w_0,\t_{\gamma_v})$ instead of $(w_v,\t_v)$ for all $v \in \Sigma_p$,
and the characters $(\psi_{\gamma,v})_{v\in\Sigma_p}$ 
defined at the end of Paragraph \ref{patching}
instead of $(\psi_v)$.  Let also $R_{p,\gamma}$ be the analogue of $R_p$.

Then the image of $R_{\infty}$
and $R_{\infty,\gamma}$ in the
endomorphisms of $\bar{M}_{\infty,\gamma}$ is the same, as follows from (4) of
the properties of patching, hence $e(\bar{M}_{\infty,\gamma},R_{\infty}/\pi)
= e(\bar{M}_{\infty,\gamma},R_{\infty,\gamma}/\pi)$.
Moreover $e(R_{\infty,\gamma}/\pi) =
\prod_{v\in\Sigma_p}e(R^{\square,\psi_{\gamma,v}}(w_0,\t_{\gamma_v}^{ds},\bar{\rho})/\pi)$, 
and we have 
$e(\bar{M}_{\infty,\gamma},R_{\infty,\gamma}/\pi) = (\dim\gamma_{v_0})
e(R_{\infty,\gamma}/\pi)$ by applying the part of Proposition \ref{equality} 
that we have already proved to $\bar{M}_{\infty,\gamma}$
and $R_{\infty,\gamma}$, which concludes the proof of Proposition \ref{equality} in the
general case.
\end{proof}

\subsection{Action of the Hecke operator at $p$}
\label{actionhecke}

We apply the results of the preceding paragraphs in the situation coming
from Proposition \ref{existsmodform}, so in particular $w_v = w$ and
$\t_v = \t$ and $\psi_v = \psi$ for all $v\in\Sigma_p$.
Recall that we have chosen a square root $\alpha$ of
$\bar{\eta}(\varpi_{K,v_0})^{-1} = \bar{\psi}(\varpi_K)^{-1}$.

Let $(\t^+,\t^-) = (\t_{v_0}^+,\t_{v_0}^-)$ be
a pair of conjugate extended types compatible
to $(\t,\psi)$. 
We set 
$R^+_{\infty} = 
R^{\square,\psi}(w_{v_0},\t_{v_0}^+,\bar{\rho})
\otimes_{R^{\square,\psi}(w_{v_0},\t_{v_0}^{ds},\bar{\rho})}R_{\infty}$ and 
$R^-_{\infty} = 
R^{\square,\psi}(w_{v_0},\t_{v_0}^-,\bar{\rho})
\otimes_{R^{\square,\psi}(w_{v_0},\t_{v_0}^{ds},\bar{\rho})}R_{\infty}$ (so that
$R_{\infty} = R_{\infty}^+ = R_{\infty}^-$ when $\t$ is of the form
$\red$, and these rings differ only when $\t$ is of the form $\scal$ or
$\irr$).

We make a choice for $\sigmaG(\t)$ so that 
$\t^+ = \LL_D(\sigmaG(\t))\otimes\unr(a\sqrt{\varpi_K}^{|w|})^{-1}$ for
$a$ a lift of $\alpha$ with $a^2 = \psi(\varpi_{K})^{-1}$ (recall that
$|w| = n+2m$ if $K=\Q_p$ and $w = (n,m)$, and set $|w_0|=0$ for any $K$).

We can consider the Hecke operator $W_{v_0}$ acting on all the spaces of
modular forms that we have defined. This gives a decomposition
$M_{\infty} = M_{\infty}^+ \oplus M_{\infty}^-$ as in Paragraph
\ref{patching}, where $M_{\infty}^+$ is the submodule on which $W_{v_0}$
acts by a lift of $\alpha$, and decompositions $M_n = M_n^+\oplus M_n^-$
for the modules $M_n$.

The action of $R_{\infty}$ on
$M_{\infty}^+$ factors through $R_{\infty}^+$, and similarly for
$R_{\infty}^-$. Indeed this is true for each $M_n = M_n^+\oplus M_n^-$
by Lemma \ref{eigenvalueexttype}.

We can do the same thing for each irreducible representation $\gamma$
of $\Gamma$: recall that for each $\gamma$
we made a choice in Paragraph \ref{notationproof} of 
an inertial type $\t_{\gamma}$ and a representation
$\sigmaG(\t_{\gamma})$ of $\G$
such that $\bar{\sigmaG}(\t_{\gamma})$ is isomorphic to $\gamma$ and an
extended type
$\t_{\gamma}^+ =
\LL_D(\sigmaG(\t_{\gamma}))\otimes\unr(a_{\gamma})^{-1}$
for an $a_{\gamma}$ lifting $\alpha$ with $W_{v_0}^2 = a_{\gamma}^2$ on
$M_{\infty,\gamma}$.
Let
$\t_{\gamma}^- =
\LL_D(\xi\sigmaG(\t_{\gamma}))\otimes\unr(a_{\gamma})^{-1} =
\LL_D(\sigmaG(\t_{\xi\gamma}))\otimes\unr(a_{\gamma})^{-1} =
\LL_D(\sigmaG(\t_{\gamma}))\otimes\unr(-a_{\gamma})^{-1}$.

For $\gamma$ of dimension $2$ we set $R_{\infty,\gamma}^+ =
R_{\infty,\gamma}^- = R_{\infty,\gamma}$ and for $\gamma$ of dimension $1$
we set $R^+_{\infty,\gamma} = 
R^{\square,\psi}(w_{0},\t_{\gamma}^+,\bar{\rho})
\otimes_{R^{\square,\psi}(w_{0},\t_{\gamma}^{ds},\bar{\rho})}R_{\infty,\gamma}$ and 
$R^-_{\infty,\gamma} = 
R^{\square,\psi}(w_{0},\t_{\gamma}^-,\bar{\rho})
\otimes_{R^{\square,\psi}(w_{0},\t_{\gamma}^{ds},\bar{\rho})}R_{\infty,\gamma}$.
Then for all $\gamma$ 
the action of $R_{\infty,\gamma}$ on $\bar{M}_{\infty,\gamma}^{\pm}$ factors
through $R_{\infty,\gamma}^{\pm}$ as before.
Note that $\t_{\gamma}^- = \t_{\xi\gamma}^+$ and $R_{\infty,\gamma}^- = R_{\infty,\xi\gamma}^+$.

Note that the decompositions 
$M_{\infty} = M_{\infty}^+ \oplus M_{\infty}^-$ and
$\bar{M}_{\infty,\gamma} = \bar{M}_{\infty,\gamma}^+ \oplus \bar{M}_{\infty,\gamma}^-$
for all $\gamma$ are compatible, that is
$\bar{M}_{\infty}^{\pm}\otimes\F = \oplus_{\gamma}(\bar{M}_{\infty,\gamma}^{\pm})^{n_{\gamma}}$.

In particular, we have 
$e(M_{\infty}^{\pm}/\pi,R_{\infty}^{\pm}/\pi) = 
e(M_{\infty}^{\pm}/\pi ,R_{\infty}/\pi)$,
hence 
$e(M_{\infty}^{+}/\pi,R_{\infty}^{+}/\pi) + 
e(M_{\infty}^{-}/\pi,R_{\infty}^{-}/\pi) = 
e(M_{\infty}/\pi,R_{\infty}/\pi)$.
We also have that $e(M_{\infty}^{\pm}/\pi,R_{\infty}^{\pm}/\pi)
\leq e(R_{\infty}^{\pm}/\pi)$ by the same argument as in Lemma (2.2.11)
of \cite{Kis09a}.
Moreover
$d_{\t}e(R_{\infty}/\pi) = 
e(R^+_{\infty}/\pi) +
e(R^-_{\infty}/\pi)$ (see Proposition \ref{summult}).
Hence we deduce from Proposition \ref{equality} that
$e(M_{\infty}^{\pm}/\pi,R_{\infty}^{\pm}/\pi) = e(R_{\infty}^{\pm}/\pi)$
and
$e(\bar{M}_{\infty,\gamma}^{\pm},R_{\infty,\gamma}^{\pm}/\pi) 
= e(R_{\infty,\gamma}^{\pm}/\pi)$ for all $\gamma$.

Finally we get that
$e(R^{\square,\psi}(w,\t^{+},\bar{\rho})/\pi) = 
\sum_{\gamma}
[\bar{\sigmaG}(\t)\otimes\bar{\sigma_w}:\gamma]
e(R^{\square,\psi}(w_0,\t_{\gamma}^{+},\bar{\rho})/\pi)$, as
$e(R_{\infty}/\pi) \neq 0$.
As the right-hand side is 
$\mu_{\bar{\rho}}([\bar{\sigmaG}(\t)\otimes\bar{\sigma_w}])$
by the definition of $\mu_{\bar\rho}$ in Paragraph \ref{defmu}, we get
that
$e(R^{\square,\psi}(w,\t^+,\bar{\rho})/\pi) =
\mu_{\bar{\rho}}([\bar{\sigmaG}(\t)\otimes\bar{\sigma_w}])$
which finishes the proof of Theorems \ref{mainQp} and \ref{mainK}. 

\section{Application}
\label{appli}

\subsection{Computation of weights}

In this section we suppose $p \geq 5$.

Let $\bar{\rho}$ be a continuous representation $G_{\Q_p} \to \GL_2(\F)$
such that $\bar{\rho}|_{I_p} = \mat \omega{*}01\otimes\omega^n$.
We compute $\mu_{\bar{\rho}}$ in this case for the choice $\varpi_{\Q_p} = p$.

For a representation of the form
$\bar{\rho} = \mat{\omega}*01\otimes\omega^n\otimes\unr(x)$
then $\bar{\psi}(p) = x^2$. In order to apply Theorem \ref{mainQp} we
need to make a choice of a square root of $\bar{\psi}(p)^{-1}$, and 
we take this square root to be $\alpha = x^{-1}$.

\begin{lemm}
\label{multQp}
Let $\bar{\rho}|_{I_p} = \mat \omega{*}01\otimes \omega^n$ for some $n$
with $*$ très ramifié (and non-zero), then $\mu_{\bar{\rho}}(\xi^n\delta_n)= 1$ and all
other $\mu_{\bar{\rho}}(\gamma)$ are $0$. 

Let $\bar{\rho}|_{I_p} = \mat \omega{*}01\otimes \omega^n$ for some $n$
with $*$ peu ramifié but non-zero, then $\mu_{\bar{\rho}}(\xi^n\delta_n) = 1$ 
and $\mu_{\bar{\rho}}(r_{n(p+1)+p-1}) = 2$ and
all other $\mu_{\bar{\rho}}(\gamma)$ are $0$. 

Let $\bar{\rho}|_{I_p} = \mat \omega 001\otimes \omega^n$ for some $n$,
then $\mu_{\bar{\rho}}(\xi^n\delta_n) = 1$
and $\mu_{\bar{\rho}}(r_{n(p+1)+p-1}) = 4$ and
all other $\mu_{\bar{\rho}}(\gamma)$ are $0$.
\end{lemm}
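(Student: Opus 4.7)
The plan is to combine the general corollaries following Proposition \ref{samevaluemu} with the explicit computations of semistable and potentially Barsotti--Tate deformation rings carried out in Section~5.2 of \cite{BM}. Throughout, $w=w_0$ and the characters $\psi$ are chosen as in Paragraph~\ref{notationproof}.

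First I would handle the two-dimensional weight. By the corollary after Proposition~\ref{samevaluemu}, for a two-dimensional irreducible $r$ we have $\mu_{\bar\rho}([r]) = e(R^{\square,\psi}(w_0,\t_r^{ds},\bar\rho)/\pi)$. For $r = r_{n(p+1)+p-1}$, the inertial type $\t_r$ constructed in Proposition~\ref{liftistype} is of the form $\red$, namely $\ind_{W_{K'}}^{W_{\Q_p}}\tilde\chi$ for the tame character $\tilde\chi$ corresponding to $\chi_{n(p+1)+p-1}$. The ring $R^{\square,\psi}(w_0,\t_r^{ds},\bar\rho)$ is then a potentially Barsotti--Tate deformation ring whose multiplicity is computed in Section 5.2 of \cite{BM}: it vanishes in the très ramifié case, and equals $2$ (resp. $4$) in the peu ramifié non-zero (resp. split) case. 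This yields the stated values of $\mu_{\bar\rho}(r_{n(p+1)+p-1})$.

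Next I would handle the one-dimensional weights. By the same corollary, $\mu_{\bar\rho}([\delta_n]) + \mu_{\bar\rho}([\xi\delta_n]) = e(R^{\square,\psi}(w_0,\t_{\delta_n}^{ds},\bar\rho)/\pi)$. Here $\t_{\delta_n}$ is the scalar type $\tilde\omega^n\oplus\tilde\omega^n$ on $I_{\Q_p}$, so this deformation ring classifies lifts that are semi-stable but not crystalline, up to twist by $\tilde\omega^n$. In each of the three cases (très ramifié, peu ramifié non-zero, split), the explicit formulas of \cite[Section 5.2]{BM} give multiplicity $1$, so $\mu_{\bar\rho}([\delta_n])+\mu_{\bar\rho}([\xi\delta_n]) = 1$. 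By Proposition~\ref{dim1} at most one of these terms is non-zero, and it remains to identify which.

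To pin down the $\xi^n$ factor, I would construct an explicit semi-stable lift. Write $\bar\rho = \smat{\omega}{*}{0}{1}\otimes\omega^n\otimes\unr(x)$ and take $\rho = \smat{\omega\eps}{\tilde*}{0}{1}\otimes \tilde\omega^n\otimes\unr(y)$, where $y$ is the Teichmüller lift of $x$, so that $y^2 = \psi_\gamma(p)^{-1}\cdot \alpha^{-2}$ and hence $y$ lifts $\alpha^{-1}$. The associated Weil--Deligne representation is $(\tilde\omega^n\unr(y)\oplus\tilde\omega^n\unr(y)\|\cdot\|,N)$ with $N\neq 0$, whose extended type I denote $\t^+$. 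Applying Theorem~\ref{mainQp}, the matching condition $\t^+ = \LL_D(\sigmaG(\t_{\delta_n}))\otimes\unr(a p^n)^{-1}$ with $a^2 = \psi(p)^{-1}$ forces the choice $\sigmaG(\t_{\delta_n}) = \xi^n\cdot(\text{canonical lift of }\delta_n)$: indeed, the factor $p^n$ introduces a sign $(-1)^n$ through the square root $\sqrt{\varpi_{\Q_p}}=\sqrt p$ appearing in the Jacquet--Langlands normalization of Paragraph~\ref{galoismodform}, which is absorbed into $\xi^n$. Hence it is $\xi^n\delta_n$, not $\xi^{n+1}\delta_n$, which carries the multiplicity $1$. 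Since the lift $\rho$ exists for every $\bar\rho$ of the stated form (whether $*$ is très ramifié, peu ramifié or zero), this argument applies uniformly in the three cases.

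The steps that merely quote \cite{BM} are routine; the main obstacle is the final identification of the parity, which requires careful tracking of the normalizations in the local Langlands correspondence $\LL_D$, the choice of uniformizer $\varpi_{\Q_p} = p$ and its square root, and the compatibility between the Hecke eigenvalue $\alpha = x^{-1}$ and the unramified twist appearing in $\t^+$. All the other contributions $\mu_{\bar\rho}(\gamma)$ vanish by the computation of $\WG(\bar\rho)$: the set of $\gamma$ with $i_{\bar\rho}([\gamma])\neq 0$ is exactly $\{\xi^n\delta_n, r_{n(p+1)+p-1}\}$ (with $r$ absent in the très ramifié case), as follows from the classical determination of Serre weights for reducible $\bar\rho$ combined with Theorem~\ref{BMquatbisQp}.
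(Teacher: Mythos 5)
Your skeleton differs from the paper's (the paper never touches the cuspidal-type rings directly: it computes $e(R^{\square,\psi}(w,\triv^{ds},\bar{\rho})/\pi)$ for varying Hodge--Tate types $w$ via the classical Breuil--M\'ezard formula and the weight multiplicities of \cite[2.1.2]{BM}, then extracts the $\mu_{\bar\rho}(\gamma)$ using Theorem \ref{mainQp} and Lemma \ref{decsym}, with \cite[5.2.1]{BM} pinning the twist), and a direct computation of the defining rings as you propose is a legitimate alternative; but as written it has a real gap exactly at the delicate point, the identification of the parity $\xi^n$. In the definition of $\mu_{\bar\rho}$ (Paragraph \ref{notationproof}) and in the corollary you invoke, the Hodge--Tate type is $w_0$, so the matching condition is $\t_{\gamma}^+ = \LL_D(\sigmaG(\t_{\gamma}))\otimes\unr(a_{\gamma})^{-1}$ with \emph{no} factor $\varpi_{\Q_p}^{|w|}$, since $|w_0|=0$; your factor $\unr(ap^n)^{-1}$ conflates the exponent $n$ of the twist $\omega^n$ with the integer $n$ of a Hodge--Tate type $w=(n,m)$, and $\sqrt{\varpi_{\Q_p}}$ plays no role in this matching (it only enters the realization $\tilde u$ and the global arguments). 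So the mechanism you give for the sign is spurious, even though the conclusion is correct. The parity in fact has to come from the translation between characters of $\G$ and characters of $\Q_p^{\times}$: a lift of $\xi^i\delta_n$ is of the form $(\tilde\omega^n\unr(c))\circ\Norm$, and since $\Norm(\varpi_D)=-p$ its value at $\iota$ is $(-1)^nc$, so $(-1)^i=(-1)^nc$; combining this with $\LL_D(\psi_D)=(\psi\oplus\psi\|\cdot\\|,N)$, with the normalization of $\WD$ of a semi-stable non-crystalline lift, and with the identification of the unramified part of such lifts of $\bar\rho$ (this is what the paper extracts from \cite[5.2.1]{BM}, and what your explicit lift $\rho$ could supply) gives $i\equiv n$. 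None of this bookkeeping is actually carried out in your argument, and it is the essential content of the lemma.

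Two secondary problems. First, Section 5.2 of \cite{BM} computes semi-stable and crystalline deformation rings for the \emph{trivial} type; it does not compute the potentially Barsotti--Tate ring with the niveau-$2$ type $\t_r$, so your values $0$, $2$, $4$ for $e(R^{\square,\psi}(w_0,\t_r^{ds},\bar{\rho})/\pi)$ are asserted rather than derived. They can be obtained either from the classical Breuil--M\'ezard formula (pairing the reduction of the cuspidal $\GL_2(\Z_p)$-type against the intrinsic multiplicities of \cite[2.1.2]{BM}), or indirectly as the paper does via trivial-type rings with $w=(a,b)$, $a>0$, and Lemma \ref{decsym}; either way a computation is needed, and the paper's route also yields the vanishing of all the remaining $\mu_{\bar\rho}(\gamma)$ simultaneously. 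Second, your final claim that the set of $\gamma$ with $i_{\bar\rho}([\gamma])\neq 0$ equals $\{\xi^n\delta_n, r_{n(p+1)+p-1}\}$ cannot be correct: since $i_{\bar\rho}([\gamma])=\mu_{\bar\rho}([\gamma])+\mu_{\bar\rho}([\xi\gamma])$ is $\xi$-invariant, $\delta_n\in\WG(\bar{\rho})$ if and only if $\xi\delta_n\in\WG(\bar{\rho})$; in particular $\WG(\bar{\rho})$ and the quaternionic Serre weights are blind to the parity you need, which again shows that the $\xi^n$ must be settled by the extended-type computation, not by a determination of weights.
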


\begin{proof}
We can compute 
$e(R^{\square,\psi}(w,\triv^{ds},\bar{\rho})/\pi)$ for any Hodge-Tate
type $w$ using the formula coming from the Breuil-Mézard conjecture for
$\GL_2$ and the list of the Serre weights with their multiplicities
given in \cite[Paragraph 2.1.2]{BM}.
Then we compare this to the formula for this multiplicity given by
Theorem \ref{mainQp}, using also the formula given by 
Lemma \ref{decsym}. 

We compute first
$e(R^{\square,\psi}(w,\triv^{ds},\bar{\rho})/\pi)$ for Hodge-Tate types
of the form $w = (0,m)$. We get that
$\mu_{\bar\rho}(\xi^{\alpha}\delta_m) = 0$ for $\alpha = 0,1$ if $m$ is
not equal to $n$ modulo $p-1$, and 
$\mu_{\bar\rho}(\delta_n) +\mu_{\bar\rho}(\xi\delta_n) = 1$. Using the
computations in \cite{BM}, Paragraph 5.2.1 we see that in fact
$\mu_{\bar{\rho}}(\xi^n\delta_n)= 1$.

By computing $e(R^{\square,\psi}(w,\triv^{ds},\bar{\rho})/\pi)$
for Hodge-Tate types $w$ of the form $(a,b)$ for $a > 0$ we can find the
value of $\mu_{\bar\rho}(r)$ for representations $r$ of dimension $2$.
\end{proof}

\subsection{An application to congruences modulo $p$ in
$S_k(\Gamma_0(p))$}

Let $f$ be a newform in $S_k(\Gamma_0(p))$. Then $a_p(f) = \pm
p^{k/2-1}$. We denote by $\rho_f$ the $p$-adic Galois representation
associated to $f$, $r_f$ its reduction modulo $p$,
and $r_{f,p}$ its restriction to a decomposition group
$G_{\Q_p}$ at $p$. 

\begin{theo}
\label{congruences}
Let $k > 2$ be an even integer, $f$ a newform in $S_k(\Gamma_0(p))$.
\begin{enumerate}
\item
Suppose that 
$r_{f,p}$ is of the form $\mat \omega{*}01\otimes
\omega^{k/2-1}\otimes\unr(x)$ for some $x$ and $*$ très ramifié
(and non-zero)
and $k \leq 2p+2$.
Then $x^{-1} = (-1)^{k/2-1}\bar{(a_p(f)/p^{k/2-1})}$. 
In particular, there does not exist a newform
$g$ in $S_k(\Gamma_0(p))$ congruent to $f$ modulo $p$
such that $a_p(g) = -a_p(f)$.
 
\item
Suppose that 
either $r_{f,p}|_{I_p}$ is not of the form $\mat \omega{*}01\otimes
\omega^{k/2-1}$ with $*$ très ramifié or that $k > 2p+2$.
If $r_f|_{G_{\Q(\zeta_p)}}$ is absolutely irreducible then there exists a newform
$g\in S_k(\Gamma_0(p))$ congruent to $f$ modulo $p$ such that $a_p(g) = -a_p(f)$.
\end{enumerate}
\end{theo}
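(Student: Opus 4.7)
The plan is to reformulate the existence of a newform $g\in S_k(\Gamma_0(p))$ congruent to $f$ with $a_p(g)=-a_p(f)$ as the non-vanishing of an extended-type deformation ring, and then to use Theorem \ref{mainQp} together with Lemma \ref{multQp} to control that non-vanishing.

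First I would set up the local situation. For $f\in S_k(\Gamma_0(p))$ a newform, $\rho_f|_{G_{\Q_p}}$ is semi-stable non-crystalline with Hodge-Tate weights $(0,k-1)$, so it has trivial inertial type $\t=\triv$ with $N\neq 0$, Hodge-Tate type $w=(k-2,0)$, and a discrete series extended type $\t^+$ of the form $\scal$ determined by the Frobenius eigenvalue on $\WD(\rho_f|_{G_{\Q_p}})$. Using the normalizations of $\LL_D$ and the rule $\t^+=\LL_D(\sigmaG(\triv))\otimes\unr(a p^{|w|})^{-1}$ from Theorem \ref{mainQp}, with $a$ lifting $\alpha=x^{-1}$ and $|w|=k-2$, one translates the two choices $\sigmaG(\triv)\in\{\triv,\xi\}$ into the two possible signs of $\bar{a_p(f)/p^{k/2-1}}$. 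This bookkeeping pins down the factor $(-1)^{k/2-1}$ of part (1).

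Next, the existence of a newform $g$ congruent to $f$ modulo $p$ with $a_p(g)=-a_p(f)$ is equivalent, via Lemma \ref{eigenvalueexttype}, to the existence of a $\bar\Q_p$-point of a global deformation ring whose local component at $p$ has extended type $\t^-$ conjugate to $\t^+$. In one direction (used for part (1)) this only requires that such a point produce a $\bar\Q_p$-point of $R^{\square,\psi}(w,\t^-,\bar r_{f,p})$, hence $e(R^{\square,\psi}(w,\t^-,\bar r_{f,p})/\pi)>0$. For the converse (used for part (2)), the absolute irreducibility of $\bar r_f|_{G_{\Q(\zeta_p)}}$ permits the patching construction of Section \ref{patching} and the modularity lifting theorem of Kisin to produce a modular lift realizing any non-vanishing component, after which level-lowering ensures the corresponding automorphic form comes from a newform in $S_k(\Gamma_0(p))$.

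The core computation then applies Theorem \ref{mainQp}: one has
\[
e(R^{\square,\psi}(w,\t^-,\bar r_{f,p})/\pi)=\mu_{\bar r_{f,p}}\bigl([\overline{\sigmaG(\triv)}\otimes\bar\sigma_w]\bigr),
\]
where $\sigma_w=\Sym^{k-2}$ and $\sigmaG(\triv)$ is the character of $\G$ (either $\triv$ or $\xi$) attached to $\t^-$. Lemma \ref{decsym} expands $[\overline{\Sym^{k-2}\F^2}]$ in $\R(\Gamma)$: the dimension-one contribution is exactly $[\delta_{k/2-1}]$, and all remaining constituents are $2$-dimensional representations $r_a$ satisfying $\xi r_a=r_a$. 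For part (1), Lemma \ref{multQp} in the très ramifié case gives $\mu_{\bar r_{f,p}}$ supported only on $\xi^{k/2-1}\delta_{k/2-1}$; since $k\leq 2p+2$ ensures no wrap-around in Lemma \ref{decsym}, exactly one of the two twists of $\sigmaG(\triv)$ produces a tensor product containing $\xi^{k/2-1}\delta_{k/2-1}$, so exactly one of the two multiplicities vanishes. As $\rho_f$ itself provides a point of the $\t^+$ ring, it is the $\t^-$ ring that vanishes, yielding both the non-existence of $g$ and the sign relation. For part (2), either Lemma \ref{multQp} supplies a $2$-dimensional $r_{n(p+1)+p-1}$ on which $\mu_{\bar r_{f,p}}$ is non-zero (and which, being fixed by $\xi$, appears in both tensor products), or else $k>2p+2$ forces wrap-around in Lemma \ref{decsym} so that both $\delta_{k/2-1}$ and $\xi\delta_{k/2-1}$ occur in $\bar\sigma_w$. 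Either way $e(R^{\square,\psi}(w,\t^-,\bar r_{f,p})/\pi)>0$, and the modularity lifting step produces $g$.

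The main obstacle I expect is the careful verification of the precise sign in part (1): because $\sqrt{p}^{\,|w|}$, the parity of $k/2-1$, and the normalizations of $\LL_D$ all enter, it requires a somewhat delicate matching between the Frobenius eigenvalue of $\WD(\rho_f|_{G_{\Q_p}})$ (which encodes $a_p(f)$ up to the Hodge--Tate twist) and the labeling of $\t^+$ versus $\t^-$ through the choice of $\sigmaG(\triv)$. The multiplicity computations themselves, once one has Lemmas \ref{multQp} and \ref{decsym}, are largely mechanical.
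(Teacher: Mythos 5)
Your overall strategy is the one the paper uses: for part (1) you let the existence of $f$ itself force the non-vanishing of the extended-type ring attached to $\t_f$, then use Theorem \ref{mainQp} together with Lemmas \ref{decsym} and \ref{multQp} to see that (for $*$ tr\`es ramifi\'e and $k\leq 2p+2$) only one $\xi$-parity of $\sigmaG(\triv)$ can give a non-zero multiplicity, which pins down the sign; for part (2) you show both conjugate extended-type rings are non-zero and then realize the missing component by a modular form on the quaternion algebra ramified at $p$ and $\infty$. This matches the paper, where part (2) is carried out via the patched module of Section \ref{patching}: Proposition \ref{equality} forces the support of $M_0[1/p]$ to meet every irreducible component of $\spec R_p[1/p]$, and Jacquet--Langlands transports the resulting eigenform $g'$ back to a newform in $S_k(\Gamma_0(p))$ (no level-lowering is involved: the deformations are unramified outside $p$ and special at $p$, so the associated newform automatically has conductor $p$).

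There is, however, a genuine gap in your case analysis for part (2). Your dichotomy --- ``either Lemma \ref{multQp} supplies a $2$-dimensional weight with $\mu_{\bar r_{f,p}}\neq 0$, or $k>2p+2$ forces wrap-around in Lemma \ref{decsym}'' --- only covers reductions $\bar r_{f,p}$ whose restriction to inertia has the shape $\smat{\omega}{*}{0}{1}\otimes\omega^{n}$, since Lemma \ref{multQp} computes $\mu_{\bar\rho}$ only for such $\bar\rho$. But the hypothesis of part (2) also includes, for instance, all $f$ with $\bar r_{f,p}$ irreducible (and $k\le 2p+2$), where neither branch of your dichotomy applies. For these cases the needed input is Corollary \ref{samemult} (equivalently Proposition \ref{samevaluemu}): when $\bar\rho$ is not a twist of an extension of the trivial character by the cyclotomic character, the two conjugate extended-type rings have equal Hilbert--Samuel multiplicity, so the non-vanishing of the $\t_f$-ring coming from the existence of $f$ already forces the non-vanishing of the conjugate one. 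This is exactly how the paper argues (``Corollary \ref{samemult} or the computations of Lemma \ref{multQp}''), and with that ingredient added your argument is complete; the rest of your outline, including the delicate sign bookkeeping in part (1), agrees with the paper's proof.
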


\begin{proof}
Let $u_p(f) = a_p(f)p^{1-k/2}$. By the results of \cite{Sai}, $\rho_f$ is
a semi-stable, non-crystalline representation with extended type 
$\t_f =
(\|\cdot\|^{k/2-1}\oplus\|\cdot\|^{k/2})\otimes\unr(u_p(f)^{-1}) =
(1\oplus\|\cdot\|)\otimes\unr(u_p(f)^{-1}p^{1-k/2})$. 
Let $w_k$ be the Hodge-Tate type $(k-2,0)$ and $\psi_k = \eps^{k-2}$.

Suppose first that 
$r_{f,p}$ is of the form $\mat \omega{*}01\otimes
\omega^{k/2-1}\otimes\unr(x)$ with $*$ très ramifié.
By the existence of $f$,
$R^{\square,\psi_k}(w_k,\t_f,r_{f,p})$ is non-zero. 
With the normalization $\varpi_{\Q_p} = p$ as before, there is a choice of
$i\in\Z/2\Z$ with $\sigmaG(\triv) = \xi^i$ such that $\t_f =
(1\oplus\|\cdot\|)\otimes\unr((-1)^i)\otimes\unr(y^{-1}p^{1-k/2})$
for some $y$ lifting $x^{-1}$, and then
$e(R^{\square,\psi_k}(w_k,\t_f,r_{f,p})/\pi) =
\mu_{r_{f,p}}([\xi^i\sigma_{w_k}])$. As $k \leq 2p+2$, by Lemma
\ref{decsym} and Lemma \ref{multQp} this can be non-zero only if
$i=k/2-1$, that is $y = (-1)^{k/2-1}u_p(f)$,
which gives the result (note that we could apply the same method for $f
\in S_k(\Gamma_0(Np))$ new at $p$ for any $N$ such that $p \nmid N$).

Suppose now that either $r_{f,p}|_{I_p}$ is not of the form $\mat
\omega{*}01\otimes \omega^{k/2-1}$ with $*$ très ramifié or that $k > 2p+2$.
By the existence of $f$,
$R^{\square,\psi_k}(w_k,\t_f,r_{f,p})$ is non-zero
and then
Corollary \ref{samemult} or the computations of Lemma \ref{multQp}
show that both
$R^{\square,\psi_k}(w_k,\t_f,r_{f,p})$ and
$R^{\square,\psi_k}(w_k,\t_f',r_{f,p})$ are non-zero when $k>2$,
where $\t_f'$ is the extended type conjugate to $\t_f$.

Suppose now that moreover $r_f|_{G_{\Q(\zeta_p)}}$ is absolutely
irreducible.
Let $B$ be the quaternion algebra over $\Q$ that is ramified exactly
at $p$ and $\infty$. There exists
a modular form $f'$ on $B$
such that the automorphic representations attached to $f$ and $f'$
correspond to each other via Jacquet-Langlands,
and more precisely we can take for $f'$
an eigenform in
$S_{\sigma,\eta}(U,\O)$ for $\sigma = \sigma_{alg} = \Sym^{k-2}\O^2$ and
some character $\eta$ that restricts to $\psi_k$ at $p$,
and $U$ as in Paragraph \ref{residualrepr}. Then we are
in the situation of Paragraph \ref{patching}, 
from which we retain the notations.
In particular
Proposition \ref{equality} holds, hence the module $M_0[1/p]$
meets each
irreducible component of $\spec R_{p}[1/p]$. As $\spec R_p[1/p]$ has
irreducible components of both possible extended types, 
there exists an eigenform $g'$ in $S_{\sigma,\eta}(U,\O)$ such that the
Galois representation attached to $g'$ has an extended type at $p$ which
is conjugate to that of $r_f$. Let $g\in S_k(\Gamma_0(p))$ be an
eigenform such that the automorphic representations attached to $g$ and
$g'$ correspond via Jacquet-Langlands, then $g$ is the form we were
looking for.
\end{proof}

The first part of Theorem \ref{congruences} can be seen as a
generalization of Conjecture $4$ of \cite{CS} which was proved in
\cite{AB} (see also \cite{BP}).

\end{document}